\numberwithin{theorem}{section}
\numberwithin{equation}{section}
\renewcommand{\cases}[1]{\left\{ \begin{array}{rl} #1 \end{array} \right.}
\newcommand{\smfrac}[2]{{\textstyle \frac{#1}{#2}}}
\def\R{\mathbb{R}}
\def\C{\mathbb{C}}
\def\Z{\mathbb{Z}}
\def\N{\mathbb{N}}
\def\CC{\mathrm{C}}
\def\<{\langle}
\def\>{\rangle}
\def\wto{\rightharpoonup}
\def\b{\big}
\def\B{\Big}
\def\bg{\bigg}
\def\Bg{\Bigg}
\def\hop{\mathrm{hop}}
\def\arcsinh{\mathrm{arcsinh}}
\def\Us{\mathscr{W}}
\def\Usz{\mathscr{W}_0}
\def\Hsi{\dot{\mathscr{W}}^{1,2}}
\def\Rg{\mathcal{R}}
\def\mB{{\sf B}}
\def\mR{{\sf R}}
\def\sep{\,|\,}
\def\bsep{\,\b|\,}
\def\Bsep{\,\B|\,}
\def\D{\nabla}
\def\del{\delta}
\def\ddel{\delta^2}
\def\dt{\,{\rm d}t}
\def\ds{\,{\rm d}s}
\def\Brav{\mathcal{L}}
\def\L{\Lambda}
\def\yh{\hat{y}}
\def\alh{\hat{\alpha}}
\def\Bonds{\mathcal{B}}
\def\Cells{\mathcal{C}}
\def\Cores{\mathcal{C}^\pm}
\def\Coresp{\mathcal{C}^+}
\def\Coresm{\mathcal{C}^-}
\def\E{\mathcal{E}}
\def\eps{\epsilon}
\def\conv{{\rm conv}}
\def\supp{{\rm supp}}
\renewcommand{\d}[1]{d_{#1}}
\def\psilin{\psi_{\mathrm{lin}}}
\def\DMCP{{\bf DMCP}}
\def\DOCP{{\bf DOCP}}
\begin{document}

\title[Screw Dislocation under Anti-Plane Deformation]{Existence and Stability 
of a Screw Dislocation under Anti-Plane Deformation}

\author{T. Hudson}
\address{T. Hudson \\ Mathematical Institute \\ University of Oxford \\
  Oxford OX1 3LB \\ UK}
\email{thomas.hudson@maths.ox.ac.uk}

\author{C. Ortner}
\address{C. Ortner\\ Mathematics Institute \\ Zeeman Building \\
  University of Warwick \\ Coventry CV4 7AL \\ UK}
\email{c.ortner@warwick.ac.uk}

\date{\today}

\thanks{CO was supported by the EPSRC grant EP/H003096 ``Analysis of
  atomistic-to-continuum coupling methods''. TH was supported by the
  UK EPSRC Science and Innovation award to the Oxford Centre for
  Nonlinear PDE (EP/E035027/1).}

\subjclass[2000]{74G25, 74G65, 70C20, 49J45, 74M25, 74E15}

\keywords{Screw dislocations, anti-plane shear, lattice models, concentration compactness}

\begin{abstract}
  We formulate a variational model for a geometrically necessary screw
  dislocation in an anti-plane lattice model at zero
  temperature. Invariance of the energy functional under lattice
  symmetries renders the problem non-coercive. Nevertheless, by
  establishing coercivity with respect to the elastic strain and a
  concentration compactness principle, we prove existence of a global
  energy minimiser and thus demonstrate that
  dislocations are globally stable equilibria within our model.
\end{abstract}


\maketitle

\section{Introduction}
Dislocations are line defects in crystalline solids which can be
described by discontinuous displacements of a homogeneous crystal: to
obtain the simplest forms of dislocations, the lattice is sliced along
some half plane (the Volterra cut), and then deformed so that the
lattice remains almost perfect away from the edge of that half plane
(the dislocation line) \cite{HirthLothe, Volterra07}.  Since
dislocations are the principal carriers of plastic flow in crystals
\cite{Orowan34,Polanyi34,Taylor34}, they are among the most widely
studied objects of materials science. They have been investigated
analytically as points in an elastic continuum \cite{HirthLothe,
  CermLeoni05, SZ10, GLP10,DislDyn,BulatovCai}, as crystal defects
using molecular simulation techniques
\cite{BulatovCai,Sinclair:1971,ShilkrotMillerCurtin2004}, or through a
variety of intermediate models such as phase field descriptions
\cite{KCO02, RLBF03, GM06}. We refer to \cite{Foll13, HirthLothe,
  HullBacon11,BulatovCai} for introductions to these various models.


In the present work, we focus on the atomistic structure of
dislocations. Precisely, we shall demonstrate that they can be
understood as global minima in a variational problem. To the
best of our knowledge, our results are the first that establish the
existence of dislocations as stable equilibrium configurations of an
atomistic energy.

Our work is motivated by ongoing efforts to develop multi-scale models
of dislocations such as dislocation dynamics \cite{BulatovCai}, and
far-field coarse-graining techniques such as quasicontinuum and
related methods \cite{Miller:2008, ShilkrotMillerCurtin2004,
  Sinclair1971}. Our results contribute to a precise qualitative
understanding of the atomistic structure of dislocations, which can be
used to inform the formulation and analysis of such multi-scale
schemes \cite{2012-MATHCOMP-qce.pair,bqce.alg}.

Further, our work is inspired by a recent effort to place the theory
of dislocations on a rigorous mathematical foundation, in particular
clarifying the connections between the various models mentioned above
\cite{GPPS12,VCOA07,Ponsiglione07,GM05}. We outline only a small
fraction of the contributions here, most closely related to our own
work. We believe that, to some extent, our results help to
  overcome simplifying assumptions made in many of these works.

Possibly the most complete analysis of a static model of
dislocations is provided in \cite{GM05, GM06, CGM11}. This series of
papers studies a continuum phase field model for dislocations in a
periodic environment of pinning sites
first described in \cite{KCO02}. The authors obtain a variety of scaling
regimes, depending upon the number of the pinning sites relative to
their size in terms of the interatomic spacing.

A mathematically consistent description of dislocations in an
atomistic setting, using the language of algebraic topology is given
in \cite{ArizaOrtiz05}. The concepts and language developed therein are
used to derive models of the elastic energy of a
dislocation configuration. A rigorous discrete-to-continuum passage
within this framework is established in \cite{Ponsiglione07}
using the language of $\Gamma$-convergence.
%
%
Related works analysing dislocations and other similar defects in discrete
systems are \cite{AlicCic09,ACP11}.

The works cited above concerning discrete and semidiscrete models of
screw dislocations are primarily concerned with asymptotics of the elastic
stored energy, given a number of prescribed dislocation cores. The creation
or destruction of additional cores, for example via the introduction of
dipoles, is either forbidden or explicitly tracked in the energy functional
through a term accounting for a {\em positive} core energy.

In contrast, our model allows for the creation and destruction of
dislocation dipoles without any such penalty, accounting
only for the stored elastic energy. More precisely, we consider a static
atomistic model for screw dislocations in a similar vein to
\cite{Ponsiglione07}, and show that {\em unconstrained} stable
equilibrium configurations containing dislocations exist. Although we
do not pursue this in the present work, we observe that the analytic
properties of the equilibria we obtain should allow for a natural
extension of the results in \cite{Ponsiglione07} to our unconstrained
model.

\subsection{Outline}
We consider anti-plane displacements of a BCC crystal, in the
direction of a screw dislocation line (that is, parallel to the
Burgers vector). For two displacements, $y, \tilde{y}$ we consider the
energy difference
\begin{equation*}
  E(y;\tilde{y}):=\sum_{b\in\Bonds} \b[\psi(D\tilde{y}_b)-\psi(Dy_b)\b],
\end{equation*}
where $\Bonds$ is a set of pairs of interacting (lines of) atoms, $Dy_b$
denotes a finite difference, and $\psi$ is a potential describing this
interaction. The potential $\psi$ is $1$-periodic, where $1$ is the
atomic spacing, mimicking the behaviour of realistic pair interaction
potentials. In particular, if
$y(\xi) - \tilde{y}(\xi) \in \Z$ for all lattice sites $\xi$, then
$E(y; \tilde{y}) = 0$. This invariance of the energy is the primary
source of analytical difficulties.

We call a deformation $y$ which minimises $E(y+u;y)$ amongst all
finite energy perturbations $u$ a {\em globally stable equilibrium}.

Building on \cite{ArizaOrtiz05} and the
differential displacement maps first employed in \cite{VPB70},
in \S \ref{sec:bonds} we
present a method by which we can identify dislocation cores and assign
them a corresponding Burgers vector.

Our main result, Theorem \ref{th:ex_nec_disl}, states that there
exists a globally stable equilibrium containing 
one geometrically necessary dislocation core.

The proof of this result is developed throughout the remainder of the
paper. In \S \ref{sec:Ediff} we show that $\E(u) := E(\yh+u; \yh)$,
where $\yh$ is the linearised elasticity displacement field, is
well-defined on a discrete $H^1$-function space describing finite
energy states. In Theorem \ref{th:minim_E} we reformulate and refine
Theorem \ref{th:ex_nec_disl} as a variational problem, by stating the
existence of a global minimiser of~$\E$ in that space.
\S \ref{sec:mainproof} is then devoted to the proof of Theorem
\ref{th:minim_E}.

Further, in \S \ref{sec:main_result} we discuss briefly how to expoit
the global stability result of Theorem \ref{th:minim_E} to construct
{\em locally stable} configurations containing finitely many
dislocation cores or configurations with dislocations in a domain with
boundary.

\section{Geometric and Topological Preliminaries}


\subsection{Anti-plane displacements of the BCC lattice}
\label{subsec:lattices}

Although our analysis can be applied in other situations, it
will be notationally convenient and physically relevant to restrict
our attention to the body-centred cubic (BCC) lattice, which may be
defined by
\begin{displaymath}
  \Brav:=\mB\Z^3 \qquad \text{where} \qquad \mB := s \left[ \begin{array}{ccc}
      \sqrt{8/9} & \sqrt{2/9} & 0 \\
      0 & \sqrt{2/3} & 0 \\
      1/3 & -1/3 & 1
    \end{array} \right],
\end{displaymath}
and $s > 0$ is a scaling factor that we leave undefined for now.

To define anti-plane displacements, we fix the lattice vector $\nu :=
[0, 0, s]$ and define the projection
\begin{align*}
  \Pi_\nu:=\mathsf{I}-\smfrac{\nu}{|\nu|} \otimes \smfrac{\nu}{|\nu|},
\end{align*}
which `flattens' the Bravais lattice $\Brav$ onto the lattice plane
with normal $\nu$; see Figure \ref{fig:bravlatt}. It is
straightforward to check that the set $\Pi_\nu (\Brav)$ is a
two-dimensional triangular lattice embedded in $\R^3$, with lattice
constant $s\sqrt{8/9}$; we will choose $s=\sqrt{9/8}$ so that the
planar lattice constant is $1$. We further shift the origin so that
the projected set may be identified with
\begin{align*}
  \L:= \b(\smfrac12,\smfrac{\sqrt{3}}6\b)^T + [\,a_1, a_2 ] \cdot \Z^2, \quad
  \text{where } a_1 = (1,0)^T \text{ and } a_2 = \b(\smfrac12 ,
  \smfrac{\sqrt{3}}{2} \b)^T.
\end{align*}

An {\em anti-plane displacement} in the direction $\nu$ (or, simply,
{\em displacement}), is a map $y : \L \to \R$. The set of all
displacements is denoted by $\Us$.  A displacement $y$ gives rise to a
lattice {\em deformation} $Y : \Brav \to \R^3$,
\begin{displaymath}
  Y(\eta) := \eta + y\b( (\smfrac12, \smfrac{\sqrt{3}}{6})^T  +  \Pi_\nu \eta \b)
  \nu,  \qquad \eta \in \Brav.
\end{displaymath}

Let $y, \tilde{y}$ be displacements and $Y, \tilde{Y}$ the associated
deformations. We say that $y, \tilde{y}$ are equivalent if $Y(\Brav) =
\tilde{Y}(\Brav)$ (i.e., they describe the same atomistic
configurations). It is easy to see that $y, \tilde{y}$ are equivalent
if and only if $(y - \tilde y)(\L) \subseteq \Z$.


\subsection{Bonds and bond lengths}
\label{sec:bonds}
Each $\xi \in \L$ has six nearest neighbours, $\xi+a_i$, $i = 1,
\dots, 6$, where $a_1 = (1, 0)^T$ and $a_i = \mR_6^{i-1} a_1$ where
$\mR_6$ denotes a rotation through angle $\pi/3$.  At a point $\xi\in\L$,
we define the set of outward-pointing nearest neighbour bonds
\begin{equation*}
  \Rg_\xi:=\b\{(\xi, \xi+a_i) \bsep i = 1, \dots, 6 \b\}=\b\{(\xi,\eta)\bsep \eta\in\L,
    |\xi-\eta|=1\b\},
\end{equation*}
and furthermore define the set of all bonds to be the union
\begin{align*}
  \Bonds:= \bigcup_{\xi\in\L}\Rg_\xi= \b\{(\xi, \xi+a_i) \bsep
    \xi \in \L, i = 1, \dots, 6 \b\} =\b\{(\xi, \eta)\in\L^2 \bsep
    |\xi-\eta|= 1\b\}.
\end{align*}
For any bond $b=(\xi, \xi+a_i)$, we denote the reverse bond by $-b :=
(\xi+a_i, \xi)$. 

For $b = (\xi,\eta) \in \Bonds$ we define the difference operator 
\begin{align*}
  Dy_b := y(\eta)-y(\xi).
\end{align*}
Moreover, we set $Dy := (Dy_b)_{b \in \Bonds}$. We also note that
$Dy_{-b}=-Dy_b$.

With this notation, we can now define two important discrete function
spaces: fixing a reference lattice point $\xi_0 =(0,\frac{\sqrt{3}}{3})^T$,
\begin{align*}
  \Usz &:= \b\{ v \in \Us \bsep v(\xi_0) = 0 \text{ and } {\rm supp}(Dv) \text{ is bounded}
  \b\}, \qquad \text{and}  \\
  \Hsi &:= \b\{ v \in \Us \bsep v(\xi_0) = 0 \text{ and } Dv \in
  \ell^2(\Bonds) \b\}.
\end{align*}
It is shown in \cite[Prop. 9]{OrtSha:interp:2012} that $\Hsi$ is a Hilbert
space and $\Usz \subset \Hsi$ is dense.


We now introduce a crucial concept required to define the notation of
dislocation. We denote the set of {\em bond length 1-forms} by
\begin{equation}
  \label{eq:defn:[Dy]}
  [Dy] := \b\{\alpha : \Bonds \to [-1/2, 1/2] \bsep  \alpha_{-b} =
  \alpha_b \text{ and } Dy_b - \alpha_b \in \Z \text{ for all } b \in
  \Bonds \b\}.
\end{equation}
We note that, if $Dy_b \not\in \smfrac12 + \Z$ for all $b \in \Bonds$,
then $\alpha \in [Dy]$ is unique, but in general there is ambiguity
in the definition of $\alpha$. This non-uniqueness is an issue which
we will return to in \S\ref{sec:burgers_vec}.

The motivation behind this definition is that $\alpha_b$ defines the
`shortest bond length' between the two lines of nuclei represented by
the 2D lattice sites $\xi, \xi'$, where $b = (\xi,\xi')$, in that
\begin{displaymath}
  \min_{\substack{\eta, \eta' \in \Brav \\
      \Pi_\nu \eta=\xi, \Pi_\nu \eta' = \xi'}} \b|y(\eta) - y(\eta')\b| =   \sqrt{1 + \alpha_b^2};
\end{displaymath}
see also Figure \ref{fig:3Dsideview}.

\begin{figure}
  \subfigure[Part of $\Brav$, showing the BCC unit cell,
    $\nu$ and the plane perpendicular to it.]{
    \includegraphics[width=0.45\textwidth]{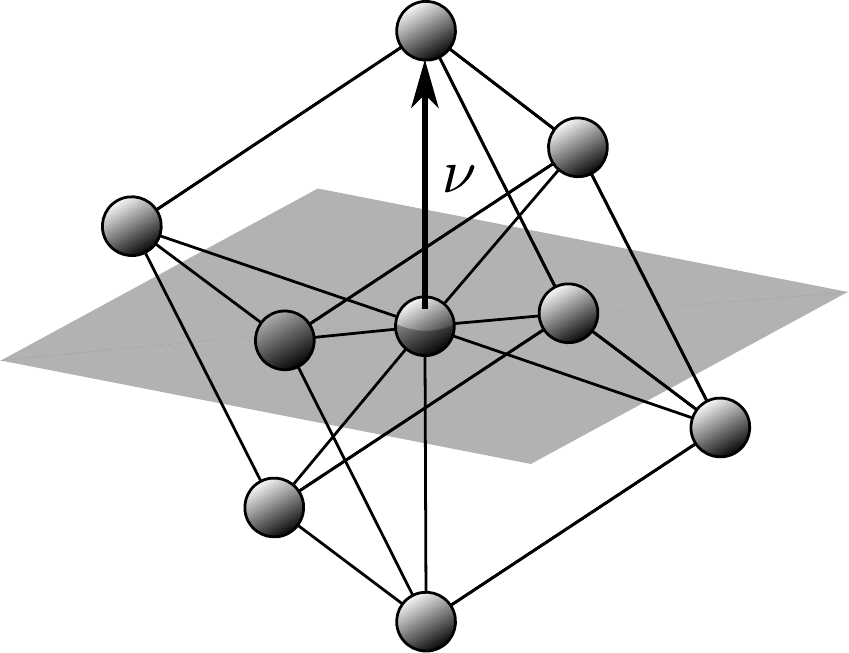}
    \label{fig:bravlatt}
  } 
  \hspace{0.03\textwidth}
  \subfigure[An illustration of the definition of $\alpha_b$ and its
  relationship to the shortest distance between atoms.]{ 
    \includegraphics[width=0.45\textwidth]{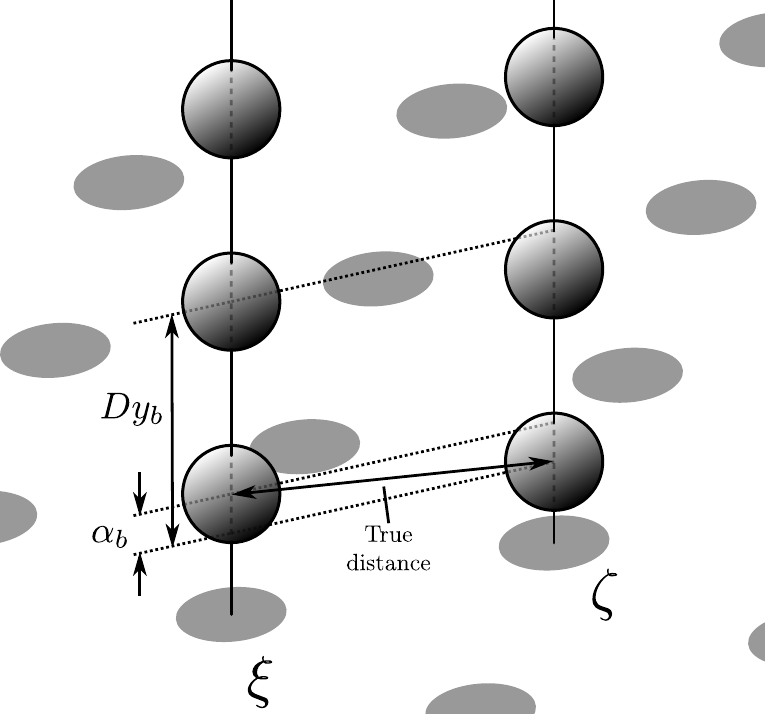}
    \label{fig:3Dsideview}
  }
  \caption{Illustrations of the lattice geometry.}
\end{figure}

The importance of the concept of bond length stems from the fact that,
due to the invariance of the lattice under adding integer shifts to a
displacement, the energy of the lattice can only depend on $\alpha_b$,
but not on $Dy_b$ directly.

\subsection{The lattice complex}
\label{sec:latt.complex}
In this section, we review some terminology of discrete algebraic topology
which is convenient for our analysis. We follow the language described
in \cite{ArizaOrtiz05}, where more details and applications to the
study of dislocations can be found.

Repeating the definitions of $\L, \Bonds$, we define a lattice complex
as in \cite[\S 2.3.3]{ArizaOrtiz05}, with
\begin{align*}
  \L &:= \b\{\xi\in\R^2\bsep\xi\in\Pi_\nu\Brav + (\smfrac12,
  \smfrac{\sqrt{3}}{6})^T \b\},\\
  \Bonds &:= \b\{(\xi,\zeta)\in\L^2\bsep |\xi-\zeta|=1\b\}, \quad
  \text{and} \\
  \Cells &:= \b\{(\xi,\zeta,\eta)\in\L^3\bsep (\xi,\zeta),
  (\zeta,\eta),(\eta,\xi)\in\Bonds\},
\end{align*}
denoting, respectively, the sets of 0-cells, 1-cells and
2-cells of the lattice complex respectively (see Figure
\ref{fig:cells} for an illustration). From now on, we will not
explicitly use the terms $p$-cell, $p$-chain and $p$-cochain as
defined in \cite[\S 2.2]{ArizaOrtiz05}, preferring instead the more
evocative terminology `lattice points' for elements of $\L$, `bonds'
for elements of $\Bonds$, and `cells' for elements of $\Cells$. We
note the additive structure that may be defined on these objects, and
write $a\in A$ to mean that $a$ is an elementary $p$-cell contained in
the $p$-chain $A$. We also frequently use the boundary operator
$\partial$, which maps $p$-chains to their boundaries, assigning
orientations in the usual way.

We then follow \cite[\S 3]{ArizaOrtiz05} in defining $p$-forms
and integration on the lattice, writing
\begin{equation*}
  \int_{U}F:=\sum_{e\in U} F(e),
\end{equation*}
where $U$ is a $p$-chain, $e$ are $p$-cells, and $F$ is a $p$-form (i.e. a
real-valued function on $p$-cells). We note that this definition is linear in
$F$ and $U$, in the sense that
\begin{align*}
  \int_{U+V}\lambda F+G &=\lambda \sum_{e\in U}F(e)+\lambda \sum_{e\in V}F(e)+\sum_{e\in U}G(e)+\sum_{e\in V}G(e),\\
  &=\lambda \int_UF+\int_UG+\lambda \int_VF+\int_VG,
\end{align*}
for any $\lambda\in\R$, $p$-chains $U$ and $V$, and $p$-forms $F$ and
$G$. We remark here that `bond length 1-forms' $\alpha$ as defined in
\S\ref{sec:bonds} are true 1-forms in the sense defined in
\cite[\S3.1]{ArizaOrtiz05}.

\begin{figure}
  \includegraphics[height=6cm]{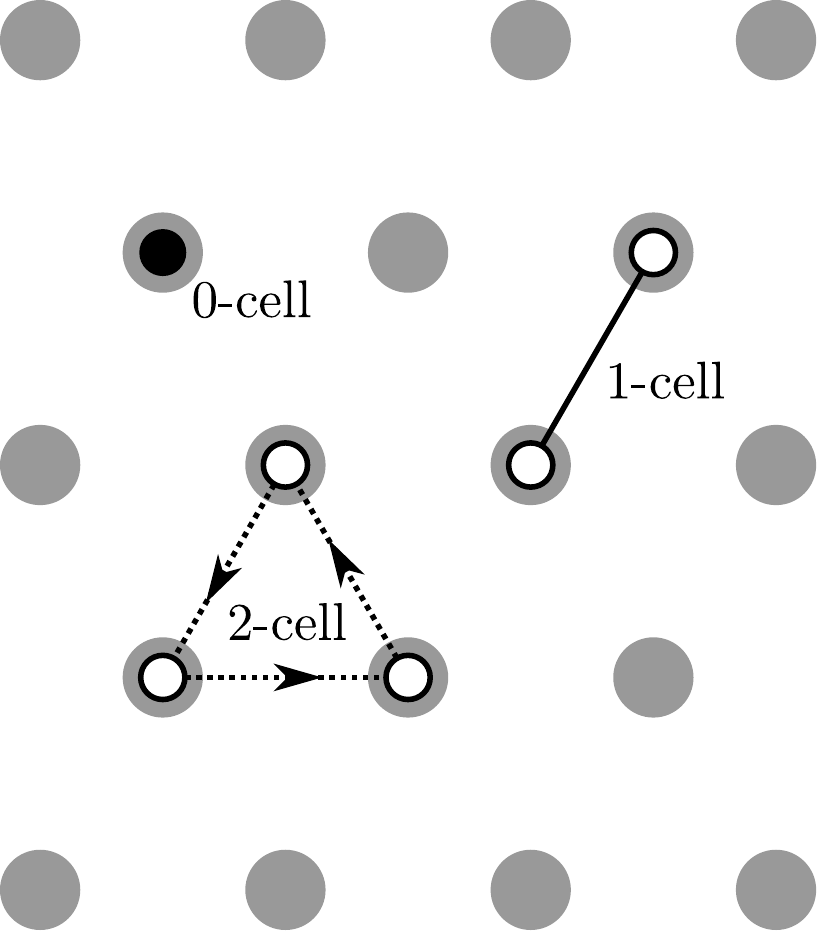}
  \caption{An illustration of 0-, 1- and 2-cells in the triangular
    lattice. The arrows show the boundary of the positively-oriented 2-cell.
    Note that orientation only makes sense for 1- and 2-cells.}
  \label{fig:cells}
\end{figure}

Finally, we define paths in the natural way as 1-chains
\begin{equation}
  \Gamma:=\sum_{k=1}^L(\xi_k,\xi_{k+1}),
  \label{eq:pathdef}
\end{equation}
where $(\xi_k,\xi_{k+1})\in\Bonds$ for each $k$, and we denote the
length of a path $\Gamma$ by $|\Gamma|:=L$.

\subsection{Measures of Lattice Distance}
\label{sec:latt.dist}
Since we will make use of more than simply the algebraic structure that a
lattice complex entails, we will occasionally abuse the notation given above by
identifying bonds and cells with their closed convex hulls; that is, we write
\begin{align*}
  x\in b&=(\xi,\zeta) \qquad\text{to mean}\quad
  x\in\conv\{\xi,\zeta\}, \qquad \text{and} \\
  x\in C&=(\xi,\zeta,\eta) \quad\text{to mean}\quad x\in\conv\{\xi,\zeta,\eta\},
\end{align*}
where $\conv(\Omega)$ denotes the closed convex hull of a set
$\Omega\subset\R^2$; it will be clear from the context whether we are
referring to spatial points or subchains.  Since we frequently
  refer to them, we define $x^C$ to be the barycentre of a
  cell $C$, and $C_0$ the cell for which $x^{C_0}=0$.

Using this form of the notation, we define the distance from each kind of cell to
the origin as
\begin{align*}
  \d{\xi}&:=|\xi|,\\
  \d{b}&:=\inf_{x\in b}|x|,\\
  \d{C}&:=\inf_{x\in C}|x|,
\end{align*}
which corresponds to the usual notion of distance between sets in
Euclidean space.

The second notion of distance we will use is the graph theoretic
notion.  Since $\L$ can be identified with a planar graph with edges
$b\in\Bonds$, we can further identify cells with nodes in the dual
graph, and bonds as edges in this graph (see
\cite[\S4.6]{Diestel10}). This allows us to define the {\em hopping
  distance}, $\hop_2(C,C')$ as the length of the shortest path in the
dual graph between the cells $C,C'\in\Cells$, as in
\cite[\S1.3]{Diestel10}. We note that since the dual graph is
connected, this distance is always finite, and we have the following
`triangle inequality' for any dual lattice points $A$, $B$ and $C$:
\begin{equation}
  \hop_2(A,C)\leq \hop_2(A,B)+\hop_2(B,C).\label{eq:hop_tri_ineq}
\end{equation}

\subsection{The Burgers vector}
\label{sec:burgers_vec}
We now define the notion of Burgers vector we use in our model, which
is a fundamental geometric concept describing the nature of a
dislocation \cite{HirthLothe}.

We call a path $\Gamma=\sum_{k=1}^L(\xi_k,\xi_{k+1})$ as defined in
\eqref{eq:pathdef} a {\em Burgers loop} (or, simply, {\em loop}) if
$\xi_1=\xi_{L+1}$ (or equivalently $\partial_1\Gamma=0$; this implies
$\Gamma=\partial_2 A$ for some sum of cells $A$, since the
lattice complex is perfect \cite[\S 2.2.1, Axiom (A4)]{ArizaOrtiz05}).
If $\Gamma$ is a loop, $y\in\Us,$ and $\alpha \in [Dy]$ an associated
bondlength 1-form, then
\begin{align*}
  \int_\Gamma \alpha = \sum_{b\in\Gamma} Dy_b+ \sum_{b
    \in\Gamma} (\alpha_b - Dy_b) = 0 + N \in \Z.
\end{align*}
We call the integer $N$ the Burgers vector of the bond length 1-form
$\alpha$ around the loop $\Gamma$.


\begin{definition}[Dislocation Core]
  \label{def:core}
  A dislocation core of a bond length 1-form $\alpha$ is a positively oriented
  2-cell $C$ such that $\int_{\partial C} \alpha \neq 0$.
\end{definition}
\medskip

We will refer to cores as being `contained in' $\alpha$. For future reference, we
remark that
\begin{equation}
  \label{eq:bvec_cell_leq_1}
  \bg|\int_{\partial C}\alpha\,\bg| \leq \frac{|\partial C|}{2} = \frac{3}{2},
\end{equation}
that is, the Burgers vector around a single 2-cell can only be $-1, 0$ or
$1$, and hence we define the sets of dislocation cores
\begin{align*}
  \Coresp[\alpha]&:=\bg\{ C\in\Cells\Bsep C\text{ positively oriented, }\int_{\partial C}\alpha=+1\bg\},\\
  \Coresm[\alpha]&:=\bg\{ C\in\Cells\Bsep C\text{ positively oriented, }\int_{\partial C}\alpha=-1\bg\},\\
  \Cores[\alpha]&:=\Coresp[\alpha]\cup\Coresm[\alpha].
\end{align*}

\begin{remark}
\label{rem:alpha_ambig}
It is interesting to note that if $\alpha, \alpha' \in [Dy]$, then
they need not have the same number of cores; see Figure \ref{fig:bnd.len.ex} for
an illustration of this fact.

The only point at which this ambiguity is an issue is if $\alpha$
has $C,C'\in\Cores[\alpha]$ which are adjacent. In that case, it may be
checked that the $b\in\partial C$ such that $-b\in\partial C'$ must
have $\alpha_b\in\{-1/2,0,1/2\}$. In the case where $\alpha_b=\pm1/2$,
redefining $\alpha_b=\mp1/2$ removes these cores, and $\alpha$ remains
a bond length 1-form in $[Dy]$, so we will always assume that
minimising sequences have $\alpha_b=0$ for any bond $b$ shared by 2
adjacent cores.
\end{remark}
\medskip

\begin{figure}
  \includegraphics[height=2.5cm]{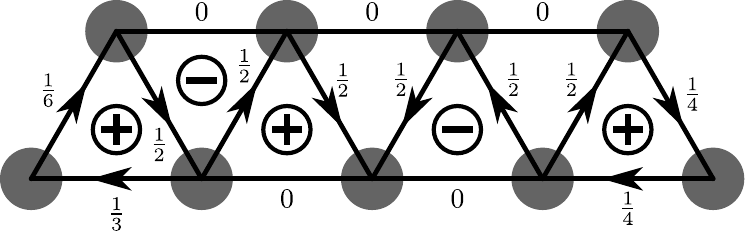}
  
  \medskip

  \includegraphics[height=2.5cm]{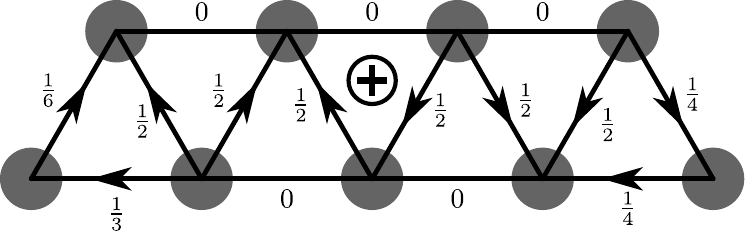}
  \caption{Two examples of bond length 1-forms corresponding to the
  same deformation. The numbers are the value of the 1-form on the
  relevant bond, and arrows indicate the bond direction in which
  it is positive. Note the number and positions of the dislocation
  cores present change, but the sum of the Burgers vectors does not.}
  \label{fig:bnd.len.ex}
\end{figure}

The {\em net Burgers vector} is obtained by summing the signs of the
cores, or equivalently, by computing the Burgers vector on a
sufficiently large loop enclosing all cores. Since $\alpha \in [Dy]$
is not necessarily unique for a given $y$, we ensure that such a concept
can be defined unambiguously.

For our purposes it will be enough to consider displacements with some
prescribed far-field behaviour.

\begin{proposition}
  Let $y \in \Us$ and $\alpha \in [Dy]$ such that $\alpha_b \to 0$ as
  $\d{b}\to\infty$. Then, for any $\alpha' \in [Dy]$,
  $\Cores[\alpha']$ is finite and
  \begin{equation}
    \label{eq:B[y]_ind_of_alpha}
    \sum_{C\in\Cores[\alpha']}\int_{\partial C}\alpha' =
    \sum_{C\in\Cores[\alpha]}\int_{\partial C}\alpha =
    \int_{\Gamma} \alpha,
  \end{equation}
  where $\Gamma$ is any loop that encloses all cores
  in $\alpha$.
\end{proposition}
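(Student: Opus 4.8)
The plan is to reduce the statement to a comparison with the distinguished form $\alpha$ and then to invoke a discrete Stokes identity. \emph{First}, I would show that any two admissible $1$-forms agree outside a bounded region. Given $\alpha' \in [Dy]$, set $\beta := \alpha' - \alpha$. Since each of $\alpha_b,\alpha'_b$ differs from $Dy_b$ by an integer, $\beta$ is integer-valued; and since $\alpha_b,\alpha'_b \in [-\frac12,\frac12]$ we get $\beta_b \in \{-1,0,1\}$, with $\beta_b \neq 0$ only when $|\alpha_b| = \frac12$. As $\alpha_b \to 0$ when $\d{b}\to\infty$, there is $R > 0$ with $|\alpha_b| < \frac12$ whenever $\d{b} \geq R$, whence $\alpha'_b = \alpha_b$ for all such $b$. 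In particular $\alpha'_b \to 0$ as $\d{b}\to\infty$ as well, and $\{b : \alpha'_b \neq \alpha_b\}$ is bounded.

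\emph{Next} I would establish finiteness of cores. For any bond length $1$-form $\gamma$ with $\gamma_b \to 0$ as $\d{b}\to\infty$ (such as $\gamma = \alpha$ or $\gamma = \alpha'$), pick $R$ with $|\gamma_b| < \frac13$ whenever $\d{b} \geq R$. Since $\d{b} \geq \d{C}$ for every $b \in \partial C$, any cell with $\d{C} \geq R$ satisfies $|\int_{\partial C}\gamma| \leq \sum_{b\in\partial C}|\gamma_b| < 1$; being the Burgers vector of $\gamma$ around the loop $\partial C$, hence an integer, this must vanish. Thus $\Cores[\gamma] \subseteq \{C : \d{C} < R\}$, a finite set, which gives finiteness of $\Cores[\alpha']$ (and of $\Cores[\alpha]$).

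\emph{Then} I would prove the right-hand identity in \eqref{eq:B[y]_ind_of_alpha}. Let $\Gamma$ be any loop enclosing all cores of $\alpha$. Since $\partial_1\Gamma = 0$, perfectness of the lattice complex (\cite[\S 2.2.1, Axiom (A4)]{ArizaOrtiz05}) provides a sum of positively oriented cells $A$ with $\partial_2 A = \Gamma$; by the planar structure $A$ may be taken to be the region enclosed by $\Gamma$, so that $\Cores[\alpha] \subseteq A$. Linearity of $\partial_2$ and of integration then yield
\begin{equation*}
  \int_\Gamma \alpha = \int_{\partial_2 A}\alpha = \sum_{C\in A}\int_{\partial C}\alpha = \sum_{C\in\Cores[\alpha]}\int_{\partial C}\alpha,
\end{equation*}
the last equality because a non-core cell contributes $0$; this also shows the value does not depend on the admissible $\Gamma$ chosen.

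\emph{Finally}, to compare $\alpha$ with $\alpha'$ I would fix a single loop $\Gamma_\ast$ enclosing all cores of $\alpha$, all cores of $\alpha'$, and the bounded set $\{b : \alpha'_b \neq \alpha_b\}$ identified in the first step. Applying the display above to both forms along $\Gamma_\ast$ and using $\alpha_b = \alpha'_b$ for every bond $b$ occurring in $\Gamma_\ast$,
\begin{equation*}
  \sum_{C\in\Cores[\alpha']}\int_{\partial C}\alpha' = \int_{\Gamma_\ast}\alpha' = \int_{\Gamma_\ast}\alpha = \sum_{C\in\Cores[\alpha]}\int_{\partial C}\alpha,
\end{equation*}
which is the remaining equality. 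The main obstacle is the first step — extracting from the bare decay $\alpha_b \to 0$ that the integer-valued correction $\beta$ has bounded support; the rest is routine bookkeeping in the discrete exterior calculus of \cite{ArizaOrtiz05}, once one has pinned down what ``enclosing all cores'' means for the bounding $2$-chain.
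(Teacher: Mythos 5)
Your proposal is correct and follows essentially the same route as the paper: decay of $\alpha$ forces $\alpha'_b=\alpha_b$ outside a bounded set, integrality of $\int_{\partial C}\gamma$ gives finiteness of the core sets, and the discrete Stokes identity $\int_\Gamma\gamma=\sum_{C\in A}\int_{\partial C}\gamma$ on a large enclosing loop yields both equalities. Your version merely makes a few steps more quantitative (the $|\gamma_b|<\tfrac13$ threshold and the explicit bounded-support argument for $\alpha'-\alpha$) than the paper's proof.
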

\begin{proof}
  If $\alpha_b\to 0$ as $\d{b}\to\infty$ then
  \begin{align*}
    \int_{\partial C} \alpha \to 0 \qquad \text{ as } \d{C}
    \to \infty.
  \end{align*}
  Since $\int_{\partial C} \alpha \in \Z$ it follows that
  $\int_{\partial C} \alpha = 0$ for $\d{C}$ sufficiently large, and
  hence the number of dislocation cores present in $\alpha$ is finite.

  Moreover, since $\alpha_b \in (-1/2, 1/2)$ for $\d{b}$ sufficiently
  large, it follows that $\alpha_b = \alpha_b'$ for all $\alpha' \in
  [Dy]$ and $\d{b}$ sufficiently large. In particular,
  $\Cores[\alpha']$ is also finite.

  To prove \eqref{eq:B[y]_ind_of_alpha}, let $\Gamma$ be a loop
   that encloses all the cores in $\alpha$ for
    which $\Gamma = \partial A$. Then
  \begin{displaymath}
    \int_\Gamma \alpha = \sum_{C \in A}
    \int_{\partial C} \alpha = \sum_{C \in \Cores} \int_{\partial C} \alpha.
  \end{displaymath}
  Taking $\Gamma$ such that $Dy_b = \alpha_b = \alpha_b'$ for all $b
  \in \Gamma$ we obtain the first identity in
  \eqref{eq:B[y]_ind_of_alpha} as well.
  %
  %
  %
\end{proof}

We can now formally define the {\em net Burgers vector}.

\begin{definition}[Net Burgers Vector]
  \label{def:netB}
  Let $y \in \Us$ such that $\alpha_b \to 0$ as $\d{b}\to\infty$ for
  some $\alpha \in [Dy]$. Then we define the {\em net Burgers vector}
  of $y$ to be
  \begin{displaymath}
    B[y] := \sum_{C\in\Cores[\alpha]} \int_{\partial C} \alpha,
  \end{displaymath}
  for an arbitrary $\alpha \in [Dy]$.
\end{definition}

\medskip

The quantity $B[y]$ can be experimentally observed from outside the
system, by determining the strain at `infinity'. For example, if $B[y]
= 1$, then this tells the observer that there must be at least one
dislocation in the system, but nothing about the total number.

\section{Main Result}
\label{sec:main_result}
In this section we present the main result of the paper with accompanying
assumptions.

\subsection{Energy difference functional}
\label{sec:Ediff_defn}
Before we can state the main result, we introduce another key concept
that we employ in its formulation and proof: the energy difference
functional. We assume that lattice sites (corresponding to lines of
atoms in the BCC crystal) interact via a nearest-neighbour pair
potential $\psi \in C(\R) \cap C^4(\R\setminus (\Z+1/2))$, which
satisfies the following properties: \\[1mm]
\qquad
\begin{minipage}{10cm}
  \begin{itemize}
  \item[($\psi$1)] $\psi$ is 1-periodic;
  \item[($\psi$2)] $\psi$ and $\psi(\smfrac12 + \cdot)$ are even;
  \item[($\psi$3)] $\psi(r) = 0$ if and only if $r \in \Z$;
  \item[($\psi$4)] $\psi''(0) = \mu > 0$.
  \item[($\psi$5)] $\psi(x)\geq \smfrac12\psi''(0)\,x^2$ for all
    $x\in[-\smfrac12,\smfrac12]$.
  \end{itemize}
\end{minipage}

\begin{remark}
  Assumptions $(\psi1)-(\psi4)$ are very general, and are natural in
  the physical context: $(\psi1)$ and $(\psi2)$ encode lattice
  symmetries, while $(\psi3)$ and $(\psi4)$ state that the system has
  a stable crystalline ground state.

  The only ``technical'' assumption is $(\psi5)$. The reason for this
  assumption will become apparent in \S\ref{sec:lower_bound}, where we
  use it to establish an a priori bound on the number of dislocation
  dipoles in finite energy configurations. We believe that $(\psi5)$
  can be replaced with weaker variants, but cannot be removed
  altogether.
  
  We remark that the requirement that $\psi\in\CC^4(\R\setminus (\Z+1/2))$
  can be relaxed further by modifying the proofs we give below, but since
  this adds little at the expense of readability, we omit such arguments
  here.
  The prototypical example of a potential satisfying $(\psi1)-(\psi5)$
  is $\psi(r) = \psi^{\rm lin}(r) := \frac12 {\rm dist}(r,
  \Z)^2$. 
\end{remark}

\medskip
For two displacements $y, \tilde{y} \in \Us$ we define the energy difference
functional, formally for the moment, as
\begin{displaymath}
  E(y; \tilde{y}) := \sum_{b \in \Bonds} \b[ \psi(Dy_b) -
  \psi(D\tilde{y}_b) \b].
\end{displaymath}
For example, if $y - \tilde{y} \in \Usz$, then $E(y; \tilde{y})$ is
clearly well-defined since the sum is effectively finite. For
arbitrary displacements $y, \tilde{y}$, $E(y; \tilde{y})$ need not be
well-defined. However, we will show in \S\ref{sec:ext_Ediff} that $E$
can, under certain conditions, be extended by continuity to relative
displacements $y - \tilde{y} \in \Hsi$.

Using the terminology of energy differences, we can define what we
mean by a stable equilibrium displacement. Intuitively, the definition
entails that finite energy perturbations cannot lower the energy.

\begin{definition}[Stable Equilibrium]
  \label{def:defn_stable_equilib}
  A displacement $y\in\Us$ is a {locally stable equilibrium} if there exists
  $\epsilon > 0$ such that $E(y + u; y) \geq 0$ for all $u \in \Usz$
  with $\| D u \|_2 \leq \epsilon$.

  A displacement $y\in\Us$ is a {globally stable equilibrium} if $E(y
  + u; y) \geq 0$ for all $u \in \Usz$.
\end{definition}

\subsection{Statement of the main result}
Recalling Definitions \ref{def:netB} and \ref{def:defn_stable_equilib}
the existence of a screw dislocation can be formulated as follows.

\begin{theorem}[Existence of a geometrically necessary dislocation]
  \label{th:ex_nec_disl}
  There exists a globally stable equilibrium displacement $y\in\Us$ with
  net Burgers vector $B[y] = 1$.
\end{theorem}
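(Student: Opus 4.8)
The plan is to recast Theorem~\ref{th:ex_nec_disl} as a variational problem, as the authors announce they will do in Theorem~\ref{th:minim_E}: fix a reference displacement $\yh$ carrying the correct far-field behaviour --- the linearised-elasticity field of a single screw dislocation of Burgers vector $1$, which decays like $|x|^{-1}$ in its gradient so that $D\yh$ is (borderline) in $\ell^2$ up to a logarithmic factor, or is at least $\ell^2$ away from the core --- and then minimise $\E(u) := E(\yh+u;\yh)$ over $u \in \Hsi$. A minimiser $u^*$ then yields $y^* = \yh + u^*$, and since $u^*$ has finite $\ell^2$ strain its perturbation does not change the net Burgers vector, so $B[y^*] = B[\yh] = 1$; that $y^*$ is a globally stable equilibrium is precisely the statement that $E(y^*+v; y^*) \ge 0$ for all $v \in \Usz$, which follows from minimality of $u^*$ together with the density of $\Usz$ in $\Hsi$ and continuity of $\E$ (established in \S\ref{sec:Ediff}).

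First I would invoke the extension result of \S\ref{sec:ext_Ediff} to see that $\E$ is well-defined and continuous on $\Hsi$. Second, I would establish a lower bound: using $(\psi5)$, $\psi(Dy_b) = \psi(\alpha_b) \ge \tfrac12 \mu\, \alpha_b^2$, so the energy controls $\sum_b \alpha_b^2$ where $\alpha \in [D(\yh+u)]$; combined with the fact that $D\yh$ decays, this gives coercivity of $\E$ with respect to the elastic strain $\alpha$, and in particular shows minimising sequences have uniformly bounded strain energy and hence (via the a~priori bound on the number of dipoles) a bounded number of dislocation cores. Third --- the key compactness step --- I would extract a limit from a minimising sequence $(u_n)$. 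Since $\Hsi$-bounded sequences need not converge (the energy is translation-invariant in the sense that dipoles can escape to infinity or cores can drift), the natural tool, flagged in the abstract, is a concentration-compactness dichotomy: either the ``mass'' (strain energy, or the cores) stays put --- giving a convergent subsequence whose limit is the desired minimiser --- or it splits, in which case one argues that splitting off a sub-configuration strictly decreases the energy relative to what a single coherent minimiser achieves, contradicting minimality, OR it vanishes/escapes, which one rules out because the net Burgers vector $1$ is a topological obstruction: at least one core of nonzero charge must survive, and it cannot run to infinity without paying unbounded elastic energy (the $\log$ divergence of the self-energy of an isolated screw dislocation).

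The main obstacle is exactly this concentration-compactness argument, and within it the delicate point is handling the lattice-symmetry invariance $E(y;\tilde y)=0$ when $y-\tilde y \in \Z^\L$: because of this, the energy does not see integer shifts, so a minimising sequence can ``leak'' structure to infinity at zero energy cost, and one must quantify how much energy is genuinely localised near the cores versus carried by the far field. I expect one needs a careful splitting lemma showing that if $\|Du_n\|_2$ concentrates on two well-separated regions then $\E(u_n)$ is asymptotically the sum of two independent sub-problem energies, together with a strict-subadditivity statement $\E_{\min}(B=1) < \E_{\min}(B=1+k) + \E_{\min}(B=-k)$ for the problem of creating an extra dipole --- i.e. dipoles cost strictly positive energy (this is where $(\psi5)$ and the logarithmic lower bound on dipole self-energy enter). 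Once vanishing and dichotomy are excluded, compactness holds, weak lower semicontinuity of $\E$ (convexity of $\psi^{\rm lin}$-type near $0$, or lsc of the strain-energy part) finishes the existence of a minimiser, and the translation back to Theorem~\ref{th:ex_nec_disl} is then routine.
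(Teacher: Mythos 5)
Your high-level plan coincides with the paper's (minimise $\E(u)=E(\yh+u;\yh)$ over $\Hsi$, extend $\E$ by continuity, prove coercivity, pass to the limit by concentration compactness, read off $B[y]=1$ and global stability), but the two steps you treat as routine are precisely where the work lies, and as written both contain genuine gaps.

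First, the coercivity you claim does not follow from $(\psi5)$ plus decay of $D\yh$. The energy difference cannot control $\sum_b\alpha_b^2$, since $\alh_b\sim(2\pi\d{b})^{-1}$ is not square-summable; the correct coercivity variable is $\beta:=\alpha-\alh$, and after expanding one gets
\[
\E(u)\;\gtrsim\;\|\beta\|_2^2-\sum_{b\in\Bonds}\psi'(\alh_b)z_b+\<\del\E(0),u\>-C,
\qquad Du=\beta+z,\ z:\Bonds\to\Z .
\]
Neither of the two cross terms is controlled by $(\psi5)$: because of the integer-shift invariance the energy gives no bound on $Du$ (only on $\beta$), so $\<\del\E(0),u\>\lesssim\|Du\|_2$ is useless, and the dipole--background term $\sum_b\psi'(\alh_b)z_b$ has no a priori sign. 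The paper has to \emph{construct} a good representative of each $u$: minimise the branch-cut length over integer shifts (the discrete minimal connection property), re-centre a positive core at the origin (the discrete optimal connection property), reduce cuts to at most two straight segments, prove $\<\del\E(0),u\>\lesssim\|\beta\|_2$ via the $\log$-BMO-type inequality and a bound on how often $\Gamma_\xi$ crosses cuts, and finally bound each cut's contribution below by $-\psi''(0)\arcsinh(2/\sqrt3)/\pi+O(r^{-1})$ per dipole, which beats the per-core energy $\tfrac13\psi''(0)\|\beta\|_2^2$-contribution only because $\arcsinh(2/\sqrt3)/\pi\approx0.314<\tfrac13$. None of this is replaceable by the one-line appeal to $(\psi5)$ in your proposal.

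Second, your mechanism for excluding loss of the Burgers vector at infinity is wrong in this setting: since $\yh$ already carries the dislocation far field, a sequence $u^n$ that inserts a dipole annihilating the core at the origin and placing the positive core at distance $R_n\to\infty$ has \emph{bounded} energy (translation invariance makes the relative energy essentially zero), so there is no "unbounded self-energy" obstruction, and generic minimising sequences really can converge to a limit with $B=0$ --- the authors say exactly this in their proof outline. The paper's remedy is not a strict subadditivity inequality of the form you posit (which is never proved and is not needed); it is to use the horizontal shift invariance to re-centre a positive core at $C_0$ along the minimising sequence, and then to show by the profile decomposition that every escaping cluster of cores has net Burgers vector zero: if a cluster had nonzero charge, Jensen's inequality on expanding convex rings $B^0_k$ with $|\partial B^0_k|=|\partial B^0_{k-1}|+6$ would force $\|\beta^n\|_2^2\gtrsim\log k$, contradicting the energy bound. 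Together with the DOCP-based bound on $\|z^n\|_1$ this gives weak compactness of $Du^n=\beta^n+z^n$, after which Fatou plus weak continuity of the two linear terms yields lower semicontinuity and the conclusion, as in your final step.
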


\medskip

The notion of global stability described in Definition
\ref{def:defn_stable_equilib} is equivalent to the statement that a
displacement is stable if any finite energy perturbation increases the
energy of the system.  Describing a dislocation configuration as the
minimiser of an energy difference functional gives us access to the
Direct Method of the Calculus of Variations.

We refer to this result as the existence of a `geometrically
necessary' dislocation since we do not prescribe the absolute number
of dislocation cores, but only the net Burgers
  vector.

\begin{proof}[Outline of the proof of Theorem \ref{th:ex_nec_disl}]
  \hfill
  \begin{enumerate}
  \item We define a reference configuration $\yh(\xi) = \frac{1}{2\pi}
    \arctan(\frac{\xi_2}{\xi_1})$ (the continuum linear elasticity
    solution for a dislocation), with the aim to minimise the energy
    difference functional $\E(u) := E(\yh+u; \yh)$ over a suitable
    class of functions $u$. 

    We show that this functional, initially defined over $\Usz$, can
    be continuously extended to a functional over $\Hsi$.

  \item In order to use the Direct Method to establish the existence
    of a minimiser to $\E$, the crucial step is to obtain a global
    lower bound on the energy. This is the main step in the proof, and
    requires careful geometric estimates based on the number of
    dislocation cores and the distance between them. We shall prove
    that $\E(u) \gtrsim \|\beta\|_2^2 - 1$, where $\beta$ can be
    thought of as belonging to $[Du]$ (however, see
    \eqref{eq:defn_beta} for the precise definition).

  \item This lower bound guarantees in particular that the number of
    dislocation cores is bounded along a minimising sequence as well
    as weak compactness of a minimising sequence $u^n$.

  \item The final step is to ensure that $\lim u^n$ has non-zero
    net-Burgers vector. This need not be the case since weak
    convergence of $D u^n$ allows for energy to be translated to
    infinity. In our present context it is possible, by introducing a
    dislocation dipole, to effectively translate the geometrically
    necessary core to infinity, and thus obtain a limiting
    displacement with zero net Burgers vector. We shift the minimising
    sequence and employ a concentration compactness argument to
    prevent this.
  \end{enumerate}
\end{proof}

\subsection{Locally stable equilibria}
Theorem \ref{th:ex_nec_disl} establishes the existence of a
configuration $y = \yh + u$, which is a {\em globally stable}
equilibrium configuration for a single screw dislocation in an
infinite lattice. From this starting point, it is possible to
construct more general {\em locally stable} equilibrium
configurations. The idea is (1) to superimpose copies of $y$ and define
\begin{displaymath}
  \tilde{z}(\xi) := \sum_{j = 1}^J s_j y\b( \xi - x^{C_j} \b),
\end{displaymath}
where $C_j \in \Cells$ are cores in $\tilde{z}$ and $s_j \in \{\pm
1\}$ the Burgers' vectors of these cores; (2) to show that $\tilde{z}$
is an approximate equilibrium when the cores $C_j$ are sufficiently
far from one another; and (3) to apply the inverse function theorem to
establish the existence of an equilibrium $z$ close to $\tilde{z}$.

Here, we only state two results that we obtain by this strategy, but
refer for their proofs to~\cite{HudsOrt:disl_ift}, where we present
them in a more general context.

In step (3) of the strategy outlined above we require a discrete
ellipticity condition \eqref{eq:ellipticity}, which can be established
rigorously, for example, for a piecewise quadratic potential.

\begin{lemma}[Discrete ellipticity]
  Let $\psi(r) := \psilin(r) := \smfrac{\lambda}{2}{\rm dist}(r,
  \Z)^2$ and let $y = \yh + u$, $u \in \Hsi$, be a locally stable
  equilibrium configuration. Then $Dy_b \in \R \setminus (\frac12
  +\Z)$ for all $b \in \Bonds$, and hence
  \begin{equation}
    \label{eq:ellipticity}
    \sum_{b \in \Bonds} \psi''(Dy_b) Dv_b^2
    \geq \lambda \| D v \|_2^2 \qquad \forall v \in
    \Hsi.
  \end{equation}
\end{lemma}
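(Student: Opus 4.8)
The plan is to prove the two assertions in order: first, that a locally stable equilibrium $y = \yh + u$ has no bond difference $Dy_b$ landing exactly on a half-integer, and second, that this ``away from the singularity'' property, combined with local stability, forces the quantitative lower bound \eqref{eq:ellipticity}. For the first claim, I would argue by contradiction: suppose $Dy_{b_0} \in \frac12 + \Z$ for some bond $b_0$. For the piecewise quadratic potential $\psilin(r) = \frac{\lambda}{2}\dist(r,\Z)^2$, the function $r \mapsto \psilin(r)$ has a \emph{concave} kink at each half-integer --- the one-sided second derivatives are both $\lambda$, but crossing the half-integer the ``distance to the nearest integer'' switches which integer it measures to, so $\psilin$ is locally a tent-shaped maximum of two upward parabolas, hence locally concave at $b_0$ in the sense that perturbing $Dy_{b_0}$ in one direction strictly decreases $\psilin(Dy_{b_0})$ at first order. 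Concretely, take $v \in \Usz$ supported so that $Dv_{b_0} \neq 0$ and $Dv_b = 0$ for all other bonds meeting the relevant kink (e.g. $v$ a single-site bump), and compute $E(y + tv; y)$ for small $t > 0$: the $b_0$ term contributes $\psilin(Dy_{b_0} + tDv_{b_0}) - \psilin(Dy_{b_0})$, which behaves like $-\frac{\lambda}{2}|Dv_{b_0}|\,|t| + O(t^2)$ because we move off the peak of the tent, while every other bond contributes only $O(t^2)$. Hence $E(y+tv; y) < 0$ for $t$ small, contradicting local stability. One has to be slightly careful that $v$ only disturbs finitely many bonds and that the other bond terms are genuinely $O(t^2)$, which holds since those $Dy_b$ are bounded away from the half-integer set (only $b_0$ was assumed singular) and $\psilin$ is $C^2$ there.

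For the second claim, once we know $Dy_b \notin \frac12 + \Z$ for every $b$, the potential $\psilin$ is twice differentiable at each $Dy_b$ with $\psilin''(Dy_b) = \lambda$ identically (the second derivative of a quadratic is constant), so the Hessian form is literally $\sum_{b} \psilin''(Dy_b) Dv_b^2 = \lambda \sum_b Dv_b^2 = \lambda \|Dv\|_2^2$, and \eqref{eq:ellipticity} holds with equality for \emph{all} $v \in \Hsi$. This is almost immediate for the piecewise quadratic case; the only subtlety is whether $\sum_b \psilin''(Dy_b) Dv_b^2$ makes sense for $v \in \Hsi$ (not just $\Usz$) --- but since $\psilin'' \equiv \lambda$ wherever it is defined and $Dv \in \ell^2(\Bonds)$, the sum converges absolutely and equals $\lambda\|Dv\|_2^2$, and one extends from $\Usz$ by density if a limiting argument is preferred. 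I would phrase the statement as: the finiteness/convergence is trivial here precisely because $\psilin''$ is bounded, whereas for a general $\psi$ satisfying only $(\psi1)$--$(\psi5)$ the content of the lemma is the dichotomy that either the configuration sees the singular set (and is then shown to be unstable) or it does not and the uniform bound $\psi''(Dy_b) \geq$ (some positive constant) on the relevant range yields \eqref{eq:ellipticity}.

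The main obstacle --- such as it is for this particular lemma --- is the first-order expansion at the kink: making precise that a locally stable equilibrium \emph{cannot} have a bond at a half-integer. The delicate point is choosing the test perturbation $v$ so that (i) $v \in \Usz$ with $\|Dv\|_2 \le \eps$ (so it is admissible in Definition \ref{def:defn_stable_equilib}), (ii) $Dv_{b_0} \neq 0$ so we actually move off the peak, and (iii) the contribution of all other bonds to $E(y+tv;y)$ is genuinely higher order. A single-site perturbation $v = c\,\delta_{\xi}$ (value $c$ at one lattice site $\xi$, zero elsewhere, adjusted to respect $v(\xi_0)=0$) changes exactly the six bonds incident to $\xi$; for $t$ small all those $Dy_b + tDv_b$ stay in a neighbourhood where $\psilin$ is $C^2$ except possibly at $b_0$ itself, and the $b_0$-term's one-sided derivative from the peak is $-\frac{\lambda}{2}|Dv_{b_0}|$ on at least one side. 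Choosing the sign of $t$ to realise that decrease gives $E(y+tv;y) = -\frac{\lambda}{2}|Dv_{b_0}|\,|t| + O(t^2) < 0$, the contradiction. Everything else is bookkeeping: the lemma is essentially the observation that piecewise quadratic potentials have constant second derivative away from their concave kinks, and that local stability excludes those kinks.
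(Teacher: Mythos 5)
Your argument is essentially the paper's own proof: the paper also rules out $Dy_b \in \frac12+\Z$ via a single-site perturbation $y + t\delta_{\cdot,\xi}$ whose kinked bond gives a strictly negative first-order contribution, after which $\psilin'' \equiv \lambda$ away from the half-integers yields \eqref{eq:ellipticity} (with equality). The only slight imprecision is your suggestion that one can choose $v$ so that no other kinked bond is disturbed --- a single-site bump moves all six incident bonds --- but this is harmless, since any additional bonds at half-integers also contribute strictly negative first-order terms and only reinforce the contradiction.
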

\begin{proof}
  Suppose there exists a bond $b = (\xi,\zeta)$ such that $Dy_b \in
  \frac12 + \Z$. Let $z_t(\eta) := y + t \delta_{\eta,\xi}$, then a
  direct calculation shows that $E(z_t; y) < 0$ for some sufficiently
  small $t$ (either positive or negative).
\end{proof}

The two results we state in the following admit general $\psi$, but
require \eqref{eq:ellipticity} as an assumption:

\medskip

\begin{description}
\item[{\bf (STAB)}] There exists a locally stable equilibrium $y = \yh
  + u$, $u \in \Hsi$, satisfying the ellipticity condition
  \eqref{eq:ellipticity}. Moreover, let $A$ be a finite union of cells
  such that $\Cores[\alpha] \subset A$, for any $\alpha \in [Dy]$.
\end{description}

\medskip

Our first local stability result states that any configuration of
dislocations is stable provided that the cores are sufficiently
separated. In particular, it shows that there exist stable
configurations with arbitrary net Burgers.

\begin{corollary}[Finitely many cores]
  \label{th:finitely_many}
  Suppose that {\bf (STAB)} holds.
  
  Let $C_j \in \Cells, j = 1, \dots J, J \in \N$, be a finite
  collection of cells, and let $s_j \in \{\pm 1\}$. There is a minimal
  separation distance $L_0 > 0$ such that, if $\min_{i \neq j}
  |x^{C_j} - x^{C_i}| \geq L_0$, then there exists a locally stable
  configuration $z \in \Us$ such that, for any $\alpha \in [Dz]$,
  \begin{align*}
    & \Cores[\alpha] \subset \bigcup_{j = 1}^J \b( x^{C_j} + A \b) \qquad
    \text{and} \\
    & \int_{\partial (x^{C_j} + A)} \alpha = s_j, \qquad \text{for } j
    = 1, \dots, J.
  \end{align*}
  In particular, $B[z] = \sum_{j = 1}^J s_j$.
\end{corollary}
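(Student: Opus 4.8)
The plan is to carry out the three-step scheme sketched above. I would take the superposition $\tilde{z}(\xi) := \sum_{j=1}^{J} s_j\,y(\xi - x^{C_j})$ as a predictor, show that it is an \emph{approximate} equilibrium once the cores are sufficiently separated, and then correct it to an exact equilibrium by a quantitative implicit function argument. Throughout I work in $\Hsi$: writing $\bar{z} := \sum_{j=1}^{J} s_j\,\yh(\cdot - x^{C_j})$ for the corresponding linearised-elasticity field, one has $\tilde{z} - \bar{z} = \sum_{j} s_j\,u(\cdot - x^{C_j}) \in \Hsi$ (after normalising at $\xi_0$), and the energy-difference functional and its variations about $\bar{z}$ are well defined on $\bar{z} + \Hsi$ by the arguments of \S\ref{sec:ext_Ediff}, applied about each translate of $\yh$. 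Since shifting the reference displacement changes $E(\,\cdot\,;\,\cdot\,)$ only by an additive constant, it suffices to find $z = \tilde{z} + w$, $w \in \Hsi$, such that the first variation $v \mapsto \sum_b \psi'(Dz_b)\,Dv_b$ vanishes on $\Hsi$ and the Hessian $v \mapsto \sum_b \psi''(Dz_b)\,Dv_b^2$ is uniformly positive there; a second-order Taylor expansion then yields $E(z+v;z) \geq 0$ for all $v \in \Usz$ with $\|Dv\|_2$ small, that is, local stability. (For all of this one must keep the relevant finite differences a fixed distance away from $\Z + \smfrac12$, so that $\psi$ is smooth along the segments involved; this is built into the estimates below.)

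First I would show that $\tilde{z}$ is an approximate equilibrium. Each $s_j\,y(\cdot - x^{C_j})$ is itself a (locally stable) equilibrium --- using $(\psi1)$--$(\psi2)$ when $s_j = -1$ --- so the residual of the first variation at $\tilde{z}$ is represented by $r_b := \psi'(D\tilde{z}_b) - \sum_{j} \psi'\big(D[s_j\,y(\cdot - x^{C_j})]_b\big)$, which collects only the ``cross'' interactions between distinct cores. Using that $D[y(\cdot - x^{C_j})]_b$ decays like $\dist(b, x^{C_j})^{-1}$, that $\psi'$ is Lipschitz on the (compact, $(\Z+\smfrac12)$-avoiding) range of the relevant finite differences, and that $|x^{C_i} - x^{C_j}| \geq L_0$ for $i \neq j$, a Taylor expansion gives $\|r\|_2 \leq \eta(L_0)$ with $\eta(L_0) \to 0$ as $L_0 \to \infty$ (in fact $\eta(L_0) \lesssim_J L_0^{-1}\sqrt{\log L_0}$); by Cauchy--Schwarz the residual functional on $\Hsi$ then has dual norm at most $\eta(L_0)$.

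The hard part is to establish uniform positivity of the Hessian at the predictor: I need $c_0 > 0$ and $L_1 > 0$, depending only on $J$, $\psi$ and $y$, with $\sum_b \psi''(D\tilde{z}_b)\,Dv_b^2 \geq c_0\,\|Dv\|_2^2$ for all $v \in \Hsi$ whenever $\min_{i \neq j} |x^{C_i} - x^{C_j}| \geq L_1$. Outside a bounded neighbourhood of the cores $D\tilde{z}_b$ is small and $\psi''(D\tilde{z}_b) \geq \smfrac12\mu$; in particular all finite differences $D\tilde{z}_b$, and $D(\tilde{z} + w)_b$ for $\|Dw\|_2$ small, keep a definite distance from $\Z + \smfrac12$, so $\psi \in C^3$ along the relevant segments and the quadratic form depends Lipschitz-continuously on the configuration. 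To prove the coercivity I would argue by contradiction and compactness, in the spirit of \S\ref{sec:mainproof}: were it to fail, there would be separations $L_0^{(n)} \to \infty$ and $v^{(n)}$ with $\|Dv^{(n)}\|_2 = 1$ along which the form is $\leq \smfrac1n$, and a concentration--compactness dichotomy would force $Dv^{(n)}$, after translation, either to concentrate near a single core $C_j$ --- contradicting the ellipticity assumption \eqref{eq:ellipticity} of {\bf (STAB)} (applied to the translated single-dislocation equilibrium, for which \eqref{eq:ellipticity} also holds when $s_j = -1$ by the symmetry $(\psi2)$) --- or to spread away from all cores, where the form is bounded below by $\smfrac12\mu\|Dv^{(n)}\|_2^2 \not\to 0$. (Equivalently, one localises $v$ by a partition of unity subordinate to the core neighbourhoods and to the far field, applies \eqref{eq:ellipticity} to each localised piece and $\psi''(0) = \mu > 0$ to the far-field piece, and absorbs the commutator terms, which are $O(L_0^{-1})$.) I expect this step, together with the bookkeeping needed to keep all finite differences away from $\Z + \smfrac12$, to be the main obstacle.

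With Steps 1 and 2 in hand, the quantitative implicit function (Newton--Kantorovich) theorem produces, for $L_0$ large, a unique $w \in \Hsi$ with $\|Dw\|_2 \leq 2c_0^{-1}\eta(L_0)$ and $z := \tilde{z} + w$ an equilibrium; since $\|Dw\|_2 \to 0$ and the Hessian at $\tilde{z}$ is uniformly positive and Lipschitz in the configuration, the Hessian at $z$ is still $\geq \smfrac12 c_0\,\|D\,\cdot\,\|_2^2$, whence $z$ is locally stable by the Taylor argument above. It remains to check that the cores of $z$ are as claimed. For $L_0$ large the bond-length $1$-form of $z$ (normalised as in Remark \ref{rem:alpha_ambig}) coincides near each $x^{C_j}$ with the (translated) bond-length $1$-form of $s_j\,y$ --- the corrections from the other cores and from $w$, of order $J/L_0 + \|Dw\|_2$, being too small to change which integer each $Dz_b$ is near, the relevant values in $[Dy]$ avoiding $\pm\smfrac12$ by {\bf (STAB)} --- and is $\leq \smfrac12$ in absolute value on the complement of a bounded region around the cores, which we may take slightly larger than $\bigcup_j(x^{C_j}+A)$ since $\alpha_b \to 0$ for $\alpha \in [Dy]$. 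Hence for any $\alpha \in [Dz]$ so normalised, the cores of $z$ lie in $\bigcup_j(x^{C_j}+A)$, being exactly the translates of those of the $s_j\,y$, and, the Burgers vector around a cell being integer-valued, $\int_{\partial(x^{C_j}+A)}\alpha = s_j\,B[y] = s_j$ (recalling that the equilibrium furnished by {\bf (STAB)} has net Burgers vector $1$). Summing over $j$ gives $B[z] = \sum_{j=1}^{J} s_j$, and choosing $L_0$ above all the thresholds produced above completes the argument; the detailed estimates are carried out in \cite{HudsOrt:disl_ift}.
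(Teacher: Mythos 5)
Your proposal follows essentially the same route as the paper: the paper itself gives no proof of this corollary here, but outlines exactly the three-step strategy you flesh out (superposition of translated copies of the single-dislocation equilibrium, an approximate-equilibrium residual estimate for well-separated cores, and a quantitative inverse function theorem using the ellipticity assumption of \textbf{(STAB)}), with the detailed estimates deferred to \cite{HudsOrt:disl_ift}. Your sketch, including the Hessian coercivity via localisation near each core plus the far-field bound $\psi''(0)=\mu>0$, and the identification of the cores and Burgers vectors of the corrected configuration, is consistent with that strategy.
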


\medskip

Our second local stability result states that dislocations are stable
provided they are sufficiently distant from any domain boundary. To
state this result, let $\Omega := \{ \xi \in \L \sep \xi_2 \leq 0\}$
be a discrete half space and let $\Bonds^\Omega := \{ b =
(\xi,\zeta) \in \Bonds \sep \xi,\zeta \in \Omega\}$ be the
corresponding set of bonds.

\begin{corollary}[Domain with boundary]
  \label{th:boundaries}
  Suppose that {\bf (STAB)} holds.

  Let $C \in \Cells$ such that $L := - (x^C)_2 > 0$. If $L$ is
  sufficiently large, then there exists a locally stable half-space
  configuration $z : \Omega \to \R$ containing a dislocation. That is,
  for any $\alpha \in [Dz]$,
  \begin{enumerate}
  \item $\Cores[\alpha] \subset x^C + A$,
  \item $\int_{\partial (x^C+A)} \alpha = 1$, and
  \item there exists $\epsilon > 0$ such that
    \begin{displaymath}
      \sum_{b \in
        \Bonds^\Omega} \B( \psi(Dz_b+Dv_b) - \psi(Dz_b) \B) > 0 \qquad
      \forall v \in \Usz, \|Dv\|_2 < \epsilon.
    \end{displaymath}
  \end{enumerate}
\end{corollary}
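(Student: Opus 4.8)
The plan is to obtain $z$ as a small correction of the translated single-dislocation equilibrium supplied by {\bf (STAB)}, following the ``superpose copies of $y$, then apply the implicit function theorem'' strategy sketched above, in the degenerate case of a single copy. First I would fix the candidate: orient the branch cut of $\yh$ so that it points in the direction of decreasing $\xi_2$ (the cut is immaterial to the deformation it induces), and put $z_0(\xi) := y(\xi - x^C)$ for $\xi \in \Omega$, where $y = \yh + u$ is the equilibrium in {\bf (STAB)}. For $L := -(x^C)_2$ large, the cut and the finite core region $x^C + A$ lie strictly inside $\Omega$, so $z_0 \colon \Omega \to \R$ is well defined; its cores are the translates of those of $y$, hence are contained in $x^C + A$, and $\int_{\partial(x^C+A)} \alpha^{z_0} = B[y] = 1$. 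On a discrete $H^1$-type space $\Hsi_\Omega$ adapted to $\Omega$ (with the reference point moved inside $\Omega$, since $\xi_0 \notin \Omega$) one then defines, by the arguments that extend $\E$ to $\Hsi$, the half-space energy difference $E^\Omega(w; z_0) := \sum_{b \in \Bonds^\Omega} \b[\psi(Dw_b) - \psi(Dz_{0,b})\b]$ together with its first and second variations.

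Two a priori estimates drive the argument. The first is that $z_0$ is an \emph{approximate} equilibrium of $E^\Omega$, with residual vanishing as $L \to \infty$. Since $y$ solves the full-lattice equilibrium equations, translation invariance shows that the residual force of $z_0$ for the half-space problem is supported on the boundary row of $\Omega$, where it equals the ``missing'' contributions $\psi'(Dz_{0,b})$ from the bonds $b$ crossing $\{\xi_2 = 0\}$. Such bonds lie at distance $\gtrsim L$ from $x^C$, so $1$-periodicity of $\psi'$ gives $|\psi'(Dz_{0,b})| \lesssim \d{b}^{-1} + |Du_b|$ (with $u$ translated), and summing along the boundary line yields a residual of size $\delta(L) \to 0$ in the relevant dual norm; here one uses $Du \in \ell^2(\Bonds)$ and the discrete trace/Hardy-type estimates needed to pair a slowly decaying boundary residual against functions in $\Hsi_\Omega$.

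The second estimate is that the Hessian of $E^\Omega$ at $z_0$ is coercive \emph{uniformly in $L$}, i.e.\ $\langle \delta^2 E^\Omega(z_0) v, v \rangle \geq c \, \|Dv\|_{\ell^2(\Bonds^\Omega)}^2$ with $c > 0$ independent of $L$. To prove this I would split $\Bonds^\Omega$ into the bonds within a fixed radius $R_0$ of $x^C$ -- which, for $L$ large, lie inside $\Omega$, so that the half-space Hessian there is the full-lattice one and is controlled, after inserting a fixed cutoff around the core, by the ellipticity assumption \eqref{eq:ellipticity} -- and the remaining bonds, on which $Dz_{0,b}$ is close to $\Z$ and hence $\psi''(Dz_{0,b}) \geq c_0 > 0$ (by continuity of $\psi''$ near $\Z$ and~($\psi$4)); reassembling, and noting that the cutoff cross-terms sit in a region where $\psi'' > 0$, gives the bound. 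Granting these, a quantitative inverse function theorem -- equivalently, minimisation of $v \mapsto E^\Omega(z_0 + v; z_0)$ over a fixed small ball in $\Hsi_\Omega$ -- produces, for $L$ large, a locally stable half-space equilibrium $z = z_0 + v_L$ with $\|Dv_L\|_{\ell^2(\Bonds^\Omega)} \lesssim \delta(L)/c \to 0$; property (3) is precisely the uniform coercivity transported from $z_0$ to $z$, which holds for $L$ large. Finally, since $Dv_L$ is small in $\ell^2$ (hence pointwise) while $Dy_b \notin \tfrac12 + \Z$ for all $b$ (implicit in \eqref{eq:ellipticity}), the Burgers vector of $z$ around $\partial(x^C+A)$ and around any cell outside $x^C + A$ depends continuously on $v_L$ and is integer-valued, hence equals that of $z_0$; this yields $\Cores[\alpha] \subset x^C + A$ and $\int_{\partial(x^C+A)} \alpha = 1$ for every $\alpha \in [Dz]$, which are (1) and (2).

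The step I expect to be the main obstacle is the uniform-in-$L$ coercivity of the half-space Hessian, together with the functional-analytic set-up it forces on the first estimate: one must simultaneously handle the indefinite core contribution -- available only through the \emph{global} estimate \eqref{eq:ellipticity} rather than pointwise positivity of $\psi''$ -- and the free boundary, and one must choose $\Hsi_\Omega$ carefully enough that the boundary residual of the first estimate, which decays only like $\d{b}^{-1}$, still defines a bounded linear functional on it. Once the correct spaces and these two estimates are in place, the remainder is a routine perturbation argument of the type already used for the full-lattice problem; full details are carried out in a more general setting in \cite{HudsOrt:disl_ift}.
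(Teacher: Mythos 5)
Your overall strategy (predictor plus quantitative inverse function theorem, with uniform coercivity of the half-space Hessian supplied by \eqref{eq:ellipticity} away from the core and by $\psi''>0$ elsewhere) is exactly the strategy the paper sketches and then delegates to \cite{HudsOrt:disl_ift}, so in spirit you are on the right track. However, there is a genuine gap in your choice of predictor. Taking $z_0 := y(\cdot - x^C)$ (the plain restriction of the translated full-lattice equilibrium) does \emph{not} give an approximate equilibrium with residual $\delta(L)\to 0$ in the dual norm of a discrete homogeneous $H^1$ space on $\Omega$ --- in fact the residual is not even a bounded functional there, no matter how the space is normalised. To see this, note that the residual is concentrated on the boundary row and its leading part is $\psi''(0)$ times the sum of $\alh(\cdot-x^C)$ over the omitted bonds, which behaves like the continuum Neumann datum $\partial_2\yh$, i.e.\ like $s/(s^2+L^2)$ at horizontal distance $s$ from the core. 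The Riesz representative of this functional is (minus) the field of the \emph{image dislocation} reflected across $\partial\Omega$; its strain decays only like $1/r$ and hence has logarithmically divergent $\ell^2$ norm over the half lattice. Equivalently, the true half-space equilibrium differs from $y(\cdot-x^C)$ by a non-finite-energy field, so no corrector $v_L$ with $\|Dv_L\|_2\lesssim \delta(L)$ can exist, and the perturbation step you rely on cannot close. This is not the ``careful choice of $\Hsi(\Omega)$'' technicality you flag; it is an obstruction to the predictor itself. Your heuristic bound ``$|\psi'(Dz_{0,b})|\lesssim \d{b}^{-1}+|Du_b|$, hence the residual is small'' fails precisely because the $\d{b}^{-1}$ tail along the boundary is borderline non-summable against $H^1$ traces (the $|Du_b|$ part is harmless, e.g.\ by \eqref{eq:decay_rate}).

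The repair, which is what the companion paper does and what the classical continuum theory suggests, is to build the image into the predictor: take $\tilde z := \yh(\cdot - x^C) - \yh(\cdot - x^{C^*}) + u(\cdot - x^C)$ (restricted to $\Omega$, with $C^*$ the reflection of $C$ across $\partial\Omega$, and with the lattice map $F^C$ of \eqref{eq:FC_defn} rather than a bare translation, since $x^C$ need not be a lattice vector). The combined continuum field satisfies the homogeneous Neumann condition on $\partial\Omega$ to leading order, so the boundary residual gains an extra factor of decay and its dual norm does tend to zero as $L\to\infty$; the image core lies outside $\Omega$, so properties (1) and (2) are unaffected. With that predictor your remaining steps --- uniform coercivity of the half-space Hessian (modulo the extension/cut-off bookkeeping needed to invoke \eqref{eq:ellipticity} for test functions on $\Omega$) and the preservation of the Burgers vector under a small corrector --- go through as you describe.
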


\subsection{Regularity}
\label{sec:regularity}
The globally stable equilibrium configuration $y$, whose existence we
established in Theorem \ref{th:ex_nec_disl} is of the form $y = \yh +
u$, where $\yh(\xi) = \frac{1}{2\pi} \arctan(\frac{\xi_2}{\xi_1})$ is
the continuum linearised elasticity solution for a screw
dislocation. We refer to \S\ref{sec:Ediff} and in particular to
Theorem \ref{th:minim_E} for further details.

This fact implies that only a finite amount of energy is stored in the
dislocation core, and that, up to some fixed prescribed error
tolerance, the linearised elasticity displacement field is accurate
outside of some fixed radius. These observations give rise to new
points of view on the concepts of dislocation {\em core energy} and
{\em core radius}, which we explore in \cite{core}.  In particular
the core radius is an interesting concept related to
the decay of the ``corrector'' $u$ to the configuration $\yh$
predicted by linear elasticity. Here, we state a {\em regularity
  result} proven in more general form in \cite{defects}, which
precisely quantifies the rate of decay of $Du$. In effect, the results
states that the decay of $Du$ is the same as predicted by linearised
elasticity.

\begin{proposition}
  Let $y = \yh + u, u \in \Hsi$ be a locally stable equilibrium, then
  there exists $C$ such that
  \begin{equation}
    \label{eq:decay_rate}
    |Du_b| \leq C\d{b}^{-2} \qquad \forall b \in \Bonds.
  \end{equation}
\end{proposition}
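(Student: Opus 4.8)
\emph{Proof strategy.} The plan is to turn local stability into a discrete Euler--Lagrange equation for $u$, recognise it as a perturbed discrete Poisson equation on the triangular lattice $\L$, and read off the decay rate by a bootstrap. Since $u$ locally minimises $\E$ over $u+\Usz$, stationarity gives
\begin{equation}
  \label{eq:prop-EL}
  \sum_{b\in\Bonds}\psi'(Dy_b)\,Dv_b=0\qquad\text{for all }v\in\Usz,
\end{equation}
each such sum being finite. By $1$-periodicity of $\psi'$ one may replace $Dy_b$ by $\alpha^y_b$ for any $\alpha^y\in[Dy]$; fixing the far-field representative $\alpha^{\yh}\in[D\yh]$ coming from the smooth single-valued differential of $\yh$, one has $|\alpha^{\yh}_b|\lesssim\d{b}^{-1}$ and $\alpha^y_b=\alpha^{\yh}_b+Du_b$ outside a fixed bounded core region. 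Using ($\psi$2) and ($\psi$4) (so that $\psi'(0)=\psi'''(0)=0$, $\psi''(0)=\mu$), write $\psi'(t)=\mu t+R(t)$ with $|R(t)|\lesssim|t|^3$ and $\psi''(t)-\mu=O(t^2)$; substituting into \eqref{eq:prop-EL} and moving all lower-order contributions to the right-hand side yields
\begin{equation}
  \label{eq:prop-poisson}
  \mu\sum_{b\in\Bonds}Du_b\,Dv_b=\sum_{b\in\Bonds}f_b\,Dv_b\qquad\text{for all }v\in\Usz,
\end{equation}
where $f$ collects: (i) the consistency residual, i.e.\ the functional $v\mapsto-\mu\sum_b\alpha^{\yh}_b Dv_b$ (together with the contribution of $R(\alpha^{\yh}_b)$), which is nonzero only because $\yh$ solves the \emph{continuum}, not the discrete, equilibrium equation; (ii) the variable-coefficient defect $-(\psi''(\alpha^{\yh}_b)-\mu)Du_b$; (iii) the nonlinear remainder $-\big(\psi'(\alpha^{\yh}_b+Du_b)-\psi'(\alpha^{\yh}_b)-\psi''(\alpha^{\yh}_b)Du_b\big)$; and a compactly supported term accounting for the finitely many core bonds (where $\alpha^{\yh}$ need not be small and $\psi$ need not be smooth).

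I would then estimate $f$ using that $\yh$ is harmonic away from $0$ with $|\nabla^k\yh(x)|\lesssim|x|^{-k}$: a Taylor expansion of the finite differences in (i), in which odd-order terms cancel by the antipodal symmetry of the nearest-neighbour stencil on $\L$, shows that (i) is represented by a $1$-form with $|f^{(\mathrm{i})}_b|\lesssim\d{b}^{-3}$; term (ii) is bounded by $\d{b}^{-2}|Du_b|$ since $\psi''$ is even; and term (iii) by $|Du_b|^2$ in the far field. Hence $|f_b|\lesssim\d{b}^{-3}+\d{b}^{-2}|Du_b|+|Du_b|^2$ for $\d{b}$ large.

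The remaining ingredient is discrete elliptic regularity for the homogeneous operator on the left of \eqref{eq:prop-poisson} (i.e.\ $\mu$ times the natural discrete Laplacian on $\L$): its solution operator $\mG$ obeys a Calder\'on--Zygmund-type bound $|D_bD_{b'}\mG(b,b')|\lesssim(1+\dist(b,b'))^{-2}$, and the corresponding discrete convolution estimate turns data decaying like $(1+\d{b})^{-\gamma}$ with $\gamma>2$ into a solution decaying like $(1+\d{b})^{-2}$. The bootstrap then runs in three steps: (0) $Du\in\ell^2(\Bonds)$ and local discrete elliptic regularity for \eqref{eq:prop-poisson} give $\sup_{\d{b}\ge R}|Du_b|\to0$; (1) fixing $R$ so this supremum lies below a small $\eps_0$, an absorption argument -- treating (ii) as a decaying perturbation of the operator and (iii) as lower-order data -- yields an intermediate rate $|Du_b|\lesssim\d{b}^{-s}$ for some fixed $s\in(1,2)$; (2) with that rate, terms (ii) and (iii) decay faster than $\d{b}^{-3}$, so $|f_b|\lesssim\d{b}^{-3}$ globally, whence the convolution estimate with $\gamma=3$ closes the argument at $|Du_b|\lesssim\d{b}^{-2}$. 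I expect the real work to be in the analytic inputs of step (0) and in the kernel and convolution estimates for $\mG$ -- the local and far-field regularity theory for discrete elliptic equations on $\L$, including the Liouville-type uniqueness needed to justify the representation $Du=\mu^{-1}\mG f$ -- together with keeping constants uniform across the core-to-far-field transition; these are precisely the ingredients developed in \cite{defects}, while the Taylor expansions and convolution iteration above are routine.
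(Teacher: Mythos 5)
First, a point of comparison: this paper does \emph{not} prove the proposition --- it is stated as a result ``proven in more general form in \cite{defects}'', so there is no in-paper argument to measure you against. Your outline (pass to the Euler--Lagrange equation, linearise about $\yh$, treat the result as a perturbed discrete Poisson equation, and combine lattice Green's function estimates with a bootstrap) is indeed the strategy of that cited work, so in spirit you have reconstructed the intended proof rather than an alternative to anything here. Two smaller caveats before the main one: for general $\psi\in C^4(\R\setminus(\Z+\smfrac12))$ you must first rule out bonds with $Dy_b\in\smfrac12+\Z$ before the stationarity identity can even be written (the paper's Discrete ellipticity lemma does this only for the piecewise quadratic potential), and the Liouville/uniqueness input needed to justify the Green's function representation is, as you say, outsourced.

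The concrete gap is in your decay bookkeeping for the consistency residual. What decays like $\d{b}^{-3}$ is the \emph{force} $f(\xi)=\sum_{b\in\Rg_\xi}\psi'(\alh_b)$, i.e.\ the lattice divergence of the stress (Lemma \ref{th:decay_frc_yh}); the natural divergence-form representative, $b\mapsto\psi'(\alh_b)\approx\mu\alh_b$, decays only like $\d{b}^{-1}$, and the midpoint-rule correction via Lemma \ref{th:alhb_intDyh} leaves behind a sampled continuum field whose smallness is again only visible through its divergence. Your claim that term (i) ``is represented by a $1$-form with $|f^{(\mathrm i)}_b|\lesssim\d{b}^{-3}$'' is therefore unjustified: cheap re-summations of the force field along paths give a $1$-form decaying like $\d{b}^{-1}$ (or $\d{b}^{-2}$ after exploiting the reflection antisymmetry of $f$), and with that input your Calder\'on--Zygmund convolution yields only $\d{b}^{-1}$, respectively $\d{b}^{-2}\log\d{b}$ --- it does not close at the clean rate $\d{b}^{-2}$ asserted in the proposition. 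Likewise, step (1) of your bootstrap --- upgrading $Du\in\ell^2(\Bonds)$ to a first algebraic pointwise rate $\d{b}^{-s}$, $s>1$ --- is the heart of the matter, and ``absorption'' names it without proving it; this is exactly where the machinery of \cite{defects} does its real work. So: right approach, correct identification of the ingredients, but as a standalone proof it has one intermediate claim that is wrong as stated and one essential step that is only labelled; at the level of detail given, it is not more of a proof than the paper's citation.
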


\begin{remark}
  One may expect, and numerical simulations confirm this, that the
  corrector $u$ satisfies the three-fold symmetry of the lattice $\L$
  with respect to its origin (recall that the origin lies in the
  barycentre of a cell). Exploiting this symmetry, one can observe
  that the decay rate is in fact $|Du_b| \leq C \d{b}^{-4}$. However,
  as soon as the symmetry is broken, for example by applying a small
  shear displacement at infinity, or by moving the core off the centre
  of the cell, the generic rate \eqref{eq:decay_rate} is observed
  also numerically.
\end{remark}

\section{Analysis of the energy difference functional}
\label{sec:Ediff}

\subsection{Extension of the energy difference functional}
\label{sec:ext_Ediff}
We fix a displacement $\yh$ and define the functional $\E(u) :=
E(\yh+u; \yh)$. For $u \in \Usz$ this is always well-defined.  If
$D\yh_b \in \R \setminus (\Z + B(\epsilon))$ for all $b \in \Bonds$,
where $\epsilon > 0$, then the first and second variations (in the
sense of directional derivatives) are also well-defined, and given by
\begin{align}
  \label{eq:defn_delE}
  \< \del\E(0), v \> &= \sum_{b \in \Bonds} \psi'(D\yh_b)
  \cdot Dv_b, \quad \text{ for } v \in \Usz, \quad \text{and} \\
  \< \ddel\E(0) v, w \> &= \sum_{b \in \Bonds} \psi''(D\yh_b)
  \cdot Dv_b Dw_b \quad \text{for } v,w \in \Usz.
\end{align}
$\ddel\E(0)$ can clearly be extended by continuity to $v, w \in \Hsi$,
but this is less obvious for $\del\E(0)$ or for $\E$ itself. We first
state a general result.

\begin{lemma}
  \label{th:abstract_E_extension}
  Let $\yh \in \Us$ satisfy $D\yh_b \in \R \setminus
    (\Z+B(\epsilon))$ for some $\epsilon > 0$ and suppose that
  $\del\E(0)$ is a bounded linear functional ($\< \del\E(0), v \> \leq
  C \| D v \|_2$ for all $v \in \Usz$).  Then, $\E : \Usz \to \R$ is
  continuous with respect to the norm $\|D \cdot \|_2$; hence, there
  exists a unique continuous extension of $\E$ to $\Hsi$.
\end{lemma}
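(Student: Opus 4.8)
The plan is to Taylor-expand the summands of $\E$ about the reference strains $D\yh_b$, to peel off the linear part (which the hypothesis already controls), and to show that the resulting quadratic remainder is well behaved on $\Hsi$. For $b\in\Bonds$ set
\[
  r_b(t) := \psi(D\yh_b + t) - \psi(D\yh_b) - \psi'(D\yh_b)\,t ,
\]
so that $r_b(0)=r_b'(0)=0$; since for $u\in\Usz$ every sum below is finite, this gives
\[
  \E(u) = \sum_{b\in\Bonds}\b[\psi(D\yh_b+Du_b)-\psi(D\yh_b)\b]
        = \<\del\E(0),u\> + R(u), \qquad R(u):=\sum_{b\in\Bonds} r_b(Du_b).
\]
By hypothesis $\<\del\E(0),\cdot\>$ is bounded with respect to $\|D\cdot\|_2$ and hence extends continuously to $\Hsi$, so everything reduces to the remainder $R$.

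The crux is a \emph{uniform quadratic bound}: there is a constant $C_\ast$, independent of $b$, with $|r_b(t)|\le C_\ast t^2$ for all $b\in\Bonds$ and all $t\in\R$. To prove it I would use the hypothesis on $\yh$ to fix $\rho>0$ with $\dist(D\yh_b,\Z+\smfrac12)\ge\rho$ for every $b$ (this is precisely what makes $\psi$, and with it $\del\E(0)$ and $\ddel\E(0)$, well-defined and uniformly smooth at the $D\yh_b$). Since $\psi$ is $1$-periodic and lies in $C(\R)\cap C^4(\R\setminus(\Z+\smfrac12))$, it is bounded on $\R$, and $\psi',\psi''$ are bounded on $S:=\{r\in\R:\dist(r,\Z+\smfrac12)\ge\rho/2\}$. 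For $|t|\le\rho/2$ the segment $D\yh_b+[0,t]$ lies in $S$, so Taylor's formula with integral remainder gives $|r_b(t)|\le\smfrac12\big(\sup_S|\psi''|\big)\,t^2$; for $|t|>\rho/2$ one uses only $D\yh_b\in S$ to get $|r_b(t)|\le 2\|\psi\|_{L^\infty(\R)}+\big(\sup_S|\psi'|\big)|t|$, which is absorbed into $C_\ast t^2$ via $|t|\le(2/\rho)\,t^2$. \textbf{This two-regime estimate is the only genuine obstacle}: it is forced by the non-smoothness of $\psi$ on $\Z+\smfrac12$, which rules out a single global Taylor bound once $|t|$ is large.

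Granting the bound, define $\hat\E:\Hsi\to\R$ by $\hat\E(u):=\<\del\E(0),u\>+\sum_{b\in\Bonds}r_b(Du_b)$. The series converges absolutely, since $\sum_b|r_b(Du_b)|\le C_\ast\|Du\|_2^2<\infty$, and $\hat\E=\E$ on $\Usz$ by the identity displayed above. It remains to check that $\hat\E$ is continuous on $\Hsi$ with respect to $\|D\cdot\|_2$. The linear part is continuous by assumption; for $R$, let $Du^n\to Du$ in $\ell^2(\Bonds)$, so $Du^n_b\to Du_b$ for each $b$ and therefore $r_b(Du^n_b)\to r_b(Du_b)$ by continuity of $r_b$. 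With $g_n(b):=C_\ast(Du^n_b)^2$ one has $|r_b(Du^n_b)|\le g_n(b)$, $g_n(b)\to C_\ast(Du_b)^2$ for each $b$, and $\sum_b g_n(b)=C_\ast\|Du^n\|_2^2\to C_\ast\|Du\|_2^2=\sum_b C_\ast(Du_b)^2$; the generalised dominated convergence theorem (Fatou applied to $g_n(b)\pm r_b(Du^n_b)\ge0$) then yields $R(u^n)\to R(u)$. Hence $\hat\E$ is continuous on $\Hsi$, its restriction $\E$ is continuous on $\Usz$, and — since $\Usz$ is dense in $\Hsi$ by \cite[Prop.~9]{OrtSha:interp:2012} — $\hat\E$ is \emph{the} unique continuous extension of $\E$ to $\Hsi$, which is the assertion.
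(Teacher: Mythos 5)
Your proposal is correct, and it executes the extension in a genuinely more explicit way than the paper. Both arguments start from the same decomposition, peeling off the linear part $\<\del\E(0),u\>$ and leaving the remainder $R(u)=\sum_b r_b(Du_b)$ with $r_b(t)=\psi(D\yh_b+t)-\psi(D\yh_b)-\psi'(D\yh_b)t$. The paper then shows only that $\E$ is continuous on $\Usz$ with respect to $\|D\cdot\|_2$, using $\|Dv\|_\infty\leq\|Dv\|_2$ together with the smoothness of $\psi$ near the values $D\yh_b$ (the argument is borrowed from Theorem 2.8(ii) of the Ortner--Theil reference), and appeals to density of $\Usz$ in $\Hsi$ for the extension. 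You instead prove the global two-regime bound $|r_b(t)|\leq C_* t^2$ for all $t\in\R$ --- Taylor with integral remainder when $|t|\leq\rho/2$, using that $D\yh_b$ stays a uniform distance $\rho$ from the singular set of $\psi$, and periodicity/boundedness of $\psi$ and of $\psi'$ on that set when $|t|>\rho/2$ --- and then define the extension by the explicit formula (which is exactly the paper's \eqref{eq:explicit_E_extension}) on all of $\Hsi$, verifying its continuity there by a generalised dominated convergence argument. What your route buys is twofold: it produces the explicit extension that the paper later uses anyway, and it settles the \emph{existence} of a continuous extension directly, rather than inferring it from continuity on a dense subspace --- a step which for a nonlinear functional strictly requires some local uniform continuity (which the quadratic structure does supply, but which the paper's one-line "hence" glosses over). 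The paper's version is shorter given the cited precedent; yours is self-contained. One small remark: you read the hypothesis as $\dist(D\yh_b,\Z+\smfrac12)\geq\rho$ uniformly in $b$, i.e.\ uniform separation from the non-smooth set of $\psi$, whereas the statement literally writes $\Z+B(\epsilon)$; your reading is the one under which $\psi'(D\yh_b)$ and the variations are even defined, and it is what the paper's own proof uses, so this is the intended interpretation rather than a gap.
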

\begin{proof}
  The proof of this result is analogous to the proof of Theorem 2.8
  (ii) in \cite{OrtnerTheil:CauchyBorn}. For convenience we give a
  brief outline.

  For $u \in \Usz$ it is easy to see that
  \begin{align*}
    \E(u) &= \sum_{b \in \Bonds} \b[ \psi(D\yh_b + Du_b) -
    \psi(D\yh_b) - \psi'(D\yh_b) Du_b \b] + \sum_{b \in \Bonds}
    \psi'(D\yh_b) Du_b.
  \end{align*}
  Since we assume that $\del\E(0)$ is a bounded functional, the
  second term on the right-hand side is
  continuous. Using the fact that $\| Dw\|_\infty \leq \| D w
    \|_2$, the smoothness of $\psi$, and the fact that each summand in
    the first group is effectively quadratic in $Du_b$, it is easy to
  show that the second term on the right-hand side is continous as
  well.
\end{proof}

For future reference, we now derive a simple condition on $\yh$ under
which $\del\E(0)$ is a bounded functional.  Applying summation by
parts to \eqref{eq:defn_delE} we obtain
\begin{equation}
  \label{eq:delE_frc}
  \< \del\E(0), v \> = \sum_{\xi \in \L} f(\xi) \cdot v(\xi), \quad
  \text{where} \quad f(\xi) := \sum_{b\in\Rg_\xi}\psi'(D\yh_b(\xi))
\end{equation}
is the force acting on atom $\xi$ under the displacement $\yh$. The
following result states that, if $\yh$ is sufficiently close to
equilibrium in the far-field, then $\del\E(0)$ is a bounded linear
functional.

\begin{lemma}
  \label{th:delE_farfield_equ}
  Suppose that a displacement $\yh$ has associated forces $f(\xi)$
  satisfying the bound $|f(\xi)| \leq C_1 (1+|\xi|)^{-t}$ for some $t
  > 2$, then $\< \del\E(0), v \> \leq C_2 \| D v \|_2$ for all
  $v \in \Usz$.
\end{lemma}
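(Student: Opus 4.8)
The plan is to bound the linear functional $\langle \delta\E(0), v\rangle = \sum_{\xi\in\L} f(\xi)\cdot v(\xi)$ by $C_2\|Dv\|_2$, which amounts to a discrete analogue of the statement that $\ell^p$-decaying forces with $p$ summable against a discrete Sobolev space define bounded functionals on $\Hsi$. The key analytic input is a discrete embedding/Poincar\'e-type inequality: since $v\in\Usz$ satisfies $v(\xi_0)=0$, one can write $v(\xi)$ as a sum of finite differences $Dv_b$ along a path $\Gamma_\xi$ from $\xi_0$ to $\xi$, and then control $|v(\xi)|$ by $\|Dv\|_2$ times a factor growing like $|\xi|^{1/2}$ (in two dimensions), or more precisely one uses a weighted estimate. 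So the first step is to recall (or quote from \cite{OrtSha:interp:2012} or prove directly) a bound of the form $|v(\xi)| \le C\,(1+|\xi|)^{1/2}\,(\log(2+|\xi|))^{1/2}\|Dv\|_2$ or a suitable variant; even a cruder bound $\|v\|_{\ell^\infty} \lesssim$ (something) $\|Dv\|_2$ localised over annuli will do.

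Next I would split the sum over $\L$ dyadically into annuli $A_k := \{\xi : 2^k \le |\xi| < 2^{k+1}\}$. On each annulus the force satisfies $|f(\xi)| \le C_1 2^{-kt}$, and the number of lattice points in $A_k$ is $O(2^{2k})$. Using Cauchy--Schwarz on $A_k$,
\begin{displaymath}
  \Big| \sum_{\xi\in A_k} f(\xi)\, v(\xi) \Big|
  \le \Big(\sum_{\xi\in A_k} |f(\xi)|^2\Big)^{1/2}
      \Big(\sum_{\xi\in A_k} |v(\xi)|^2\Big)^{1/2}
  \le C\, 2^{-kt}\, 2^{k}\, \Big(\sum_{\xi\in A_k} |v(\xi)|^2\Big)^{1/2},
\end{displaymath}
and then estimate $\big(\sum_{\xi\in A_k}|v(\xi)|^2\big)^{1/2}$ by $C\,2^{k(1+\epsilon)}\|Dv\|_2$ using the discrete Poincar\'e bound (the exponent depending on the precise growth rate of $v$; a polynomial bound of degree slightly above $1$ suffices). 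Summing over $k$ gives a geometric series with ratio $2^{-(t-2-\epsilon)} < 1$ as long as $t > 2$ and $\epsilon$ is chosen small enough, so the total is bounded by $C_2\|Dv\|_2$ with $C_2$ depending on $C_1$ and $t$.

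The main obstacle is getting the discrete Poincar\'e/growth estimate with a good enough exponent: a naive path-summation bound gives $|v(\xi)| \lesssim |\xi|\,\|Dv\|_2$ (summing $|\xi|$ terms each bounded by $\|Dv\|_\infty \le \|Dv\|_2$), which when squared and summed over $A_k$ yields growth $2^{3k}$ and forces the requirement $t>3$ rather than $t>2$. To reach the sharp $t>2$ one must exploit that the differences along the path are $\ell^2$-summable, not just $\ell^\infty$-bounded; averaging over a fan of $O(|\xi|)$ disjoint-ish paths from $\xi_0$ to points in $A_k$ (or invoking the two-dimensional discrete Sobolev inequality $\|v\|_{\ell^2(A_k)} \lesssim |A_k|^{1/2}\cdot(\text{growth})\cdot\|Dv\|_2$ with the borderline logarithmic loss) recovers the needed exponent. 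Since a clean statement of this kind is exactly the content of \cite[Prop.~9 and surrounding estimates]{OrtSha:interp:2012}, I would cite that for the growth bound and present only the dyadic summation in detail; the remaining steps are then routine.
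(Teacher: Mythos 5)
Your overall strategy is sound and, at its core, rests on exactly the same external input as the paper: a subpolynomial growth estimate for $v\in\Usz$ in terms of $\|Dv\|_2$, imported from \cite{OrtSha:interp:2012}. The paper uses the pointwise bound $|v(\xi)|\leq C\log(|\xi|+2)\,\|Dv\|_2$ (Proposition~12 there, not Proposition~9, which concerns the Hilbert-space structure of $\Hsi$) and then concludes in one line by the weighted H\"older pairing $|\<\del\E(0),v\>|\leq \|\log(|\xi|+2)f\|_1\,\|v/\log(|\xi|+2)\|_\infty$, with $\|\log(|\xi|+2)f\|_1<\infty$ precisely because $t>2$. Your dyadic-annulus decomposition with Cauchy--Schwarz on each $A_k$ is a legitimate alternative packaging of the same estimate: with the logarithmic growth bound one gets $\|v\|_{\ell^2(A_k)}\lesssim 2^k k\,\|Dv\|_2$, the annulus contribution is $\lesssim 2^{-k(t-2)}k\,\|Dv\|_2$, and the series converges for every $t>2$. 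So, granted the right growth estimate, your argument closes; it is simply a more laborious route to the same conclusion.

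The one genuine soft spot is quantitative: the growth bound you offer in your first step, $|v(\xi)|\lesssim (1+|\xi|)^{1/2}(\log(2+|\xi|))^{1/2}\|Dv\|_2$ (the single-path Cauchy--Schwarz bound), is \emph{not} strong enough. It gives $\|v\|_{\ell^2(A_k)}\lesssim 2^{3k/2}\|Dv\|_2$ (up to logarithms), so your dyadic sum behaves like $\sum_k 2^{-k(t-5/2)}$ and forces $t>5/2$ rather than the claimed $t>2$; your asserted annulus estimate $\|v\|_{\ell^2(A_k)}\lesssim 2^{k(1+\epsilon)}\|Dv\|_2$ is equivalent to $v$ having subpolynomial (in fact logarithmic) growth, which single-path $\ell^2$-summability alone does not deliver. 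The mechanism that does deliver it is the BMO-type argument you allude to at the end (averaging over a fan of paths, or summing annulus oscillations with Cauchy--Schwarz across dyadic scales), and this is exactly the content of the cited Proposition~12. Since you ultimately defer to that reference, the gap is reparable, but as written the first step of your proof states a bound that cannot yield the stated threshold $t>2$.
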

\begin{proof}
  Proposition 12 in \cite{OrtSha:interp:2012} immediately implies that
  \begin{equation}
     \b\| \smfrac{v}{\log(|\xi|+2)} \b\|_\infty
    \leq C  \| D v \|_2,\label{eq:BMO.est}
  \end{equation}
  for some constant $C > 0$. (This inequality is essentially a
  consequence of the embedding $\| v \|_{\rm BMO} \leq C \| \nabla v
  \|_{L^2}$ for $v \in C^1(\R^2)$.)
  
  We can therefore estimate
  \begin{align*}
    \b|\< \del\E(0), v\>\b| \leq \sum_{\xi \in \L} |f(\xi)|\, |v(\xi)|
    \leq  \b\| \log(|\xi|+2) f \b\|_1 \, \b\|
    \smfrac{v}{\log(|\xi|+2)} \b\|_\infty.
  \end{align*}
  The assumption $|f(\xi)| \leq C |\xi|^{-t}$ with $t > 2$ implies
  that $\| \log(|\xi|+2) f \|_1$ is finite. 
\end{proof}



\subsection{The reference displacement}
\label{sec:ref_def}
We now specify the reference displacement $\yh$ used in the definition
of $\E$ in \S\ref{sec:ext_Ediff}. It is best to think of $\yh$ as
prescribing a far-field boundary condition $y(\xi) \sim \yh(\xi)$ as
$|\xi| \to \infty$. We wish to choose $\yh$ in such a way that it
enforces a geometrically necessary dislocation, and at the same time
satisfies the condition of Lemma \ref{th:delE_farfield_equ}. 

A natural choice is the dislocation displacement field from linear
elasticity theory.
Since it is instructive (though not essential to our proofs) we give a brief
motivation of this construction. In the far-field, we expect that continuum
linearized elasticity theory is a good approximation to the atomistic
equilibrium condition $\del\E(0) =0$. This can be formalized by first
deriving the Cauchy--Born approximation and then linearising it. Due to
the hexagonal symmetry of $\L$ one finds that the linearised continuum
approximation is simply Laplace's equation, $\Delta \yh(x) = 0$.

\begin{figure}
  \includegraphics[height=5cm]{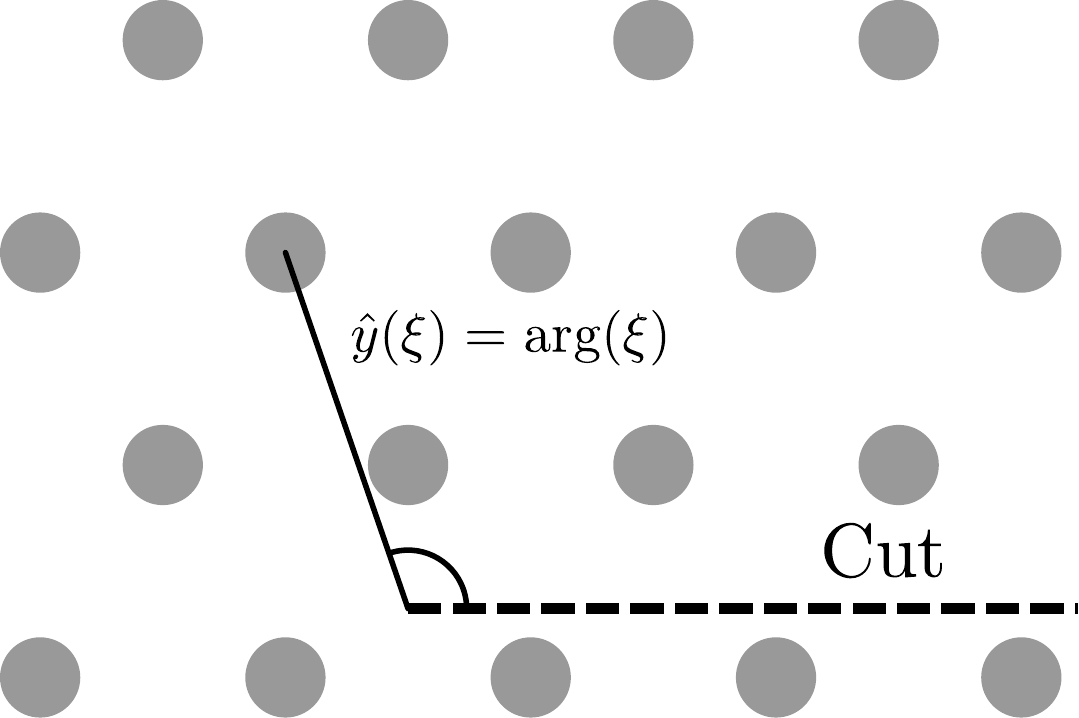}
  \caption{An illustration of the coordinate system and reference configuration
  chosen.}
  \label{fig:hatydefn}
\end{figure}

Hence, following Section 3-2 in Hirth \& Lothe \cite{HirthLothe}, we define
$\yh$ as follows:
\begin{equation}
  \label{eq:defn_yh}
  \yh(x):=\smfrac{1}{2\pi}\arg(x)
  =\smfrac1{2\pi}\arctan\b(\smfrac{x_2}{x_1}\b),
\end{equation}
where we identify $x \in \R^2$ with the point $x_1+i x_2 \in
\C$, and the branch cut is taken along the positive $\xi_1$-axis, as shown in
Figure \ref{fig:hatydefn}.

The gradient (away from the branch cut) is given by
\begin{equation}
  \label{eq:defn_Dyh}
  \D\yh(x) = \Big(\frac{-x_2}{2\pi r^2},
  \frac{x_1}{2\pi r^2}\Big)^T,
\end{equation}
where $r := |x|$. This function can be extended to a function in
$C^\infty(\R^2 \setminus \{0\})$, which we take as the {\em
  definition} of $\D\yh$ from now on. Moreover, we can check that
indeed $\Delta \yh(x) = {\rm div} (\nabla \yh(x)) = 0$, in the pointwise
sense, for $x \neq 0$.

Let $\alh = (\alh_b)_{b \in \Bonds}$ be a bond-length 1-form
associated with $\yh$; we claim this is unique, and the following lemma provides
a convenient formula for $\alh_b$ in terms of $\D \yh$.

\begin{lemma}
  \label{th:alhb_intDyh}
  Let $\alh \in [D\yh]$ then for any bond $b=(\xi,\xi+a_i) \in
  \Bonds$, we have
  \begin{equation}
    \label{eq:alhb_intDyh}
    \alh_b = \int_0^1 \D\yh\big(\xi+ta_i\big)\cdot a_i \dt.
  \end{equation}
\end{lemma}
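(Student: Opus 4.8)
The plan is to show that the right-hand side of \eqref{eq:alhb_intDyh} defines a bond-length 1-form associated with $\yh$, and then invoke uniqueness. Concretely, set $\beta_b := \int_0^1 \D\yh(\xi+ta_i)\cdot a_i \dt$ for $b = (\xi,\xi+a_i)$. The first task is to check the symmetry condition $\beta_{-b} = \beta_b$: since $-b = (\xi+a_i, \xi)$ is parametrised by $t \mapsto (\xi+a_i) + t(-a_i)$, the substitution $t \mapsto 1-t$ immediately gives $\beta_{-b} = \int_0^1 \D\yh(\xi+(1-t)a_i)\cdot a_i\dt = \beta_b$. (One should note that the line segment $\conv\{\xi,\xi+a_i\}$ avoids the origin for every bond, since $0$ is the barycentre of the cell $C_0$ and bonds do not pass through cell barycentres, so the integrand is well-defined and smooth along the path of integration.)

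Next I would verify that $D\yh_b - \beta_b \in \Z$. This is where the branch cut enters. Away from the cut, $\yh$ is a genuine $C^\infty$ function with gradient $\D\yh$, so the fundamental theorem of calculus gives $\int_0^1 \D\yh(\xi+ta_i)\cdot a_i\dt = \yh(\xi+a_i) - \yh(\xi) = D\yh_b$ whenever the segment $\conv\{\xi,\xi+a_i\}$ does not cross the positive $\xi_1$-axis. For a bond whose segment does cross the cut, the path-integral of $\D\yh$ and the finite difference $D\yh_b$ differ exactly by the jump of $\yh$ across the cut; since $\yh = \frac1{2\pi}\arg$, this jump is $\pm 1$ (the total variation of $\frac1{2\pi}\arg$ around the origin). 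Hence $D\yh_b - \beta_b \in \{-1,0,1\} \subset \Z$ in all cases.

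The remaining point — and the only mildly delicate one — is the bound $\beta_b \in [-1/2,1/2]$. Here I would use the explicit formula \eqref{eq:defn_Dyh}: $\beta_b = \frac1{2\pi}\int_0^1 \frac{(\xi+ta_i)^\perp \cdot a_i}{|\xi+ta_i|^2}\dt$, and this integral is precisely $\frac1{2\pi}$ times the signed angle subtended at the origin by the segment from $\xi$ to $\xi+a_i$. Since that segment has length $1$ and stays a bounded distance from the origin (the minimal distance $\d{b}$ from any bond to the origin is bounded below by a positive geometric constant, with the worst cases being the bonds of the cell $C_0$ adjacent to the origin), the subtended angle is at most $\pi$ in absolute value, giving $|\beta_b| \le 1/2$. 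A short geometric check on the finitely many bonds nearest the origin confirms the angle never exceeds $\pi$; for all other bonds it is strictly smaller. Therefore $\beta \in [D\yh]$, and the uniqueness claim (which holds because $D\yh_b \notin \frac12+\Z$ for every bond, as $|\beta_b|<1/2$ except possibly in degenerate subtended-angle-$=\pi$ cases that a direct check rules out) forces $\alh_b = \beta_b$, which is \eqref{eq:alhb_intDyh}.

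I expect the main obstacle to be the careful bookkeeping around the branch cut: one must be sure that the $\pm1$ discrepancy between $\beta_b$ and $D\yh_b$ is attributed consistently and that it does not spoil membership in $[D\yh]$, and one must handle the finitely many bonds closest to the origin (and those straddling the cut) by explicit inspection rather than a soft argument. Once those special bonds are dispatched, the general bond is routine via the fundamental theorem of calculus.
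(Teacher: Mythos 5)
Your proposal is correct in substance and rests on the same two ideas as the paper's proof — interpreting the integral as $\smfrac1{2\pi}$ times the angle subtended at the origin by the bond, and invoking uniqueness of the bond-length 1-form — but the bookkeeping differs. The paper first notes that $\D\yh$ and $\alh$ are independent of the branch cut and then \emph{re-chooses} the cut so that it does not intersect the given bond; the fundamental theorem of calculus then gives that the integral equals $D\yh_b$ outright, with no case analysis of cut-crossing bonds, and the subtended angle is bounded sharply by $2\pi/3$ (worst case: both endpoints on the boundary of $C_0$), so $\alh_b\in[-\smfrac13,\smfrac13]$ and uniqueness follows at once. Your fixed-cut variant, tracking the $\pm1$ jump across the cut and using the cruder bound (angle $<\pi$, hence $|\beta_b|<\smfrac12$), also works, but note two points. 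First, no finite ``direct check'' near the origin is needed to exclude the angle-$=\pi$ case: a unit segment subtends an angle $\pi$ at a point only if that point lies on the open segment, and the origin is the barycentre of $C_0$, hence lies on no bond; the resulting \emph{strict} inequality is exactly what gives $D\yh_b\notin\smfrac12+\Z$ and hence uniqueness, so it deserves to be stated rather than deferred to inspection. Second, your symmetry computation drops the direction of traversal: parametrising $-b=(\xi+a_i,\xi)$ one integrates $\D\yh$ against $-a_i$, so the correct identity is $\beta_{-b}=-\beta_b$, i.e.\ antisymmetry, which is the condition compatible with $D\yh_{-b}=-D\yh_b$ (the symmetric condition as printed in \eqref{eq:defn:[Dy]} is evidently a typo); with that sign corrected, your verification that the integral defines the unique element of $[D\yh]$ goes through.
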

\begin{proof}
  By definition, $\D\yh$ is independent of the choice of branch
  cut. Moreover, if the branch cut is chosen differently, then the
  displacement at each site is only changed by an integer, which means
  $\alh$ does not change; hence, $\alh$ is also independent of the
  branch cut.

  Now fix $b = (\xi,\xi+a_i) \in \Bonds$. Since the origin lies at the
  centre of a cell we can redefine $\yh$ with a branch cut that does
  not intersect $b$. The Fundamental Theorem of Calculus gives
  \begin{equation}
    \int_0^1 \D\yh\big(\xi+ta_i\big)\cdot a_i \dt = \yh(\xi+a_i)-\yh(\xi)
    =\smfrac1{2\pi}\b(\arg(\xi+a_i)-\arg(\xi)\b),
  \end{equation}
  and since we have assumed that $b$ is a nearest neighbour bond, it
  has length 1. The term on the right hand side is $1/2\pi$ times the
  angle formed by the points $\xi$, $0$ and $\xi+a_i$, which is
  maximised by making $\xi$ and $\xi+a_i$ as close to the origin as
  possible --- that is, when $\xi$ and $\xi+a_i$ are on the boundary of $C_0$.
  It follows that the angle can be no larger than $\smfrac{2\pi}3$, and hence
  $\alh_b=\yh(\xi+a_i)-\yh(\xi)\in[-\smfrac13,\smfrac13]$. This
  implies that $\alh$ is unique, since $D\yh_b\neq\pm\smfrac12$ for
  all $b\in\Bonds$.
\end{proof}

As an immediate corollary of Lemma \ref{th:alhb_intDyh} we obtain
the following bound on $\alh_b$:
\begin{align}
  \label{eq:alpha.hat.bnd}
  |\alh_b| \leq \frac1{2\pi\d{b}} \qquad \forall b \in \Bonds.
\end{align}
We conclude the analysis of $\yh$ by showing that it satisfies the
conditions of Lemma \ref{th:delE_farfield_equ}.

\begin{lemma}
  \label{th:decay_frc_yh}
  Let $\yh$ be defined by \eqref{eq:defn_yh}, and let $f(\xi), \xi \in
  \L$, be the associated forces (see \eqref{eq:delE_frc}), then
  $\alh\in[D\yh]$ satisfies $\alh_b \in [-1/3, 1/3]$ and 
  \begin{displaymath}
    |f(\xi)| \lesssim |\xi|^{-3} \qquad \forall \xi \in \L.
  \end{displaymath}
  In particular $\yh$ satisfies all conditions of Lemma
  \ref{th:delE_farfield_equ}.
\end{lemma}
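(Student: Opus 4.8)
The plan is to reduce everything to the bond-length $1$-form $\alh$ of $\yh$, and then to combine a Taylor expansion of $\psi'$ near $0$ with the harmonicity of $\yh$.

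First I recall Lemma~\ref{th:alhb_intDyh}: $[D\yh]$ has a unique element $\alh$, it satisfies $\alh_b \in [-1/3,1/3]$, and by \eqref{eq:alpha.hat.bnd} also $|\alh_b| \leq (2\pi\d{b})^{-1}$. In particular $D\yh_b \in \alh_b + \Z$ lies outside $\Z + 1/2$, so $\psi'(D\yh_b)$ is defined, and $1$-periodicity of $\psi$ gives $\psi'(D\yh_b) = \psi'(\alh_b)$. Hence, writing $b_i = (\xi,\xi+a_i)$ for $i=1,\dots,6$, $f(\xi) = \sum_{i=1}^6 \psi'(\alh_{b_i})$. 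Now $\psi$ even implies $\psi'$ odd, so $\psi'(0) = \psi'''(0) = 0$; with $\psi''(0) = \mu$ and $\psi \in C^4$ near $0$, Taylor's theorem gives $|\psi'(s) - \mu s| \leq C|s|^3$ for $|s| \leq 1/3$. Applying this to each $\alh_{b_i}$ and using \eqref{eq:alpha.hat.bnd},
\begin{equation*}
  f(\xi) = \mu\sum_{i=1}^6 \alh_{b_i} + O\Big(\sum_{i=1}^6 \d{b_i}^{-3}\Big),
\end{equation*}
and for $|\xi| \geq 2$ the remainder is $O(|\xi|^{-3})$ since $\d{b_i} \geq |\xi|-1 \geq |\xi|/2$.

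The main point is to show $\sum_{i=1}^6 \alh_{b_i} = O(|\xi|^{-3})$. For $|\xi| \geq 2$ the segment $[\xi,\xi+a_i]$ avoids the origin, so I may Taylor expand the smooth field $\D\yh$ along it; inserting this into \eqref{eq:alhb_intDyh}, integrating in $t$, and summing over $i$, the $\D\yh(\xi)$ terms cancel because $\sum_i a_i = 0$, the $\D^2\yh(\xi)$ terms sum to a multiple of $\sum_i a_i^T \D^2\yh(\xi)\,a_i = 3\,\tr\big(\D^2\yh(\xi)\big) = 3\,\Delta\yh(\xi) = 0$ (using $\sum_i a_i \otimes a_i = 3I$ and that $\yh$ is harmonic away from $0$), and the second-order Taylor remainder is bounded by $C\sup\{|\D^3\yh(z)| : |z|\geq|\xi|-1\} \lesssim |\xi|^{-3}$, since $\D^3\yh$ is homogeneous of degree $-3$ on $\R^2 \setminus \{0\}$. (Carrying the expansion one term further, $\sum_i a_i^{\otimes 3} = 0$ by central symmetry and $\sum_i a_i^{\otimes 4}$ is isotropic by the hexagonal symmetry, which together with $\Delta^2\yh = 0$ even gives $O(|\xi|^{-5})$; but $O(|\xi|^{-3})$ already suffices.) This yields $|f(\xi)| \lesssim |\xi|^{-3}$ for $|\xi| \geq 2$; for the finitely many lattice points with $|\xi| < 2$, $f$ is bounded since each $\psi'(\alh_{b_i})$ lies in the bounded set $\psi'([-1/3,1/3])$, so after enlarging the constant $|f(\xi)| \lesssim |\xi|^{-3}$ holds for all $\xi \in \L$. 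Since $|\xi|$ is bounded below on $\L$, this gives $|f(\xi)| \lesssim (1+|\xi|)^{-3}$, which is precisely the hypothesis of Lemma~\ref{th:delE_farfield_equ} with $t = 3 > 2$.

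I expect the cancellation showing $\sum_i \alh_{b_i} = O(|\xi|^{-3})$ to be the only delicate step; and it is worth stressing that the evenness of $\psi$ — which makes the linearisation error in $\psi'$ cubic rather than quadratic — is what is needed here, since a bound of order $|\xi|^{-2}$ would be too weak for Lemma~\ref{th:delE_farfield_equ}, where $t > 2$ is required.
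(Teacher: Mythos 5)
Your proposal is correct and follows essentially the same route as the paper's proof: reduce to $\alh_b$ by periodicity, Taylor expand $\psi'$ using evenness so the error is cubic, then use the integral formula \eqref{eq:alhb_intDyh}, the symmetry $\sum_i a_i = 0$, $\sum_i a_i \otimes a_i = 3\mathsf{I}$, and harmonicity of $\yh$ to cancel the leading terms. The only (harmless) difference is that you stop the expansion of $\D\yh$ at second order, giving $\sum_i \alh_{b_i} = O(|\xi|^{-3})$ rather than the paper's $O(|\xi|^{-4})$, which still suffices since the $\psi'$ remainder already limits the overall rate to $|\xi|^{-3}$.
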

\begin{proof}
  Recall from \eqref{eq:delE_frc} that
  \begin{displaymath}
    f(\xi) = \sum_{b \in \Rg(\xi)}
    \psi'(D_{b} \yh) = \sum_{b \in \Rg(\xi)}
    \psi'(\alh_b).
  \end{displaymath}
  Taylor expanding $\psi'_{b}$ to third order, using the fact that
  $\psi'(0) = \psi'''(0) = 0$ (since $\psi$ is even about $0$),
  gives
  \begin{align*}
    f(\xi) = \sum_{b\in\Rg(\xi)} \B[ \psi''(0) \alh_b +
    \smfrac16 \psi^{(4)}(s_b)(\alh_b)^3 \B],
  \end{align*}
  for some $s_b \in {\rm conv}\{0, \alh_b\}$. Applying
  \eqref{eq:alpha.hat.bnd} we obtain
  \begin{equation}
    \label{eq:decay_frc_yh:10}
    f(\xi) = \sum_{b\in\Rg(\xi)}\psi''(0) \alh_b + O\b(\d{b}^{-3}\b),
  \end{equation}
  
  We now inspect the sum on the right-hand side of
  \eqref{eq:decay_frc_yh:10} in more detail. Applying
  \eqref{eq:alhb_intDyh} we rewrite this sum as
  \begin{displaymath}
    \sum_{b\in\Rg(\xi)}\psi''(0)\alh_b = \sum_{i =
      1}^6 \psi''(0) 
    \int_0^1 \D\yh\big(\xi+ta_i\big) \cdot a_i \dt.
  \end{displaymath}
  Taylor expanding $\D\yh\big(\xi+ta_i\big)$ and using the fact that
  $|\D^4\yh(x)| \lesssim |x|^{-4}$, we obtain
  \begin{displaymath}
    \int_0^1 \D\yh\big(\xi+ta_i\big) \cdot a_i \dt = \D\yh(\xi) \cdot
    a_i + \smfrac12 \D^2 \yh(\xi) [a_i,a_i] + \smfrac16 \D^3 \yh(\xi)
    [a_i,a_i,a_i] + O\b(|\xi|^{-4} \b).
  \end{displaymath}
  Summing over $i = 1, \dots, 6$ the first and third terms cancel
  since $a_{i+3} = -a_i$, hence we obtain
  \begin{equation}
    \label{eq:decay_frc_yh:20}    
    \sum_{b\in\Rg(\xi)} \psi''(0) \alh_b = \frac12 \sum_{i = 1}^6 a_i^T \D^2 \yh(\xi) a_i + O\b(|\xi|^{-4}\b).
  \end{equation}
  We now observe that
  \begin{displaymath}
    \frac12 \sum_{i = 1}^6 a_i^T \D^2 \yh(\xi) a_i = -\smfrac32 \Delta
    \yh(\xi) = 0.
  \end{displaymath}
  Inserting the last identity into \eqref{eq:decay_frc_yh:20} and
  combining the resulting estimate with \eqref{eq:decay_frc_yh:10} we
  obtain the stated estimate on $|f(\xi)|$.
\end{proof}

\subsection{The variational problem in \texorpdfstring{$\Hsi$}{W12}}
\label{sec:var_problem}
Combining Lemma \ref{th:decay_frc_yh} with Lemma
\ref{th:abstract_E_extension} and Lemma \ref{th:delE_farfield_equ}, we
deduce that $\E(u) := E(\yh + u; \yh)$ is a well-defined and
continuous functional on $\Hsi$, where $\yh$ is the reference
configuration defined in \eqref{eq:defn_yh}. It will later be
convenient to recall from the proof of Lemma
\ref{th:abstract_E_extension} that the explicit definition of the
extension is
\begin{equation}
  \label{eq:explicit_E_extension}
  \E(u) = \sum_{b \in \Bonds} \b[ \psi(\alh_b + Du_b) - \psi(\alh_b)
  - \psi'(\alh_b) Du_b \b] + \< \del\E(0), u \>,
\end{equation}

In the next section, \S~\ref{sec:mainproof}, we will prove the following
result:

\begin{theorem}
  \label{th:minim_E}
  There exists $u \in \Hsi$ such that $\E(u) \leq \E(v)$ for all $v
  \in \Hsi$.
\end{theorem}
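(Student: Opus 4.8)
The plan is to apply the Direct Method of the Calculus of Variations to the functional $\E$ on the Hilbert space $\Hsi$, with the main work being a coercivity estimate that controls the elastic strain and, crucially, the number and separation of dislocation cores along a minimising sequence. First I would take a minimising sequence $u^n \in \Hsi$ with $\E(u^n) \to \inf_{\Hsi} \E =: m$, noting that $m \leq \E(0) = 0$ is finite, and that $m > -\infty$ will follow from the lower bound described next. For each $n$ I would choose an associated bond-length object $\beta^n$ (see the definition signalled at \eqref{eq:defn_beta} in the outline, morally $\beta^n \in [Du^n]$ with the far-field strain $\alh$ subtracted off appropriately) and establish the key inequality $\E(u) \gtrsim \|\beta\|_2^2 - 1$ indicated in step (2) of the proof outline of Theorem \ref{th:ex_nec_disl}. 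This lower bound is exactly where assumption $(\psi5)$ is used, via careful geometric estimates bounding the energy contribution of each dislocation dipole from below in terms of the distance between its cores; it simultaneously yields an a priori bound on the number of cores and a uniform bound $\|\beta^n\|_2 \leq C$.

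Next I would translate the $\ell^2$-bound on $\beta^n$ into a bound $\|Du^n\|_2 \leq C$, using the relation between $\beta^n$, $Du^n$ and $\alh$ together with the decay $|\alh_b| \leq \frac{1}{2\pi \d{b}}$ from \eqref{eq:alpha.hat.bnd} (which is in $\ell^2$ up to the finitely many core cells, where one argues separately). Since $\Hsi$ is a Hilbert space with norm $\|D\cdot\|_2$, a uniform bound gives a weakly convergent subsequence $Du^n \wto Du$ for some $u \in \Hsi$; by Proposition 12 of \cite{OrtSha:interp:2012} (equivalently \eqref{eq:BMO.est}) this also yields pointwise convergence $u^n(\xi) \to u(\xi)$ for every $\xi \in \L$, and hence $Du^n_b \to Du_b$ for every individual bond $b$. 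The final ingredient is weak lower semicontinuity: writing $\E$ in the explicit form \eqref{eq:explicit_E_extension}, the linear term $\<\del\E(0), u\>$ is weakly continuous (it is a bounded linear functional), and the nonlinear sum $\sum_b [\psi(\alh_b + Du_b) - \psi(\alh_b) - \psi'(\alh_b)Du_b]$ is a sum of nonnegative terms (using $(\psi5)$, or at least convexity of $\psi$ near $0$ away from the finitely many cores) each of which is continuous under the bondwise convergence, so Fatou's lemma gives $\E(u) \leq \liminf_n \E(u^n) = m$. Therefore $u$ is a minimiser.

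The step I expect to be the main obstacle is the global lower bound $\E(u) \gtrsim \|\beta\|_2^2 - 1$. Unlike the rest of the argument, which is the standard Direct Method once compactness and lower semicontinuity are in place, this estimate must grapple head-on with the lattice-symmetry invariance that makes the problem non-coercive: a dipole of two opposite cores can be inserted at arbitrarily large separation at bounded energy cost, so one cannot hope for genuine coercivity of $\E$ in $\|Du\|_2$ without subtracting the contribution of the cores. The delicate point is to show that the elastic energy surrounding a dipole grows (logarithmically, as in the continuum) with the core separation, so that only finitely many dipoles, each of bounded separation, can be present in a finite-energy configuration — this is what the ``$-1$'' on the right-hand side absorbs, and what $(\psi5)$ is tailored to make rigorous. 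I would structure this as: (i) localise $\alpha$ (or $\beta$) near cores using Remark \ref{rem:alpha_ambig} to clean up adjacent cores; (ii) for each dipole, estimate from below the $\ell^2$-mass of $\beta$ on a corridor of bonds separating the two cores, using that the Burgers vector forces a nontrivial circulation; (iii) sum over dipoles and combine with the trivial bound $\E(u) \geq \frac{\mu}{2}\|\beta\|_2^2$ on the ``core-free'' part coming from $(\psi5)$. Note that even with this lower bound, it is still \emph{not} automatic that the limit $u$ has net Burgers vector $1$ — that requires the concentration-compactness argument of step (4) — but for Theorem \ref{th:minim_E}, which asserts only the existence of a minimiser of $\E$ over $\Hsi$ (with no constraint on $B[\yh+u]$), the four steps above suffice.
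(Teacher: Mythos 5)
There is a genuine gap, and it sits exactly at the step you pass over most quickly: ``translate the $\ell^2$-bound on $\beta^n$ into a bound $\|Du^n\|_2 \leq C$.'' This implication is false. One has $Du^n = \beta^n + z^n$ where $z^n$ is an integer-valued, compactly supported 1-form (the branch cuts), and the energy, by periodicity of $\psi$, depends on $Du^n$ only through $\beta^n$: adding any compactly supported integer-valued $U$ to $u^n$ leaves $\E$ unchanged (vertical shift invariance) but changes $z^n$ by $DU$, so $\|Du^n\|_2$ can be arbitrarily large at fixed energy. More insidiously, a minimising sequence can contain a dipole whose two cores separate, dragging behind it a branch cut of diverging length, with $\|\beta^n\|_2$ bounded but $\|Du^n\|_2\to\infty$. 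This is precisely why the paper states that \emph{generic minimising sequences cannot be weakly compact}, and why, even for Theorem \ref{th:minim_E} (with no constraint on the Burgers vector), it does not run the plain Direct Method: it first \emph{constructs} a special minimising sequence by exploiting the invariances --- vertical shifts to enforce the \DMCP~\eqref{eq:DMCP}, the straight-cuts normalisation of Lemma \ref{th:straight_cuts}, and recentring at a positive core to enforce the \DOCP~\eqref{eq:DOCP} --- and then proves, via the concentration-compactness argument of Lemma \ref{th:coercivity}, that $\|z^n\|_1$ stays bounded along this modified sequence. Only then does weak compactness in $\Hsi$ follow. So the concentration-compactness machinery is not, as your last sentence suggests, needed only to preserve $B[y]=1$; it is needed for the existence statement itself.

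The same missing control on $z^n$ also undermines your lower-semicontinuity step. The summands $\psi(\alh_b+Du_b)-\psi(\alh_b)-\psi'(\alh_b)Du_b$ are \emph{not} nonnegative: for $Du_b$ near a nonzero integer $k$ the summand equals $-k\,\psi'(\alh_b)$, which can be negative, and $(\psi5)$ only helps for arguments in $[-\smfrac12,\smfrac12]$. The paper's fix is to use periodicity to rewrite the sum in terms of $\beta^n$ (where \eqref{eq:psi_lb} gives nonnegativity for $\d{b}\geq R_0$ and Fatou applies) at the price of the extra term $\sum_b \psi'(\alh_b) z^n_b$, whose passage to the limit uses weak $\ell^p$ convergence of $z^n$ ($1<p<2$), again resting on the $\ell^1$ bound from Lemma \ref{th:coercivity}. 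Similarly, in the lower bound itself the linear term $\<\del\E(0),u\>$ and the cut term $\sum_b\psi'(\alh_b)z_b$ depend on $u$ and $z$, not just on $\beta$, and bounding them by $\|\beta\|_2$ (Lemmas \ref{th:bnd_delE_beta} and \ref{lem:cut.lwr.bnd}) is exactly where the \DOCP, the straight cuts, and the sharp constant $\arcsinh(2/\sqrt3)/\pi<1/3$ enter; your sketch (i)--(iii) omits these terms. In short, the architecture of the proof must be reorganised around normalising and controlling $z$, not just $\beta$; as written, your compactness and Fatou steps fail.
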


\medskip As an immediate corollary we can now prove Theorem
\ref{th:ex_nec_disl}. 

\begin{proof}[Proof of Theorem \ref{th:ex_nec_disl}]
  Let $y := \yh + u$, where $u$ is a minimizer of $\E$ in
  $\Hsi$. Since $Du \in \ell^2(\Bonds)$ it follows that $|Du_b| \to 0$
  uniformly as $\d{b} \to \infty$. Using also the fact that $|\alh_b|
  \to 0$ uniformly (cf. \eqref{eq:alpha.hat.bnd}), we conclude that
  $\alpha_b = \alh_b + Du_b+z_b$, where $z_b$ is a compactly
  supported, integer-valued 1-form. From the definition of the net
  Burgers vector and from \eqref{eq:B[y]_ind_of_alpha}, it now follows
  immediately that $B[y] = B[\yh] = 1$. Moreover, minimality of $u$
  implies that $y$ is a globally stable equilibrium in the sense of
  Definition~\ref{def:defn_stable_equilib}. 
\end{proof}

Theorem \ref{th:minim_E} is interesting in its own right: it shows
that atomistic configurations containing dislocations can be obtained
as global minimizers of a variational problem formulated over
$\L$. This is particularly useful for further study (e.g., of
regularity; cf. \S~\ref{sec:regularity}) of dislocations in this
model.

We also remark that any local minimizer $u$ of $\E$ in $\Hsi$ would
give rise to a locally stable equilibrium with net Burgers vector
$B(\yh + u) = 1$. The advantage of local minimisers is that they can
be computed numerically.

\section{Proof of Theorem \ref{th:minim_E}}
\label{sec:mainproof}
As currently formulated, it is not obvious that the energy $\E$ is bounded
below, and it is even less clear whether $\E$ is coercive in a sense which would
allow us to invoke the Direct Method. This is due in large part to the fact that
the reference configuration is nonlinear and $\psi$ is periodic, so the
integrand has infinitely many energy wells.

The periodicity of $\psi$ allows the creation of dislocation dipoles
`cheaply'. If
dipoles are well-separated, then each dipole gives a positive
contribution to the energy which is proportional to the logarithm of
the dipole length (the separation distance between the two cores of
the dipole). However, for generic configurations of dipoles the sign
of the energy contribution is difficult to determine, since it depends
strongly upon the relative orientations of the dipoles. In essence,
this is a geometric nonlinearity of the system, and most of the effort
expended in what follows will be to control the number of dipoles that
can form.

From a technical point of view the issue arises as follows: in
\S\ref{sec:bonds} we decomposed $Dy = \alpha+w$, $\alpha \in [Dy]$,
since the energy of the displacement $y$ only depends
on $\alpha$ due to the periodicity of the potential
$\psi$. Consequently, if we have a sequence $u^n$ with $\E(u^n)$
uniformly bounded, then this will bound only $\| \beta^n \|_2$
for $\beta^n \in [Du^n]$, and not $\|Du^n\|_2$. In particular,
generic minimising sequences cannot be weakly compact.

By exploiting the vertical shift invariance (\S\ref{sec:dips_branch})
and the horizontal translation invariance (\S\ref{sec:orig.shift}) of
the energy $\E$, we will {\em construct} a weakly compact minimising
sequence. Having made this special choice of minimising sequence, we
use a {\em profile decomposition} in \S\ref{sec:existence}. We show
that each profile obtained in this way has net Burgers vector zero, leading
to the conclusion that the net Burgers vector of the limit remains
$1$, and proving existence of a minimiser with the properties
required.

\subsection{An elementary lower bound}
\label{sec:lower_bound}
Our eventual goal is to establish a coercivity result for $\E$. We
begin with an elementary lower bound that will motivate subsequent
constructions.

Let $y = \yh + u, u \in \Hsi$, be a trial displacement, $\alpha \in
[Dy]$, and recall that $\alh = [D\yh]$ is unique. Since $u\in\Hsi$,
and hence $Du_b \to 0$ as $\d{b} \to \infty$, it follows that $y$ has
a well-defined net Burgers vector in the sense of Definition
\ref{def:netB}, and $B[y]=B[\yh]=1$.

Let
\begin{equation}
  \label{eq:defn_beta}
  \beta := \alpha - \alh;
\end{equation}
this 1-form satisfies the property that
\begin{equation*}
  \int_{\partial C} \beta \neq 0\qquad\text{if and only if}\qquad
    \int_{\partial C}\alpha \neq \int_{\partial C} \alh,
\end{equation*}
that is, dislocation cores present in $\beta$ are those that are
introduced by the addition of $u$ to $\yh$.

\begin{remark}
  We note that $\beta_b$ does not necessarily belong to $[-1/2, 1/2]$,
  and hence is not a bond length 1-form, so the definitions of
  \S\ref{sec:burgers_vec} do not strictly apply; however, it remains
  a 1-form in the sense of \cite[\S3.1]{ArizaOrtiz05}. As
  $\int_{\partial C} \beta \in \{0, 1, -1\}$ for all $C \in \Cells$,
  we shall therefore slightly abuse our notation and refer to
  dislocation cores in $\beta$ as the cells $C\in\Cells$ for which
  \begin{equation*}
    \int_{\partial C}\beta = \pm 1.
  \end{equation*}
  We also define $\Coresp[\beta]$, $\Coresm[\beta]$ and
  $\Cores[\beta]$ in the obvious way.
\end{remark}

\medskip

Next, we define $z : \Bonds \to \Z$ via
\begin{equation}
  \label{eq:defn_Du_beta_z}
  Du=\beta+z,
\end{equation}
which is compactly supported since $\beta, Du\in\ell^2(\Bonds)$.  We
shall see in \S\ref{sec:dips_branch} that the support of $z$ can be
thought of as a union of branch cuts connecting dislocation dipoles.

With this notation, we obtain the following result.

\begin{lemma}
  For any $u\in\Hsi$ with $Du=\beta+z$ as in \eqref{eq:defn_Du_beta_z} and for
  any $\eps>0$, we have 
  \begin{equation}
    \label{eq:lb:0}
    \E(u) \geq \b(\smfrac12\psi^{\prime\prime}(0)-\eps\b) \|\beta\|_{\ell^2}^2
      - \sum_{b \in \Bonds} \psi'(\alh_b) z_b + \< \del\E(0), u \> -C_\eps,
  \end{equation}
  where $C_\eps>0$ is a constant that is independent of $u$.
\end{lemma}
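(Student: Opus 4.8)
We start from the explicit extended energy
\begin{displaymath}
  \E(u) = \sum_{b \in \Bonds} \b[ \psi(\alh_b + Du_b) - \psi(\alh_b)
  - \psi'(\alh_b) Du_b \b] + \< \del\E(0), u \>,
\end{displaymath}
and substitute $Du_b = \beta_b + z_b$. Using $1$-periodicity of $\psi$ (assumption $(\psi1)$) and the fact that $z_b \in \Z$, the first term simplifies: $\psi(\alh_b + \beta_b + z_b) = \psi(\alh_b + \beta_b)$, so the bracket becomes $\psi(\alh_b + \beta_b) - \psi(\alh_b) - \psi'(\alh_b)\beta_b - \psi'(\alh_b) z_b$. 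This isolates the term $-\sum_b \psi'(\alh_b) z_b$, which appears verbatim in the claimed bound, and the term $\<\del\E(0),u\>$ is already present. So the whole task reduces to bounding below
\begin{displaymath}
  S := \sum_{b \in \Bonds} \b[ \psi(\alh_b + \beta_b) - \psi(\alh_b) - \psi'(\alh_b)\beta_b \b]
\end{displaymath}
by $\b(\tfrac12\psi''(0) - \eps\b)\|\beta\|_{\ell^2}^2 - C_\eps$.

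\textbf{Bounding $S$.} Split the bond set into the far-field $\Bonds_{\rm far} := \{ b : \d{b} \geq R_\eps \}$ and the finite core region $\Bonds_{\rm core}$, where $R_\eps$ is chosen large enough that $|\alh_b| \leq \frac{1}{2\pi \d{b}}$ (from \eqref{eq:alpha.hat.bnd}) is as small as we like on $\Bonds_{\rm far}$. On $\Bonds_{\rm core}$ there are only finitely many bonds; each summand $\psi(\alh_b + \beta_b) - \psi(\alh_b) - \psi'(\alh_b)\beta_b$ is bounded below by a constant depending only on $R_\eps$ (using continuity of $\psi$ and boundedness of $\psi, \psi'$ on compacts — note $\psi \in C(\R)$), so their sum contributes only to $-C_\eps$; we may safely discard the nonnegative part and absorb the rest into the constant. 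On $\Bonds_{\rm far}$ the main estimate is needed. Here I would use $(\psi5)$: $\psi(x) \geq \tfrac12\psi''(0)x^2$ for $x \in [-\tfrac12,\tfrac12]$. Writing $\alh_b + \beta_b = \alh_b + \beta_b$ and recalling $\beta_b = \alpha_b - \alh_b$ with $\alpha_b \in [-1/2,1/2]$, we have $\alh_b + \beta_b = \alpha_b \in [-1/2,1/2]$, so $\psi(\alh_b + \beta_b) = \psi(\alpha_b) \geq \tfrac12\psi''(0)\alpha_b^2 = \tfrac12\psi''(0)(\alh_b+\beta_b)^2$. Expanding, $\tfrac12\psi''(0)(\alh_b+\beta_b)^2 = \tfrac12\psi''(0)\beta_b^2 + \psi''(0)\alh_b\beta_b + \tfrac12\psi''(0)\alh_b^2$. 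The remaining terms $-\psi(\alh_b) - \psi'(\alh_b)\beta_b$ are, by Taylor expansion of $\psi$ about $0$ with $\psi(0)=\psi'(0)=0$, of size $O(\alh_b^2) + O(\alh_b)\beta_b$ since $\psi'(\alh_b) = \psi''(0)\alh_b + O(\alh_b^3)$. Thus on each far-field bond
\begin{displaymath}
  \psi(\alh_b+\beta_b) - \psi(\alh_b) - \psi'(\alh_b)\beta_b \geq \tfrac12\psi''(0)\beta_b^2 - C|\alh_b|\,|\beta_b| - C\alh_b^2.
\end{displaymath}
Summing over $\Bonds_{\rm far}$, the cross term is handled by Young's inequality, $C|\alh_b||\beta_b| \leq \eps\beta_b^2 + C_\eps\alh_b^2$, and $\sum_{b} \alh_b^2 \leq \sum_b \frac{1}{4\pi^2\d{b}^2} < \infty$ by \eqref{eq:alpha.hat.bnd} (the lattice sum of $\d{b}^{-2}$ converges in 2D — wait, it does not; $\sum \d{b}^{-2}$ diverges logarithmically). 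This is the subtlety to watch: one must instead keep the $\sum \tfrac12\psi''(0)\alh_b^2$ contribution from the lower bound on $\psi(\alpha_b)$ and combine it with $-C\alh_b^2$ from the Taylor remainder so that the $\alh_b^2$ terms cancel to leading order, leaving only genuinely summable remainders of size $O(\alh_b^4)$ or $O(|\alh_b|^3|\beta_b|)$; since $\sum \d{b}^{-4} < \infty$ and $\sum \d{b}^{-3} < \infty$, and the $|\alh_b|^3|\beta_b|$ term is controlled by Young against $\eps\beta_b^2 + C_\eps\d{b}^{-6}$, everything assembles.

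\textbf{Main obstacle.} The delicate point is precisely the bookkeeping of the $\alh_b^2$-order terms: a naive split loses a divergent lattice sum $\sum_b \d{b}^{-2}$, so one must be careful that the quadratic-in-$\alh$ contributions coming from the lower bound $\psi(\alpha_b) \geq \tfrac12\psi''(0)\alpha_b^2$ exactly match (to leading order) those coming from expanding $-\psi(\alh_b) - \psi'(\alh_b)\beta_b$, leaving only $O(\d{b}^{-3})$ or $O(\d{b}^{-4})$ remainders which are absolutely summable. This is exactly the role assumption $(\psi5)$ plays, as flagged in the remark following the list of assumptions: without the sharp quadratic lower bound one cannot close the estimate. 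Once the cancellation is arranged, choosing $R_\eps$ large enough that all the $\eps$-absorptions and the finite-core constant combine into a single $C_\eps$ independent of $u$ yields \eqref{eq:lb:0}.
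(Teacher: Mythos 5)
Your proof is correct and follows the same overall strategy as the paper: use periodicity of $\psi$ and $z_b\in\Z$ to pull out $-\sum_b\psi'(\alh_b)z_b$ and $\<\del\E(0),u\>$ from the explicit extension \eqref{eq:explicit_E_extension}, bound the finitely many ($\lesssim R_\eps^2$) near-field bonds by a constant using the uniform bounds on $\psi$, $\psi'$ and $\beta$, and use $(\psi5)$ on the far-field bonds where $|\alh_b|\leq\smfrac{1}{2\pi\d{b}}$ is small. The only place you genuinely diverge from the paper is the far-field per-bond estimate: the paper introduces the divided difference $g(s,t)=\b(\psi(t+s)-\psi(t)-\psi'(t)s\b)/s^2$ and uses its uniform continuity near $t=0$ (together with $g(s,0)\geq\smfrac12\psi''(0)$ from $(\psi5)$) to conclude directly that the summand is at least $\b(\smfrac12\psi''(0)-\eps\b)\beta_b^2$ for $\d{b}$ large, whereas you apply $(\psi5)$ at $\alpha_b=\alh_b+\beta_b\in[-\smfrac12,\smfrac12]$ and then Taylor-expand $\psi(\alh_b)$ and $\psi'(\alh_b)$ about $0$ so that the $\smfrac12\psi''(0)\alh_b^2$ and $\psi''(0)\alh_b\beta_b$ terms cancel exactly, leaving remainders of order $\alh_b^3|\beta_b|$ and $\alh_b^4$ which are summable after Young's inequality. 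You correctly identified and repaired the one pitfall of this route (a naive split leaves the divergent sum $\sum_b\d{b}^{-2}$); the cost relative to the paper's argument is that you need the smoothness of $\psi$ near $0$ and $\psi'(0)=0$ (both available from $(\psi2)$--$(\psi4)$), while the paper's continuity argument is slightly softer; a side benefit of your version is that applying $(\psi5)$ at $\alpha_b$ itself avoids the paper's need to extend the quadratic bound to $|s|\leq\smfrac12+\delta(\eps)$ to accommodate $|\beta_b|>\smfrac12$.
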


\begin{proof}
This estimate arises from the expression \eqref{eq:explicit_E_extension}; using
the periodicity of the potential $\psi$, we can write
\begin{displaymath}
  \E(u) = \sum_{b\in\Bonds} \b(\psi(\alh_b+\beta_b)-\psi(\alh_b)
    -\psi^\prime(\alh_b)\beta_b\b) -\sum_{b\in\Bonds}\psi'(\alh_b )z_b +\< \del\E(0), u \>.
\end{displaymath}
Define the function
 \begin{align*}
    g(s,t):= \cases{\displaystyle
      \frac{\psi(t+s)-\psi(t)-\psi^\prime(t)s}{s^2} &s\neq0,\\
      \smfrac12 \psi''(t) &s=0.
    }
  \end{align*}
  By assumption $(\psi5)$ in \S\ref{sec:Ediff_defn},
  $g(s,0)\geq\smfrac12\psi^{\prime\prime}(0)$ for any $|s|\leq1/2$.

  Since $g$ is uniformly continuous on $[-1/2, 1/2] \times [-\tau, \tau]$
  for some $\tau > 0$, it follows that for each $\eps > 0$ there exists
  $\delta(\eps) > 0$ such that
  \begin{align*}
    g(s,t) \geq \smfrac12\psi^{\prime\prime}(0)-\eps\quad\text{for} \quad|s|
    \leq\smfrac12+\del(\eps) \text{ and } |t|\leq \delta(\eps).
  \end{align*}

  Next, we note that \eqref{eq:alpha.hat.bnd} implies
\begin{equation*}
  |\alh_b| \leq \frac{1}{2 \pi \d{b}} \quad \text{and} \quad
  |\beta_b| = |\alpha_b-\alh_b| \leq \frac12+\frac{1}{2\pi\d{b}}.
\end{equation*}
Hence there exists $R_0 > 0$ such that, for $\d{b} \geq R_0$, 
\begin{displaymath}
  g(\beta_b, \alh_b) \geq \smfrac12\psi^{\prime\prime}(0)-\eps,
\end{displaymath}
which can equivalently be stated as
\begin{equation}
  \psi(\alh_b+\beta_b)-\psi(\alh_b)-\psi^\prime(\alh_b)\beta_b\geq
  \b(\smfrac12 \psi^{\prime\prime}(0)-\eps\b)|\beta_b|^2 \qquad
  \text{for } \d{b} \geq R_0.
  \label{eq:psi_lb}
\end{equation}

It may be checked that
\begin{displaymath}
  \#\b\{b\bsep \d{b} < R_0\b\}\lesssim R_0^2,
\end{displaymath}
and since $\psi$, $\psi'$ and $\beta$ are uniformly bounded, it
therefore follows that
\begin{displaymath}
  \sum_{b\in\Bonds} \B(\psi(\alh_b+\beta_b)-\psi(\alh_b)
    -\psi^\prime(\alh_b)\beta_b\B) \geq \b(\smfrac12\psi^{\prime\prime}(0)-\eps\b)
    \sum_{b\in\Bonds}|\beta_b|^2- C R_0^2. \qedhere
\end{displaymath}
\end{proof}
\medskip

We can think of $\|\beta\|_{2}^2$ as estimating elastic stored
energy. In the following sections we will establish several results on
$z = Du - \beta$, which will eventually allow us to bound the
remaining terms $\<\del\E(0),u\>$ and $\sum_b\psi'(\alh_b)z_b$ in
\eqref{eq:lb:0}.

\subsection{Dipoles \& Branchcuts}
\label{sec:dips_branch}
Let $y = \yh + u$, $u \in \Hsi$, be a trial displacement, $\alpha \in
[Dy]$, and let $\beta, z$ be defined by
\eqref{eq:defn_Du_beta_z}. While $\alpha$ and hence $\beta$ are
uniquely defined (except in borderline cases when $\alpha_b \in \{\pm
1/2\}$), one can exploit the vertical shift invariance of the lattice
(encoded in assumption ($\psi$1), periodicity of $\psi$) to construct
equivalent displacements $\tilde{u}\in\Hsi$,
\begin{equation}
  \tilde{u}:=u+U \label{eq:vert.shifts}
\end{equation}
where $U : \L \to \Z$ and $U\in\Usz$, and hence modify the $z$
component.

If we let $\tilde{y} := \yh + \tilde{u}$, then clearly, $\alpha \in
[D\tilde{y}]$ and this leads to the same definition of
$\beta$. Crucially, though, $D\tilde{u} - \beta \neq D u - \beta$. We
can therefore ask how to choose $U$ in an ``optimal'' way. It turns
out that minimizing the total length of the branch cuts is a useful
choice, which amounts to minimizing
$\|Du+DU-\beta\|_1=\|z+DU\|_1$. Since $z$ has compact support, a
minimizer clearly exists, but it need not be unique; see Figure
\ref{fig:typ_conf}. 
We may therefore assume, without loss of generality, that $u$
satisfies the {\em discrete minimal connection property} ({\DMCP})
\begin{equation}
  \label{eq:DMCP}
  \|Du-\beta\|_1=\|z\|_1=\min_{Z : \L \to \Z}\,\|Du+DZ-\beta\|_1.
\end{equation}
This minimality condition is similar to the idea of {\em minimal
  connections}, introduced in \cite{BCL86}.

\medskip

We will now establish various properties of the structure of $z$
defined in~\eqref{eq:defn_Du_beta_z}. In particular, we will show that
$z$ can be decomposed into a sum $\sum z^m$ and that the support of
each $z^m$ is analogous to a branch cut for a dipole. 



\begin{lemma}
  \label{lem:z_decomp}
  Let $u \in \Hsi$ satisfy the \DMCP~\eqref{eq:DMCP}
  and suppose $Du=\beta+z$ as in \eqref{eq:defn_Du_beta_z}. Then we can write
  \begin{equation}
    z=\sum_{m=1}^{M}z^m,\label{eq:z_decomp}
  \end{equation}
  where $M = \#\Coresp[\beta]$ is the number of dipoles contained in
  $\beta$ and $z^m : \Bonds \to \{-1,0,+1\}$, $m = 1, \dots, M$,
  satisfy the following properties:
  \begin{enumerate}
    \item $z_{b_i}^m=1$ on a sequence of bonds $(b_i)_{i=0}^n$ such that
      \begin{enumerate}
        \item $\partial b_i$ and $\partial b_{i+1}$ share a common 0-cell for 
          each $i=0,\ldots,n-1$,
        \item $b_0\in\partial C_m^-$ and $-b_n\in\partial C_m^+$ where
          $C_m^+\in\Coresp[\beta]$ and $C_m^-\in\Coresm[\beta]$.
      \end{enumerate}
    \item $z_b^m=0$ for bonds outside the set $\{\pm b_i\sep i=0,\ldots n\}$.
  \end{enumerate}
\end{lemma}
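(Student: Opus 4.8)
The plan is to interpret $z$ as an integer-valued $1$-form (a $1$-chain, once we orient bonds) and to argue by induction on the number $M = \#\Coresp[\beta]$ of positive cores in $\beta$. The starting observation is a discrete Stokes/divergence identity: because $Du = \beta + z$ and $Du$ is a gradient, we have $\partial_1(Du) = 0$ at every lattice point, hence $\partial_1 z = -\partial_1 \beta$ at every $\xi \in \L$. Now $\partial_1 \beta$ is supported precisely on the cores of $\beta$: more carefully, writing $\beta = \sum_{b}\beta_b\, b$ as a chain, its boundary $\partial_1\beta$ is a $0$-chain whose only nonzero coefficients sit at vertices adjacent to cores, and summing over $\partial C$ for each $C \in \Cores[\beta]$ shows that, as a $0$-chain, $\partial_1\beta$ (equivalently $\partial_1 z$) behaves like $-\sum_{C\in\Coresp[\beta]}\partial_1\partial_2 C + \sum_{C\in\Coresm[\beta]}\partial_1\partial_2 C = 0$ — i.e. $z$ is a \emph{cycle}. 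The first key step is therefore: \emph{$z$, viewed as an integer $1$-chain, has zero boundary}, so by the perfectness of the lattice complex (\cite[\S2.2.1, Axiom (A4)]{ArizaOrtiz05}) $z = \partial_2 S$ for some integer $2$-chain $S$; and $\partial_2 S$ has the ``defect'' of $\beta$ built in, meaning the $2$-cells on which $S$ jumps by $\pm 1$ are exactly the cores $\Cores[\beta]$.

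Granting this, the second step is to extract one path at a time. Pick a core $C_1^+ \in \Coresp[\beta]$. Since $\int_{\partial C_1^+}\beta = +1$ and $Du$ is a gradient with $\int_{\partial C_1^+} Du = 0$, we get $\int_{\partial C_1^+} z = -1$, so $z$ has a bond $b_0$ with $-b_0 \in \partial C_1^+$ and $z_{b_0} = 1$ (after reducing borderline $\alpha_b=\pm1/2$ cases as in Remark \ref{rem:alpha_ambig} so that adjacent cores don't cancel). I would then ``trace the streamline'' of $z$: at the far endpoint of $b_0$ the cycle condition $\partial_1 z = -\partial_1\beta$ forces, unless we are at a $0$-cell adjacent to another core, the existence of a continuing bond $b_1$ with $z_{b_1}=1$ sharing a $0$-cell with $b_0$. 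Following this greedily and invoking the {\DMCP} to rule out the path revisiting a bond or forming a closed loop (a closed sub-loop of $z$ could be cancelled by subtracting $D(\mathbbm{1}_A)$ for the enclosed region $A$, strictly decreasing $\|z\|_1$ — contradiction), the path must terminate, and it can only terminate at a $0$-cell on the boundary of a \emph{negative} core $C_1^- \in \Coresm[\beta]$, since those are the only other sources/sinks of $\partial_1 z$. This produces $z^1$ supported on $\{\pm b_i : i = 0,\dots,n\}$ with $z^1_{b_i}=1$, satisfying (1a), (1b) and (2).

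The third step is the induction: set $\tilde z := z - z^1$. One checks $\tilde z$ is still integer-valued, still a cycle, and its associated ``core set'' is $\Cores[\beta]\setminus\{C_1^+,C_1^-\}$ (subtracting the path $z^1$ removes exactly the boundary contributions at $C_1^\pm$ and leaves the others untouched). Crucially $\tilde z$ still satisfies a minimality property compatible with {\DMCP}: since $z^1$ and the remaining branch cuts have disjoint bond supports in the generic case — and in any overlap case one can re-route using {\DMCP} — we have $\|\tilde z\|_1 = \|z\|_1 - \|z^1\|_1$, so $\tilde z$ is still ``minimal'' and the greedy tracing argument applies again. After $M$ steps all positive cores are exhausted; since $\sum_{C}\int_{\partial C}\beta = B[y]-B[\yh] = 0$, the number of negative cores also equals $M$ and is exhausted simultaneously, so the remaining chain has empty core set, is a cycle, and by {\DMCP}-minimality must be $0$. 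This yields the decomposition \eqref{eq:z_decomp} with $M = \#\Coresp[\beta]$.

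The main obstacle is the second step — proving that the greedily traced path is \emph{simple} (does not repeat bonds, in particular contains no closed sub-loop) and \emph{terminates at a negative core rather than wandering off to infinity or stopping prematurely}. Finiteness is immediate from compact support of $z$, but excluding sub-loops and premature termination is where the {\DMCP} is genuinely used: any closed component of $z$, or more subtly any configuration where $z$ could be partially cancelled by the gradient of an indicator function of a bounded region, contradicts the $\ell^1$-minimality in \eqref{eq:DMCP}. Making this airtight requires a careful bookkeeping of the $0$-chain $\partial_1 z$ at each vertex — showing it takes values in $\{-1,0,+1\}$ with the nonzero values localized at cores — and a clean statement that $\partial_1 z$ vanishing at an interior vertex of the path forces exactly one incoming and one outgoing unit-weight bond. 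I would isolate these combinatorial facts as sub-claims and prove the loop-exclusion by the explicit ``subtract $D\mathbbm{1}_A$'' energy-of-connection argument.
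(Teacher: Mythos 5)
Your overall plan --- trace branch cuts one at a time from a positive to a negative core, exclude repeated bonds and closed sub-loops by subtracting $D\mathbbm{1}_\Omega$ for the enclosed region and contradicting the \DMCP, then iterate over the positive cores --- is essentially the paper's argument. However, the ``first key step'' on which your tracing relies is wrong, and it is precisely the point you flag as needing careful bookkeeping. For a $0$-form $u$, the vertex quantity $\sum_{b\in\Rg_\xi}Du_b$ is the graph Laplacian of $u$ and does not vanish, so $\partial_1(Du)=0$ at lattice points is false; what does hold is $\int_{\partial C}Du=0$ for every $2$-cell $C$, whence $\int_{\partial C}z=-\int_{\partial C}\beta\in\{0,\pm1\}$ with the nonzero values exactly at $\Cores[\beta]$. (You in fact use this correct cell-level identity to find the starting bond at $C_1^+$, but then switch to vertices for continuation and termination, where no conservation law is available.) Consequently $z$ is not a cycle and not of the form $\partial_2 S$, and your proposed sub-claim that $\partial_1 z$ takes values in $\{-1,0,+1\}$ localized at cores cannot be proved: for a single dipole joined by one straight cut, the two cut bonds meeting at a lattice point adjacent to the cut cross it in the same sense, so the vertex ``divergence'' of $z$ there is $\pm2$ although that vertex is nowhere near a core. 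A vertex-based greedy tracing therefore has no reason to admit a continuation bond, nor to stop only at negative cores. The repair is the paper's formulation: trace through $2$-cells (dual lattice vertices) --- from $C^+_m$ cross a bond with positive residual $z$ into the neighbouring cell; at a non-core cell $\int_{\partial C}z=0$ guarantees an exit bond; your $D\mathbbm{1}_\Omega$ loop-exclusion argument, applied to the polygon through the barycentres of the visited cells, shows no bond repeats, so compact support of $z$ forces termination, which can only occur at a cell of $\Coresm[\beta]$; the shared-$0$-cell property of consecutive bonds then follows because they are two distinct edges of a common triangle.

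A secondary gap is your induction step: after subtracting $z^1$ you assert that the remainder ``still satisfies a minimality property compatible with \DMCP,'' handling overlaps by unexplained re-routing. The \DMCP\ is a property of the displacement $u$, not of an arbitrary integer $1$-form, and $z-z^1$ need not arise from any admissible competitor, so this minimality is not established. The paper avoids re-minimising altogether: it repeats the cell-tracing with the residual criterion $z_b-\sum_{m<i}z^m_b>0$, while invoking the original \DMCP\ of $u$ (together with the compact support of $z$) both for the loop exclusion and to show that the final residual $z-\sum_m z^m$ vanishes identically.
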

\begin{proof}
  The result is geometrically intuitive; see Figure
  \ref{fig:typ_conf}. We therefore postpone a complete proof to
  Appendix \ref{app:z_decomp}.
\end{proof}

\begin{figure}
  \includegraphics[height=8cm]{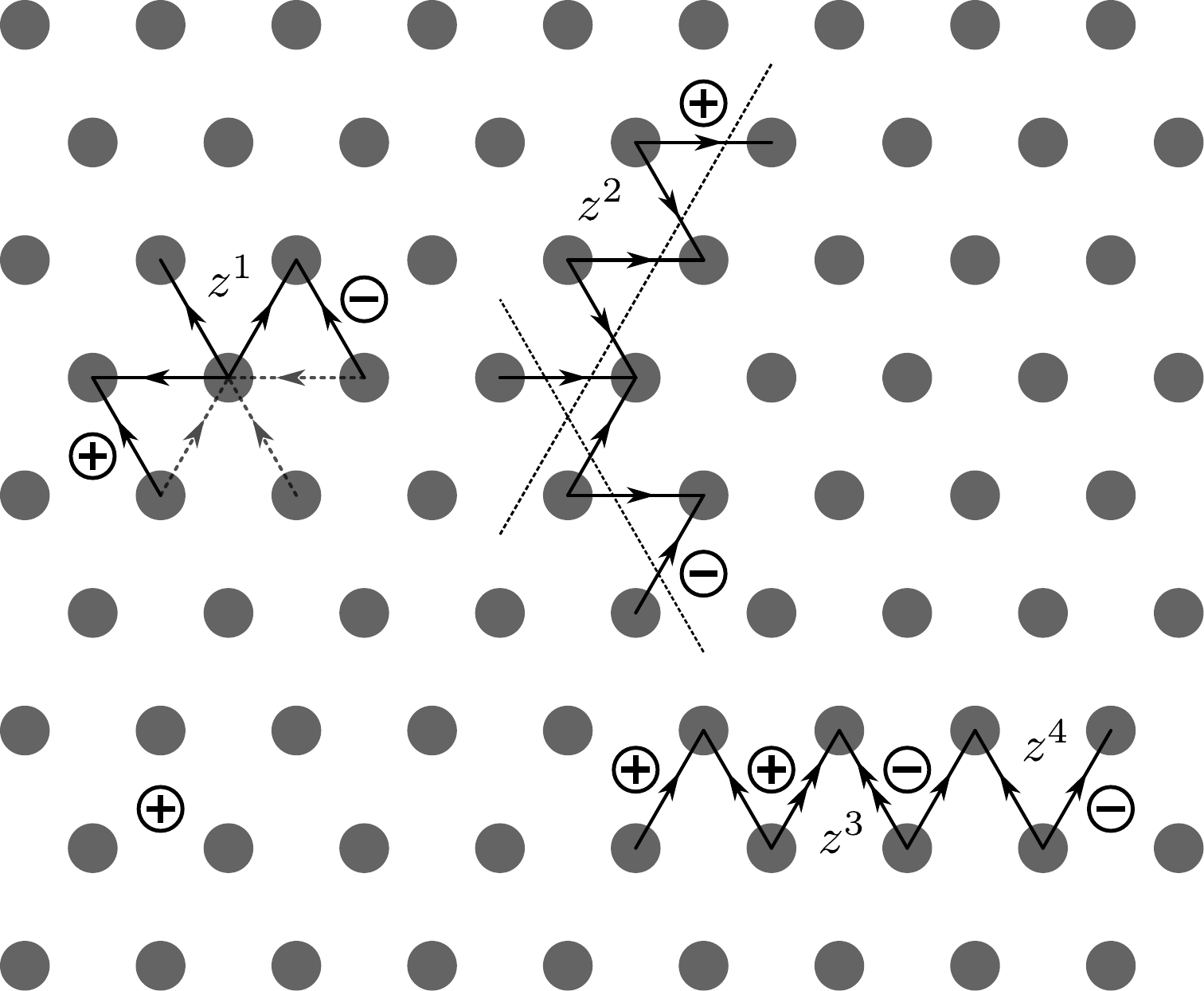}
  \caption{A typical example of the support of a minimal $z$ and its
    decomposition into $z^m$ for a given distribution of dipoles. An
    arrow means that $z_b=1$ on the bond pointing in the direction of
    the arrow, and a double arrow means $z_b=2$. The grey dashed bonds
    on the left of the diagram show an alternative definition of $z$
    with the same minimal norm, and the black dashed lines show two
    lines which show that $z^2$ can be decomposed into 2 straight
    cuts.}
  \label{fig:typ_conf}
\end{figure}


We will say that $z^m$ {\em connects} the dislocation cores $C^+_m$
and $C^-_m$.  Moreover, we obtain the following corollary.

\begin{corollary}
  \label{th:short_dual_latt}
  Each 1-form $z^m$ in the decomposition \eqref{eq:z_decomp} can be
  identified with a shortest path in the dual lattice between
  $C^+_m\in\Coresp[\alpha]$ and $C^-_m\in\Coresm[\alpha]$, and further
  \begin{equation*}
    \|z^m\|_1 = \hop_2(C^+_m,C^-_m).
  \end{equation*}
\end{corollary}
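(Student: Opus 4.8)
The plan is to read the dual‑lattice path structure straight out of Lemma~\ref{lem:z_decomp} to get one inequality, and then to use the \DMCP~\eqref{eq:DMCP} together with an elementary exactness argument on the lattice complex to exclude any shorter connection, giving the other. Throughout I will use that $Du$ is an exact $1$‑form, so $\int_{\partial C} Du = 0$ for every $C\in\Cells$ (it telescopes around the triangle), whence $\int_{\partial C} z = -\int_{\partial C}\beta$ carries exactly the cores of $\beta$.

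\textbf{Step 1 (lower bound).} First I would unpack Lemma~\ref{lem:z_decomp}: $z^m$ is supported on a bond sequence $(b_i)_{i=0}^n$ and its reverses, successive bonds being edges of a common cell, with $b_0\in\partial C_m^-$ and $-b_n\in\partial C_m^+$. Reading $\Cells$ as the nodes of the dual graph and each bond as the dual edge joining the two cells it bounds, this exhibits $z^m$ as (the indicator $1$‑form of) a walk in the dual lattice from $C_m^-$ to $C_m^+$ crossing the bonds $b_0,\dots,b_n$ in turn. Since $z^m\in\{-1,0,1\}$, its support has exactly $\|z^m\|_1$ bonds, so this walk has length $\|z^m\|_1$; in particular $\|z^m\|_1\ge\hop_2(C^+_m,C^-_m)$.

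\textbf{Step 2 (matching upper bound).} Suppose, towards a contradiction, that $\|z^k\|_1>\hop_2(C^+_k,C^-_k)$ for some $k$. For every $k$ let $\zeta^k$ be the $\{-1,0,1\}$‑valued $1$‑form carried by a geodesic dual path from $C^-_k$ to $C^+_k$ (oriented so that its coboundary agrees with that of $z^k$), and set $\zeta:=\sum_k\zeta^k$. Since $\zeta^k$ and $z^k$ are dual connections between the same ordered pair of cells, $\int_{\partial C}\zeta = -\int_{\partial C}\beta = \int_{\partial C}z$ for all $C$, so $\zeta-z$ is a closed integer $1$‑form; as the lattice complex is perfect (simply connected), it is exact, $\zeta-z=DU$ for some $U:\L\to\Z$ with $U(\xi_0)=0$. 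Hence $\zeta=z+DU$ is an admissible competitor in~\eqref{eq:DMCP}, and
\[
  \sum_k\hop_2(C^+_k,C^-_k)\;\le\;\sum_k\|z^k\|_1\;=\;\|z\|_1\;\le\;\|\zeta\|_1\;\le\;\sum_k\|\zeta^k\|_1\;=\;\sum_k\hop_2(C^+_k,C^-_k),
\]
where the first inequality is Step~1, the equality $\|z\|_1=\sum_k\|z^k\|_1$ uses that the decomposition in Lemma~\ref{lem:z_decomp} is sign‑coherent on each bond (cf.\ the ``double arrow'' in Figure~\ref{fig:typ_conf}, so no cancellation occurs), and the third inequality is the \DMCP. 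All inequalities are therefore equalities, forcing $\|z^k\|_1=\hop_2(C^+_k,C^-_k)$ for every $k$ — a contradiction. Finally, a dual walk of length $\hop_2(C^+_m,C^-_m)$ cannot revisit a node, so $z^m$ is identified with an honest shortest dual path, as claimed.

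\textbf{Main obstacle.} The delicate points are the two ``soft'' ingredients of Step~2: (i) the exactness step, i.e.\ that two dual paths with the same endpoints differ by $DU$ for an \emph{integer}-valued $U$ — this rests on vanishing first cohomology of the lattice complex and on the $1$‑forms in question being closed, which must be extracted from the framework of \cite{ArizaOrtiz05}; and (ii) the bookkeeping identity $\|z\|_1=\sum_m\|z^m\|_1$, i.e.\ the absence of cancellation among the $z^m$. Both should come out of the construction carried out in Appendix~\ref{app:z_decomp}, but need to be recorded there in a form strong enough to support the present argument; if the decomposition of Lemma~\ref{lem:z_decomp} were not sign‑coherent one would instead have to argue core‑pair by core‑pair, which is more awkward because different $z^m$ may overlap.
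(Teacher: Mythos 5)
Your proof is correct, and its second step takes a genuinely different route from the paper's. The lower bound (Step 1) is the same in both: the bond sequence from Lemma \ref{lem:z_decomp} is read as a dual-lattice walk from $C^-_m$ to $C^+_m$ of length $\|z^m\|_1$. For the upper bound, the paper argues cut by cut and entirely constructively: it closes the dual path of a single $z^m$ with a geodesic between $C_m^+$ and $C_m^-$, takes $U$ to be the characteristic function of the bounded interior of the resulting polygonal loop, and lets the \DMCP{} force $\|z^m\|_1 \leq \|z^m + DU\|_1 = \hop_2(C^+_m,C^-_m)$. You instead replace \emph{all} cuts simultaneously by geodesics $\zeta^k$, invoke exactness of the closed integer $1$-form $\zeta - z$ to get an integer corrector $U$, and close with a squeeze of the chain $\sum_k\hop_2 \leq \sum_k\|z^k\|_1 = \|z\|_1 \leq \|\zeta\|_1 \leq \sum_k\|\zeta^k\|_1 = \sum_k\hop_2$; the contradiction framing is unnecessary, as the chain already forces termwise equality. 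The two "soft" ingredients you flag are indeed the load-bearing ones, but both are available in the paper's framework: exactness follows from the perfectness axiom already quoted in \S\ref{sec:burgers_vec} (every loop bounds, so $U(\xi):=\int_{\Gamma_\xi}(\zeta-z)$ is well defined and integer-valued), and the no-cancellation identity $\|z\|_1=\sum_m\|z^m\|_1$ is exactly \eqref{eq:length_decomp}, which the paper also asserts and which follows from the greedy construction in Appendix \ref{app:z_decomp} (each $z^m$ equals $+1$ only on bonds with $z_b>0$ and $\sum_m z^m=z$). What your version buys is a cleaner, more algebraic argument that yields $\|z^m\|_1=\hop_2(C^+_m,C^-_m)$ for all $m$ at once, at the cost of leaning on the abstract exactness statement; the paper's version buys an explicit $U$ (an indicator of a bounded region), which is the same device it reuses later (e.g., in Lemma \ref{th:wk_hop_min}), so the constructive form has additional mileage in the paper.
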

\begin{proof}
  First, we observe that, due to the \DMCP~\eqref{eq:DMCP}
  and the decomposition proven in Lemma~\ref{lem:z_decomp},
  \begin{equation}
    \| z \|_1 = \sum_{m = 1}^M \| z^m \|_1. \label{eq:length_decomp}
  \end{equation}
  That is, if $z^m_b, z^{m'}_b \neq 0$, then $z^m_b, z^{m'}_b$ have
  the same sign.

  The construction employed in the proof of Lemma \ref{lem:z_decomp}
  identifies a sequence of bonds $b_i$ and cells $C_{m,i}$ such that
  $b_i,-b_{i+1}\in\partial C_{m,i}$, $z^m_{b_i}=1$. Using the natural
  identification of cells with points in the dual lattice, this
  implies that $C_{m,i}$ are adjacent in the dual lattice, and
  furthermore that $b_i$ can be identified with edges connecting these
  cells; this leads to the fact that
  \begin{align*}
    \|z^m\|_1 &\geq \hop_2(C^+_m,C^-_m).
  \end{align*}

   To prove the converse, we take the path in the dual lattice
    corresponding to $z^m$, adjoin a shortest path between $C_m^+$ and
    $C_m^-$ in the dual lattice, and thus obtain a closed dual
  lattice path. We construct a polygonal closed path in $\R^2$ by
  connecting the barycentres of the cells along the path and define
  $U$ to be the characteristic function of the bounded interior of
  this loop in $\R^2$.
  
  It is now straightforward to check that by defining $\tilde{z}^m:=z^m+DU$
  and $\tilde{z}:=z-z^m+\tilde{z}^m$
  \begin{displaymath}
    \| \tilde{z} \|_1 \leq \sum_{m' = 1}^M \| z^{m'} \|_1 - \| z^m\|_1
    + \| \tilde{z}^m \|_1.
  \end{displaymath}
  Since $U$ is a compactly supported integer shift as in \eqref{eq:vert.shifts},
  the discrete minimal connection property \eqref{eq:DMCP} implies that
  $\| z^m \|_1 \leq \|\tilde{z}^m \|_1 = \hop_2(C^+_m,C^-_m)$, completing the proof.
\end{proof}

Later on it will be convenient to assume that each cut $z^m$ is made
up of at most two {\em straight cuts}: straight cuts are defined to be
1-forms $z:\Bonds\to\{-1,0,1\}$ for which there exists a line
$L:=\{x^C+ta_i\sep t\in\R\}$, where $x^C$ is the barycentre of some
$C\in\Cells$ and $a_i$ a nearest neighbour direction, such that
$z_b\neq0$ if and only if the bond satisfies $b\cap L\neq\emptyset$,
\begin{equation*}
  {\rm clos}\b\{x\in\R^2\bsep x\in b \in \Bonds, z_b\neq0\b\}
\end{equation*}
is a connected set, and $z_b>0$ either exclusively on bonds in
the directions $a_{i+1}$ and $a_{i+2}$ or in the directions $a_{i-1}$
and $a_{i-2}$. We will say that a straight cut `lies in the direction
$a_j$' whenever $a_i=\pm a_j$ in the definition of the corresponding $L$.
See the cut depicted in the centre of Figure~\ref{fig:typ_conf} for
a visualisation of the definition.

In the next lemma, we show that we can always choose the decomposition
\eqref{eq:z_decomp} such that each $z^m$ is composed of at most 2 straight
cuts.  We will refer to any $u$ as in the conclusion of Lemma
\ref{th:straight_cuts} as satisfying the {\em straight cuts property}.

\begin{lemma}
  \label{th:straight_cuts}
  Let $u\in\Hsi$, and $Du=\beta+z$ as in \eqref{eq:defn_Du_beta_z}.
  Then there exists $\tilde{u}\in\Hsi$ satisfying the
  \DMCP~\eqref{eq:DMCP} as well as
  $D\tilde{u}=\beta+\sum_{m=1}^{\#\Coresp[\beta]} z^m$ where each
  $z^m$ is the sum of at most 2 straight cuts.
\end{lemma}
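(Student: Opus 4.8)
The plan is to take the decomposition $z = \sum_{m=1}^M z^m$ provided by Lemma~\ref{lem:z_decomp} and replace each individual $z^m$ by a homologous $1$-form supported on at most two straight cuts, without increasing $\|z\|_1$, so that the resulting $\tilde u$ still satisfies the \DMCP. Since each $z^m$ is (by Corollary~\ref{th:short_dual_latt}) a shortest dual-lattice path from $C_m^-$ to $C_m^+$, the task reduces to a purely geometric statement about the triangular lattice: \emph{any shortest path in the dual lattice between two cells can be re-routed, within its homology class relative to the endpoints, onto a union of at most two ``straight'' segments of the kind defined above, without increasing its length.} I would state and prove this as the core geometric sublemma, working one $m$ at a time and leaving all other $z^{m'}$ untouched; summing the results and invoking additivity \eqref{eq:length_decomp} will give the final claim.

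The key steps, in order, are as follows. First, fix $m$ and let $x^{C_m^\pm}$ be the barycentres of the two cores; by a rotation/reflection symmetry of the lattice I may assume the displacement vector $x^{C_m^+} - x^{C_m^-}$ lies in the closed $60^\circ$ sector spanned by two adjacent nearest-neighbour directions, say $a_i$ and $a_{i+1}$. Write this vector in the (non-orthogonal) basis $\{a_i, a_{i+1}\}$ as $p\,a_i + q\,a_{i+1}$ with $p,q \geq 0$; standard facts about the hopping metric on the triangular dual lattice give $\hop_2(C_m^-,C_m^+) = $ (an explicit expression in $p,q$, essentially $p+q$ up to the lattice geometry), and a path realising this is obtained by going ``straight'' a distance corresponding to $p$ in direction $a_i$ and then ``straight'' a distance corresponding to $q$ in direction $a_{i+1}$. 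Second, I check that each of these two legs is a straight cut in the precise sense defined just before the lemma: the bonds crossed by a dual-lattice segment in direction $a_i$ are exactly the bonds meeting the line $L = \{x^C + t a_i\}$, they form a connected closed set, and they point exclusively in the two directions $a_{i\pm1}$, $a_{i\pm2}$ on one side — matching the definition; the concatenation of the two legs is therefore a sum of at most two straight cuts. Third, I define $U_m : \L \to \Z$ to be the characteristic function of the bounded region enclosed by the old path $z^m$ together with the reversal of the new two-leg path, so that $\tilde z^m := z^m + DU_m$ is precisely the new path; the total integer shift is $U := \sum_m U_m \in \Usz$ with values in $\Z$, and $D\tilde u := Du + DU = \beta + \sum_m \tilde z^m$. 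Fourth, since each $\tilde z^m$ has the same length $\hop_2(C_m^-,C_m^+)$ as $z^m$, additivity \eqref{eq:length_decomp} gives $\|D\tilde u - \beta\|_1 = \sum_m \|\tilde z^m\|_1 = \sum_m \|z^m\|_1 = \|z\|_1$, and because $u$ satisfied the \DMCP this value is already minimal over all integer shifts, so $\tilde u$ satisfies the \DMCP as well; finally $\tilde u \in \Hsi$ because $\tilde u - u = U$ has compactly supported gradient, so $D\tilde u - Du \in \ell^2(\Bonds)$.

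I would expect the main obstacle to be the bookkeeping in the geometric sublemma — specifically, verifying that a minimal-length dual-lattice path can be realised with a \emph{monotone} structure (all moves in directions $a_i$ or $a_{i+1}$, never backtracking) so that it decomposes into exactly two straight cuts rather than a staircase of many short ones. The subtlety is that the ``obvious'' shortest path in the hexagonal dual lattice is a staircase, and one must argue that an equal-length path exists which first exhausts all its $a_i$-moves and then all its $a_{i+1}$-moves; this is where the specific geometry of the triangular lattice (the fact that $a_i + a_{i+1}$ and the intermediate directions relate in a controlled way) enters, and it is plausible that the cleanest route is to not insist on a single dual-path but rather to verify directly that the union of bonds crossed by the two chosen line segments is a valid sum of two straight cuts in the stated sense and is homologous (rel endpoints) to $z^m$. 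A secondary, minor point to handle carefully is the borderline case where $x^{C_m^+} - x^{C_m^-}$ is parallel to a single lattice direction, in which case a single straight cut suffices; and one should note, as in Remark~\ref{rem:alpha_ambig}, that the cores $C_m^\pm$ are assumed non-adjacent (or the construction is trivial), so the straight cuts are genuinely non-degenerate. The details of defining $U_m$ as the characteristic function of a bounded planar region, and checking $DU_m$ equals the difference of the two dual paths, are routine and mirror the argument already given in the proof of Corollary~\ref{th:short_dual_latt}.
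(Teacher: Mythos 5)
Your overall architecture coincides with the paper's: keep $\beta$ fixed, replace each $z^m$ by an equal-length cut homologous to it via a compactly supported integer-valued shift $U_m$ (exactly as in the proof of Corollary \ref{th:short_dual_latt}), and observe that since the total $\ell^1$-norm is unchanged the \DMCP~is preserved, while $\tilde u - u = U \in \Usz$ keeps $\tilde u \in \Hsi$. That bookkeeping is fine. The gap is that the step you delegate to ``standard facts about the hopping metric'' is not standard and is in fact the whole content of the lemma: you must show that between any two cells there is a \emph{shortest} dual-lattice path --- of length exactly $\hop_2(C^-_m,C^+_m)$ --- consisting of at most two straight runs. Your plan writes down an explicit L-shaped candidate, but you never obtain the matching lower bound on $\hop_2$, i.e.\ you never exclude that some staircase path is strictly shorter than the L-shaped one; without that, $\|\tilde z^m\|_1$ could exceed $\|z^m\|_1=\hop_2(C^+_m,C^-_m)$ and the \DMCP~would be lost, so the conclusion fails for your construction. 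Note also that the dual graph is the honeycomb graph, whose cells come in two orientations: the barycentre difference $x^{C^+_m}-x^{C^-_m}$ is in general \emph{not} an integer combination $p\,a_i+q\,a_{i+1}$, and the graph distance is not ``essentially $p+q$'' without parity/orientation corrections, so the formula you lean on itself requires proof. Your suggested fallback (check directly that the bonds crossed by the two line segments form a valid sum of two straight cuts homologous to $z^m$) establishes homology but still says nothing about minimality of length; and the corner where the two legs meet needs checking against the sign constraint in the definition of a straight cut, so that $\tilde z^m$ has boundary $\pm 1$ only at $C^\pm_m$.

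For comparison, the paper proves precisely this combinatorial core in Appendix \ref{app:straight_cuts}: dual paths are encoded as words in the hop operators $H_1,\dots,H_6$, and rewriting identities (e.g.\ $H_iH_{i-2}C=H_{i-1}^2C$, commutation of $H_i^2$ with $H_j$, $H_{i+3}H_i$ being the identity) are used to transform an arbitrary \emph{shortest} word, without changing its length, into the monotone form $H_{i+1}^{N-k}H_i^k$, which is exactly two straight runs; minimality is thus inherited from the path one starts with rather than re-proved for a new candidate. If you prefer your coordinate-geometry route, you would need to supply an equivalent argument --- for instance a crossing-number lower bound on $\hop_2$ over the three bond-direction families together with the up/down-triangle parity bookkeeping --- which is comparable in length to the paper's operator computation. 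As it stands, the proposal assumes the hard part.
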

\begin{proof}
  The idea is to show that we may always find a shortest path in the
  dual lattice between any pair of cells which is made up of 2
  straight segments. It is intuitively clear from Figure
  \ref{fig:typ_conf} that this can always be done. 

  A complete proof is postponed until Appendix \ref{app:straight_cuts}.
\end{proof}

\subsection{Shifting the Origin}
\label{sec:orig.shift}
Suppose that $y = \yh + u$, where $u \in \Hsi$ satisfies the \DMCP~\eqref{eq:DMCP}.
%
%
For any $C\in\Cells$, we define the affine transformation
\begin{equation}
  F^C\xi := \cases{
    \xi + x^C & \text{ if the triangle $C$ points upwards, } \\
    \mR_6(\xi+x^C) & \text{ if the triangle $C$ points downwards,}
  }
  \label{eq:FC_defn}
\end{equation}
where $\mR_6$ denotes the rotation through angle $\pi/3$. Thus, $F^C$
maps the lattice onto the lattice, and $C_0$ onto the cell $C$.

Next, let
\begin{equation}
  u^C := u\circ F^C\,+\,\yh\circ F^C\,-\,\yh.
  \label{eq:uC_defn}
\end{equation}
It follows that $\yh(\xi)+u^C(\xi)=\yh(F^C\xi)+u(F^C\xi)$ for all
lattice points $\xi\in\L$, so that there are corresponding bond length
1-forms $\alpha^C\in[D\yh+Du^C]$ satisfying 
\begin{equation*}
  \alpha^C=\alpha\circ F^C.
\end{equation*} 
As before, define $\beta^C := \alpha^C - \alh$.  According to
these definitions,
\begin{align*}
  \yh(F^C\xi)-\yh(\xi):=\smfrac1{2\pi}\b(\arg(\xi+x^C)-\arg(\xi)\b);
\end{align*}
if we make this function single--valued by introducing a compact polygonal
branch cut passing through the barycentres
of a shortest dual lattice path between $C_0$ and $C$, then it is a
straightforward exercise to show that
\begin{align*}
  |D(\yh\circ F^C)_b - D\yh_{b}| \lesssim \d{b}^{-2};
\end{align*}
therefore $u^C\in\Hsi$, and
\begin{align*}
  \E(u^C)&=E\b(\yh+u^C;\yh\b)\\
   &= E\b(\yh\circ F^C+u\circ F^C;\yh\b)\\
   &= E\b(\yh\circ F^C+u\circ F^C;\yh\circ F^C\b)+E\b(\yh\circ F^C;\yh\b)\\
   &= \E(u),
\end{align*}
noting that the the first term on the third line is simply a
resummation of $\E(u)$, and the second term vanishes. 

For each $C \in \Coresp[\alpha]$, we can replace $u^C$ with
$\tilde{u}^C = u^C + U$ for some $U : \L \to \Z$, such that $\| D u^C
+ D U - \beta^C \|_1$ is minimal, i.e. $\tilde{u}^C$ satisfies the
\DMCP~\eqref{eq:DMCP}. We obtain that
\begin{displaymath}
 \E(\tilde{u}^C) = \E(u^C) =
\E(u). 
\end{displaymath}

To summarize, we have constructed a corrector displacement
$\tilde{u}^C \in \Hsi$ with the same energy as $u$, but for which $C
\in \Coresp[\alpha]$ has been shifted to the origin. Upon minimising
$\| D\tilde{u}^C - \beta^C \|_1$ amongst all choices $C \in
\Coresp[\alpha]$, we obtain the following result.

\begin{lemma}
  Let $v \in \Hsi$, then there exists $u \in \Hsi$ such that $\E(u) =
  \E(v)$ and such that the {\em discrete optimal connection property}
  holds:

  {\bf (\DOCP)} There exists $\alpha \in [D(\yh+u)]$ such that
  \begin{equation}
    \label{eq:DOCP}
    \| D u - \beta \|_1 = \min_{C \in \Coresp[\alpha]} \min_{U : \L \to
      \Z} \| Du^C + D U - \beta^C \|_1,
  \end{equation}
  where $\beta = \alpha - \alh$, $u^C$ is defined by
  \eqref{eq:uC_defn} and $\beta^C=\alpha\circ F^C-\alh$, where $F^C$
  is defined in~\eqref{eq:FC_defn}.
\end{lemma}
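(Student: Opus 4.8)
The plan is to obtain $u$ from $v$ by two successive reductions: first the vertical-shift (branch-cut shortening) reduction of \S\ref{sec:dips_branch}, and then the origin-shift construction of \S\ref{sec:orig.shift}, optimised over the finitely many positive cores.

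\emph{Step 1.} Given $v \in \Hsi$, recall from the proof of Theorem~\ref{th:ex_nec_disl} that, since $Dv \in \ell^2(\Bonds)$, for $\alpha \in [D(\yh+v)]$ one has $\alpha_b = \alh_b + Dv_b + z_b$ with $z$ compactly supported and integer-valued; in particular $z \in \ell^1(\Bonds)$, so a minimiser of $U \mapsto \|Dv + DU - \beta\|_1$ over $U : \L \to \Z$, $U \in \Usz$, exists. Replacing $v$ by $\tilde v := v + U$ for such a $U$, periodicity $(\psi1)$ of $\psi$ gives $\E(\tilde v) = \E(v)$, and $\tilde v$ satisfies the \DMCP~\eqref{eq:DMCP}. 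Let $\alpha \in [D(\yh + \tilde v)]$ be the associated bond length 1-form; since $\alpha_b \to 0$ as $\d{b} \to \infty$, the first Proposition of \S\ref{sec:burgers_vec} shows $\Cores[\alpha]$, and hence $\Coresp[\alpha]$, is finite.

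\emph{Step 2.} For each $C \in \Coresp[\alpha]$, form $\tilde v^C := \tilde v \circ F^C + \yh \circ F^C - \yh$ as in \eqref{eq:uC_defn}, with $F^C$ given by \eqref{eq:FC_defn}. The computation preceding the statement shows $\tilde v^C \in \Hsi$, $\E(\tilde v^C) = \E(\tilde v) = \E(v)$, with associated bond length 1-form $\alpha \circ F^C$ and $\beta^C = \alpha \circ F^C - \alh$. Reduce $\tilde v^C$ by a further compactly supported integer shift so that $\tilde u^C$ satisfies the \DMCP, i.e.\ $\|D\tilde u^C - \beta^C\|_1 = \min_{U : \L \to \Z} \|D\tilde v^C + DU - \beta^C\|_1$; again $\E(\tilde u^C) = \E(v)$.

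\emph{Step 3.} As $\Coresp[\alpha]$ is finite, pick $C^\star \in \Coresp[\alpha]$ minimising $C \mapsto \|D\tilde u^C - \beta^C\|_1$ and set $u := \tilde u^{C^\star}$, $\alpha^{C^\star} := \alpha \circ F^{C^\star} \in [D(\yh+u)]$, $\beta := \alpha^{C^\star} - \alh$. To verify \eqref{eq:DOCP} for $u$, note $\Coresp[\alpha^{C^\star}] = (F^{C^\star})^{-1}\Coresp[\alpha]$, and for $C' \in \Coresp[\alpha^{C^\star}]$ the displacement $u^{C'}$ built from $u$ via \eqref{eq:uC_defn} equals $\tilde v^{C}$, with $C$ the corresponding core in $\Coresp[\alpha]$, up to a compactly supported integer shift and a fixed lattice symmetry fixing the origin — neither of which changes the $\ell^1$-norms appearing in \eqref{eq:DOCP} — and similarly $\beta^{C'}$ corresponds to $\beta^{C}$. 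Hence
\[
  \min_{C' \in \Coresp[\alpha^{C^\star}]} \min_{U : \L \to \Z} \|Du^{C'} + DU - \beta^{C'}\|_1
  = \min_{C \in \Coresp[\alpha]} \min_{U : \L \to \Z} \|D\tilde v^{C} + DU - \beta^{C}\|_1
  = \min_{C \in \Coresp[\alpha]} \|D\tilde u^{C} - \beta^{C}\|_1,
\]
the last step by the \DMCP~minimality of Step~2. By the choice of $C^\star$ this equals $\|D\tilde u^{C^\star} - \beta^{C^\star}\|_1 = \|Du - \beta\|_1$, which is exactly \eqref{eq:DOCP}, and $\E(u) = \E(v)$.

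\emph{Main obstacle.} The one delicate point is the bookkeeping in Step~3: one must check that the composition $F^{C^\star}\circ F^{C'}$ agrees with $F^{C}$ (for $C = F^{C^\star}C'$) up to a lattice symmetry fixing the origin, so that, modulo integer shifts, $\{u^{C'} : C' \in \Coresp[\alpha^{C^\star}]\}$ reproduces $\{\tilde v^{C} : C \in \Coresp[\alpha]\}$ and the double minimum in \eqref{eq:DOCP} is genuinely attained by the construction. Everything else — invariance of $\E$ under integer shifts and under the $F^C$, and finiteness of the core set — is already available from \S\ref{sec:dips_branch}–\S\ref{sec:orig.shift}.
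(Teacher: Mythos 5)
Your proposal is correct and follows essentially the same route as the paper, which obtains the lemma directly from the construction in \S\ref{sec:orig.shift}: vertical shifts to enforce the \DMCP, origin shifts $F^C$ to each of the finitely many positive cores (each preserving $\E$), and selection of the core minimising $\|D\tilde{u}^C-\beta^C\|_1$. The additional bookkeeping you carry out in Step~3 — that $F^{C^\star}\circ F^{C'}$ agrees with $F^{C}$ up to a lattice rotation fixing the origin (which leaves $\alh$ and all $\ell^1$-norms invariant) and an integer-valued discrepancy absorbed by the minimisation over $U:\L\to\Z$ — is exactly the consistency check the paper leaves implicit, and it is sound.
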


\medskip

The crucial property that we obtain from the \DOCP~\eqref{eq:DOCP} is a
bound on the distance between the necessary core at $C_0$ and all negative cores.

\begin{lemma}
  \label{th:wk_hop_min}
  Suppose $u\in\Hsi$ satisfies the \DOCP~\eqref{eq:DOCP} and let $z =
  \sum_{m = 1}^{M} z^m$ according to~\eqref{eq:z_decomp}. Then, 
  \begin{equation}
    \hop_2(C_0,C^-_m) \geq \hop_2(C^+_m,C^-_m), \qquad \text{for } m =
    1, \dots, M, \label{eq:wk_hop_minimality}
  \end{equation}
  where we recall that $z^m$ connects the cores
  $C^+_m\in\Coresp[\alpha]$ and $C^+_m\in\Coresm[\alpha]$.
\end{lemma}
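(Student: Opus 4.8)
The plan is to exploit the \DOCP~\eqref{eq:DOCP} directly, translating it from a statement about $\ell^1$-norms of $1$-forms into the hopping-distance inequality \eqref{eq:wk_hop_minimality}. First I would recall from Corollary \ref{th:short_dual_latt} that, for a $u$ satisfying the \DMCP, we have the orthogonal-type splitting $\|z\|_1 = \sum_{m=1}^M \|z^m\|_1$ with $\|z^m\|_1 = \hop_2(C^+_m,C^-_m)$, and that each $z^m$ is (identified with) a shortest dual-lattice path joining $C^+_m$ to $C^-_m$. The left-hand side of \eqref{eq:DOCP} is therefore exactly $\sum_{m=1}^M \hop_2(C^+_m,C^-_m)$.

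Next I would analyse the right-hand side of \eqref{eq:DOCP} for the particular competitor obtained by choosing $C = C^+_{m_0}$ for a fixed index $m_0$, i.e. by shifting the core $C^+_{m_0}$ to the origin via $F^{C^+_{m_0}}$. Under this map the collection of cores of $\beta^C = \alpha\circ F^C - \alh$ is the image under $(F^C)^{-1}$ of the cores of $\beta$ (up to the boundary-case modifications of Remark \ref{rem:alpha_ambig}), so the positive core $C^+_{m_0}$ is carried to $C_0$ while the other positive cores and all negative cores are translated rigidly. The minimal value of $\|Du^C + DU - \beta^C\|_1$ over integer shifts $U$ is again, by Corollary \ref{th:short_dual_latt} applied to $u^C$, a sum of hopping distances between matched positive/negative cores of $\beta^C$. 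Because hopping distance is invariant under the lattice isometry $F^C$ (it only depends on the dual-graph structure, which $F^C$ preserves), for \emph{every} matching of the positive cores of $\beta^C$ to the negative cores we get a competitor value $\sum_m \hop_2(\hat C^+_m, \hat C^-_m)$ where now one of the positive cores is $C_0$. In particular, keeping the same pairing of dipoles as in the original $z$-decomposition but with $C^+_{m_0}$ replaced by $C_0$ gives the competitor value
\begin{displaymath}
  \hop_2(C_0, C^-_{m_0}) + \sum_{m \neq m_0} \hop_2(C^+_m, C^-_m).
\end{displaymath}

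The \DOCP~\eqref{eq:DOCP} now says the left-hand side does not exceed this competitor, i.e.
\begin{displaymath}
  \sum_{m=1}^M \hop_2(C^+_m,C^-_m) \leq \hop_2(C_0, C^-_{m_0}) + \sum_{m \neq m_0} \hop_2(C^+_m, C^-_m),
\end{displaymath}
and cancelling the common terms yields $\hop_2(C^+_{m_0}, C^-_{m_0}) \leq \hop_2(C_0, C^-_{m_0})$, which is exactly \eqref{eq:wk_hop_minimality} for $m = m_0$; since $m_0$ was arbitrary, this finishes the proof. The main obstacle — and the point requiring genuine care — is the bookkeeping when passing from $u$ to $u^{C^+_{m_0}}$: one must verify that the pairing of positive and negative cores of $\beta$ can be transported through $F^{C^+_{m_0}}$ to a valid pairing for $\beta^{C^+_{m_0}}$ realising the claimed competitor $\ell^1$-cost, that the integer shift $U$ needed to reconnect the branch cuts does not change the matched-core structure (only the cuts between them, whose total length is controlled by Corollary \ref{th:short_dual_latt}), and that the borderline ambiguities flagged in Remark \ref{rem:alpha_ambig} (adjacent cores, $\alpha_b \in \{\pm 1/2\}$) do not spoil the counting. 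Once one accepts that $\hop_2$ is an isometry invariant and that \DMCP-minimal cuts between a fixed set of paired cores always sum to $\sum_m \hop_2$, the inequality is immediate.
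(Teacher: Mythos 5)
Your proposal is correct and follows essentially the same route as the paper: the paper also tests the \DOCP~against the competitor obtained by shifting $C^+_m$ to the origin and rerouting the $m$-th cut so that it joins the image of the old origin to that of $C^-_m$ along a shortest dual path, only phrased as a contradiction rather than as a direct inequality with cancellation of the common terms. The realisation step you flag --- constructing the integer shift that implements the re-matched cuts, where the key bookkeeping point is that the shifted form $\beta^{C^+_m}$ has its relevant positive core at the image of the \emph{old} origin (the core arriving at the new origin cancels against $\alh$, which is why the competitor value involves $\hop_2(C_0,C^-_m)$ and the branch cut of $\yh\circ F^{C^+_m}-\yh$ must be absorbed) --- is exactly what the paper supplies via the characteristic function of the region bounded by the closed dual curve formed by that branch cut, a shortest path to the image of $C^-_m$, and the support of $z^m\circ F^{C^+_m}$.
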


\begin{proof}
  Suppose the converse for contradiction. Then there exists $m$ and a
  dual lattice path connecting $C_0$ to $C^-_m$ which is strictly shorter
  than $\hop_2(C^+_m,C^-_m)$.  Letting $F := F^{C_m^+}$ and $v := u^{C_m^+}$,
  \begin{equation*}
    Dv-\beta\circ F = z\circ F+\tilde{z} = \sum_{m} z^{m}\circ F + \tilde{z},
  \end{equation*}
  where $\tilde{z}$ is the contribution coming from the branch cut in
  $\yh\circ F-\yh$.  Consider the closed curve passing from
  $FC^+_m=C_0$ to $FC_0$ along the branch cut, then along a shortest
  lattice path between $FC_0$ and $FC^-_m$, and then back to $FC^+_m$
  along the support of $z^m\circ F$. This is a closed curve, and by a
  similar argument to that in Corollary \ref{th:short_dual_latt}, we
  can define $w\in\Usz$ as $w(\xi)=1$ for $\xi\in\L$ inside the
  curve, and $0$ outside.  It can then be checked that $v+w$ has a
  corresponding $\bar{z}$ which satisfies
  \begin{equation*}
    \|\bar{z}\|_1 = \|z\circ F\|-\|z^m\circ F\|+\hop_2(C_0,C^-_m)=
      \sum_{i\neq m} \|z^i\|_1+\hop_2(C_0,C^-_m)<\|z\|_1,
  \end{equation*}
  the required contradiction.
\end{proof}

As a corollary we obtain the following stronger property.

\begin{corollary}
  \label{th:str_hop_min}
  Suppose $u\in\Hsi$ satisfies the \DOCP~\eqref{eq:DOCP} and let $z =
  \sum_{m = 1}^{M} z^m$ according to~\eqref{eq:z_decomp}. Then, for
  any $m \in \{1,\dots,M\}$ and for any cell $C \in \Cells$ such that
  $z^m_b\neq0$ for some $b \in \partial C$,
  \begin{equation}
    \hop_2(C_0,C)\geq \hop_2(C^+_m,C). \label{eq:str_hop_min}
  \end{equation}
\end{corollary}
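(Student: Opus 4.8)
The plan is to derive \eqref{eq:str_hop_min} from Lemma~\ref{th:wk_hop_min} by exploiting the fact, established in Corollary~\ref{th:short_dual_latt}, that $z^m$ is identified with a \emph{shortest} path in the dual lattice joining $C^+_m$ and $C^-_m$. The first step is a bookkeeping observation: if $z^m_b \neq 0$ for some $b \in \partial C$, then $C$ lies on this dual path. Indeed, by Lemma~\ref{lem:z_decomp} the support of $z^m$ is $\{\pm b_i : i = 0,\dots,n\}$, and the proof of Corollary~\ref{th:short_dual_latt} exhibits cells $C_{m,i}$ with $b_i, -b_{i+1} \in \partial C_{m,i}$ which are precisely the vertices of the dual path; since every bond $b_i$ is shared by two consecutive path-cells, any cell adjacent to a bond of $\supp(z^m)$ is one of the $C_{m,i}$.

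The second step is the elementary fact that a sub-path of a dual-lattice geodesic is itself a geodesic. Because the path associated with $z^m$ realises $\hop_2(C^+_m, C^-_m)$, for any cell $C$ on it the initial segment from $C^+_m$ to $C$ and the terminal segment from $C$ to $C^-_m$ are shortest dual-lattice paths, and hence
\begin{equation*}
  \hop_2(C^+_m, C) + \hop_2(C, C^-_m) = \hop_2(C^+_m, C^-_m).
\end{equation*}

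The final step is the contradiction argument. Suppose \eqref{eq:str_hop_min} fails, so that $\hop_2(C_0, C) < \hop_2(C^+_m, C)$ for some $C$ as in the statement. By the triangle inequality \eqref{eq:hop_tri_ineq} together with the identity just displayed,
\begin{equation*}
  \hop_2(C_0, C^-_m) \leq \hop_2(C_0, C) + \hop_2(C, C^-_m) < \hop_2(C^+_m, C) + \hop_2(C, C^-_m) = \hop_2(C^+_m, C^-_m),
\end{equation*}
which contradicts \eqref{eq:wk_hop_minimality} in Lemma~\ref{th:wk_hop_min}. I do not anticipate a genuine obstacle here: the only items requiring (minor) care are the identification of bond-adjacent cells with vertices of the geodesic and the sub-geodesic property, both of which follow immediately from the constructions already in place.
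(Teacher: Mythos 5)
Your argument is correct and follows essentially the same route as the paper: it combines Lemma~\ref{th:wk_hop_min}, the fact that $C$ lies on the geodesic realising $\hop_2(C^+_m,C^-_m)$ (so that $\hop_2(C^+_m,C)+\hop_2(C,C^-_m)=\hop_2(C^+_m,C^-_m)$), and the triangle inequality \eqref{eq:hop_tri_ineq}; phrasing it as a contradiction rather than a direct chain of inequalities is an immaterial difference.
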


\begin{proof}
  Lemma \ref{th:wk_hop_min} states that, if $C^-\in\Coresm$ and
  $C^+\in\Coresp$ are connected by $z^m$, then
  \begin{equation*}
    \hop_2(C_0,C^-)\geq \hop_2(C^+,C^-).
  \end{equation*}
  It is clear that any subpath of a shortest path in a graph is also a shortest
  path. By the construction of $z^m$, $C$ lies on a shortest path between $C^+$
  and $C^-$, and therefore
  \begin{equation*}
     \hop_2(C^+,C^-)=\hop_2(C^+,C)+\hop_2(C,C^-).
  \end{equation*}
  The triangle inequality for paths \eqref{eq:hop_tri_ineq} now directly implies
  \begin{equation*}
    \hop_2(C_0,C)\geq \hop_2(C^+,C). \qedhere
  \end{equation*}
\end{proof}

\subsection{Estimating \texorpdfstring{$\<\del \E(0),u\>$}{<dE(0),u>}}

In \S\ref{sec:dips_branch} and \S\ref{sec:orig.shift}, we showed that
for any $u\in\Hsi$, we can find $\tilde{u}\in\Hsi$ such that
$\E(u)=\E(\tilde{u})$, and for which the corresponding branch cuts $z$
satisfy the \DOCP \eqref{eq:DOCP}.  We are now in a
position to exploit the chosen structure of $z$ to derive
compactness for minimising sequences.

Our first step is to provide a stronger bound on
$\del\E(0)$. We have already shown that $|\< \del\E(0), u \>|
\lesssim \| D u \|_{\ell^2}$, but this will not be sufficient
  since our estimates so far only provide a bound
  on $\beta$, and not on $Du$ itself. Therefore
we need to estimate $|\< \del\E(0), u \>|$ only in terms of $\|
\beta\|_2$.

\begin{lemma} \label{th:bnd_delE_beta}
  For each $u \in \Hsi$, let $\beta_u := \beta$ be defined
  through~\eqref{eq:defn_beta}.

  There exists a constant $C > 0$ such that
  \begin{displaymath}
    \label{eq:Frcs_bnd}
    \b\< \del\E(0), u \b\> \leq C \|  \beta_u \|_2 \qquad \forall u
    \in \Hsi \quad \text{satisfying the \DOCP~\eqref{eq:DOCP}.}
  \end{displaymath}
\end{lemma}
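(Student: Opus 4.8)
The plan is to bound $\langle\delta\E(0),u\rangle$ by splitting $u$ into a part that is controlled by $\|\beta_u\|_2$ and a part supported near the branch cuts, which is controlled using the structural properties of $z$ established in \S\ref{sec:dips_branch} and \S\ref{sec:orig.shift}. Recall from \eqref{eq:delE_frc} that $\langle\delta\E(0),u\rangle = \sum_{\xi\in\L} f(\xi) u(\xi)$, with $|f(\xi)|\lesssim|\xi|^{-3}$ by Lemma \ref{th:decay_frc_yh}. The difficulty is that $u$ itself need not be in $\ell^2$-type norms controlled by $\beta$; only $Du = \beta + z$ is, and $z$ (though compactly supported) can have large support with large $\ell^1$ norm when dipoles are long, so a naive estimate fails.

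The first step is to apply summation by parts (or a discrete Green's identity) to rewrite $\sum_\xi f(\xi)u(\xi)$ in terms of $Du$ against a discrete potential $\Phi$ solving a discrete Poisson-type problem with right-hand side $f$; since $|f(\xi)|\lesssim|\xi|^{-3}$ is summable with a finite first moment, $\Phi$ will be bounded (or grow only logarithmically) with $|\nabla\Phi_b|\lesssim\d{b}^{-1}$ or better, decaying like the far-field of a discrete single-layer potential. Then $\langle\delta\E(0),u\rangle = \sum_b \Phi_b Du_b = \sum_b \Phi_b\beta_b + \sum_b \Phi_b z_b$. The first sum is bounded by $\|\Phi\|_2\|\beta\|_2$; one needs $\Phi\in\ell^2(\Bonds)$, which requires $|\Phi_b|$ to decay faster than $\d{b}^{-1}$ — this follows because $f$ has vanishing total mass and first moment (by the symmetry cancellations already exploited in the proof of Lemma \ref{th:decay_frc_yh}, $\sum_\xi f(\xi) = 0$ and $\sum_\xi \xi f(\xi)=0$), so $\Phi$ decays like a discrete dipole/quadrupole potential, i.e. $|\Phi_b|\lesssim\d{b}^{-2}$, which is in $\ell^2$. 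Thus $|\sum_b\Phi_b\beta_b|\leq C\|\beta\|_2$.

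The second step, and the main obstacle, is to bound $\sum_b \Phi_b z_b$ by $C\|\beta\|_2$. Here I use the decomposition $z=\sum_{m=1}^M z^m$ from Lemma \ref{lem:z_decomp}, the straight-cuts property (Lemma \ref{th:straight_cuts}), and crucially the \DOCP~\eqref{eq:DOCP} together with Lemma \ref{th:wk_hop_min}/Corollary \ref{th:str_hop_min}. Each $z^m$ connects $C^+_m$ to $C^-_m$ along a shortest dual path, and \eqref{eq:str_hop_min} guarantees that every cell on this path is at least as far (in hopping distance, hence comparably in Euclidean distance) from the origin as it is from $C^+_m$. Combining this with the decay $|\Phi_b|\lesssim\d{b}^{-2}$, the contribution $|\sum_{b}\Phi_b z^m_b|$ along the $m$-th cut is controlled by $\sum_{b\in z^m}\d{b}^{-2}$, which, using that the cut is made of at most two straight segments and the distance lower bound, is bounded by a constant times $\log(2+\d{C^-_m})/\d{C^-_m}$ or even a summable-in-$m$ quantity once one also invokes that the elastic energy lower bound \eqref{eq:lb:0} forces the dipole lengths to be comparable to contributions to $\|\beta\|_2^2$ (each dipole of length $\ell$ contributes $\gtrsim\log\ell$ to $\|\beta\|_2^2$). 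Summing over $m$ and using Cauchy–Schwarz on the dipole contributions then yields $|\sum_b\Phi_b z_b|\leq C\|\beta\|_2$.

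Putting the two bounds together gives $|\langle\delta\E(0),u\rangle|\leq C\|\beta_u\|_2$, as claimed. I expect the genuinely delicate point to be making the dipole-sum estimate uniform: one must carefully balance the logarithmic growth of the number/length of cuts against the polynomial decay of $\Phi$ and the quadratic elastic cost of each dipole recorded in $\|\beta\|_2^2$, and the \DOCP-based comparison $\hop_2(C_0,C)\geq\hop_2(C^+_m,C)$ is exactly what prevents a long cut from passing close to the origin where $\Phi$ is large. The geometric bookkeeping of how many cells of a straight cut lie in each dyadic annulus around the origin, given that lower bound, is the technical heart of the argument.
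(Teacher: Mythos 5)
Your overall architecture (dualise $\langle\del\E(0),u\rangle=\sum_\xi f(\xi)u(\xi)$ against a decaying 1-form and then treat the $\beta$- and $z$-parts of $Du$ separately) is genuinely different from the paper, which never introduces a potential: it bounds $|u(\xi)|$ directly by integrating $Du=\beta+z$ along the explicit lattice path $\Gamma_\xi$, obtaining $|u(\xi)|\lesssim|\xi|^{1/2}\|\beta\|_2+|\int_{\Gamma_\xi}z|$, controls the crossing term by $\min\{|\xi|^2,\#\Coresp[\alpha]\}$ (Lemma \ref{lem:z_crossings}, which is where the straight-cuts property and the \DOCP{} enter), converts $\#\Cores[\beta]\leq 3\|\beta\|_2^2$ via \eqref{eq:bnd_cores_beta}, and then splits the sum over $\xi$ at $|\xi|=\|\beta\|_2$. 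Your route could in principle be made to work, but as written it has two genuine gaps. First, the existence of a discrete potential with the pointwise decay $|\Phi_b|\lesssim\d{b}^{-2}$ is asserted rather than proved: the vanishing of the first moment of $f$ is \emph{not} something ``already exploited'' in Lemma \ref{th:decay_frc_yh} (that proof only uses the cancellation $a_{i+3}=-a_i$ within each $\Rg_\xi$), and with $|f(\xi)|\lesssim|\xi|^{-3}$ the first moment is only conditionally convergent, so its vanishing (via the three-fold symmetry of $\yh$) and the resulting decay of the lattice Green's function gradient require real work that is nowhere in the paper.

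Second, and more seriously, your bound on $\sum_b\Phi_b z_b$ rests on the claim that each dipole of length $\ell$ contributes $\gtrsim\log\ell$ to $\|\beta\|_2^2$. This is not a consequence of \eqref{eq:lb:0}, is not established anywhere in the paper, and is exactly the type of statement the paper deliberately avoids: for generic configurations the dipole fields interact and can screen one another, which is why the paper only ever uses the much weaker per-core bound \eqref{eq:bnd_single_core_beta}--\eqref{eq:bnd_cores_beta}. You also mischaracterise what the \DOCP{} buys: Corollary \ref{th:str_hop_min} does \emph{not} prevent a long cut from passing close to the origin (cells near $C^+_m$ are unconstrained); what it does is force the positive core of any cut passing within distance $R$ of the origin to lie within distance $\sim R$ of the origin, so that at most $\lesssim R^2$ cuts can come that close --- this counting is precisely the paper's Lemma \ref{lem:z_crossings}. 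If you replace your log-per-dipole claim by this counting (each straight cut at distance $d_m$ from the origin contributes $\lesssim(1+d_m)^{-1}$ against $|\Phi_b|\lesssim\d{b}^{-2}$, and the number of cuts with $d_m\leq R$ is $\lesssim R^2$ while the total is $M\lesssim\|\beta\|_2^2$, giving $\sum_m(1+d_m)^{-1}\lesssim\sqrt{M}\lesssim\|\beta\|_2$), the second half of your argument closes without the unproven claim; but as proposed, both the potential-decay step and the dipole-energy step are unsupported.
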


We provide the proof of this fundamental estimate throughout the
remainder of this section.

Recall the definition of $\xi_0$ from \S\ref{sec:bonds}.
For any $\xi\in\L$, it is always
possible to express the difference $\xi-\xi_0$ as
\begin{equation*}
  \xi-\xi_0 = na_i+ma_{i+1},
\end{equation*}
for some nearest neighbour lattice direction $a_i$ and some $n,m\in\N\cup\{0\}$ with $n\neq0$
unless $\xi=\xi_0$. We then define the path $\Gamma_\xi$ to be
\begin{equation*}
  \Gamma_\xi:=\sum_{j=0}^{n-1} \b(\xi_0+ja_i,\xi_0+(j+1)a_i\b)+\sum_{j=0}^{m-1}
    \b(\xi_0+na_i+ja_{i+1},\xi_0+na_i+(j+1)a_{i+1}\b);
\end{equation*}
cf. Figure \ref{fig:gammaxi}.  Integrating $Du_b$ along $\Gamma_\xi$,
we obtain
\begin{align}
  |u(\xi)| &= \bg|\int_{\Gamma_\xi}Du\,\bg|=\bg|\int_{\Gamma_\xi}\beta+z\bg|\notag\\
    &\leq |\Gamma_\xi|^{1/2}\bg(\sum_{b\in\Gamma_\xi}|\beta_b|^2\bg)^{1/2}
    + \Bg| \int_{\Gamma_\xi} z\,\Bg|\notag\\
    &\lesssim |\xi|^{1/2} \|\beta\|_2 + \Bg| \int_{\Gamma_\xi} z\,\Bg|,
  \label{eq:u_est}
\end{align}
using the Cauchy-Schwarz inequality. We now bound the final term in
\eqref{eq:u_est}.

\begin{lemma}
\label{lem:z_crossings}
Suppose $u\in\Hsi$ satisfies the \DOCP~\eqref{eq:DOCP} and the
straight cuts property (cf. Lemma \ref{th:straight_cuts}); then
\begin{equation*}
  \bg|\int_{\Gamma_\xi} z \,\bg|\lesssim \min\b\{|\xi|^2,
  \#\Coresp[\alpha] \b\}.
\end{equation*}
\end{lemma}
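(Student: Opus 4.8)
The plan is to bound $|\int_{\Gamma_\xi} z|$ by counting how many times the path $\Gamma_\xi$ crosses the support of $z = \sum_{m=1}^M z^m$, and bounding each crossing count in two complementary ways. Recall from \eqref{eq:length_decomp} and the \DMCP~that the $z^m$ do not cancel, so $|\int_{\Gamma_\xi} z| \leq \sum_{m=1}^M |\int_{\Gamma_\xi} z^m|$, and it suffices to show that $\sum_m |\int_{\Gamma_\xi} z^m| \lesssim \min\{|\xi|^2, M\}$ (recall $M = \#\Coresp[\alpha]$). Since $\Gamma_\xi$ is a fixed path built from at most two straight segments, $|\int_{\Gamma_\xi} z^m|$ equals (up to sign and a bounded factor) the number of bonds of $\Gamma_\xi$ on which $z^m \neq 0$, i.e. the number of intersection points of $\Gamma_\xi$ with the support curve of $z^m$.

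First I would prove the bound $\lesssim M$. By the straight cuts property (Lemma~\ref{th:straight_cuts}), each $z^m$ is supported on at most two straight cuts, each lying along a line in a nearest-neighbour direction. Two straight line segments in fixed lattice directions cross at most a bounded number of times (in fact, two lines in distinct directions meet once, and a straight cut and a straight segment of $\Gamma_\xi$ overlap in a controlled way); hence $|\int_{\Gamma_\xi} z^m| \lesssim 1$ for each $m$, which upon summing over the $M$ dipoles gives $|\int_{\Gamma_\xi} z| \lesssim M$. The only subtlety is the case where a cut of $z^m$ runs parallel to a segment of $\Gamma_\xi$; then the ``crossing number'' must be read as the number of bonds shared, but since $\Gamma_\xi$ enters and leaves such a parallel stretch only at its endpoints and $z^m$ takes values in $\{-1,0,1\}$, the contribution to the \emph{integral} $\int_{\Gamma_\xi} z^m$ still telescopes to something bounded.

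Next I would prove the bound $\lesssim |\xi|^2$. Here the point is that $\Gamma_\xi$ lies entirely within the ball of radius $\lesssim |\xi|$ about $\xi_0$, so $|\Gamma_\xi| \lesssim |\xi|$, and any bond $b$ with $z_b \neq 0$ that meets $\Gamma_\xi$ must satisfy $\d{b} \lesssim |\xi|$. It therefore suffices to bound the number of bonds $b$ with $\d{b} \lesssim |\xi|$ and $z_b \neq 0$, with multiplicity counted by $|z_b|$. This is where the \DOCP~enters via Corollary~\ref{th:str_hop_min}: if $b \in \partial C$ with $z^m_b \neq 0$ and $\d{b} \lesssim |\xi|$, then $\hop_2(C_0, C) \lesssim |\xi|$, and by \eqref{eq:str_hop_min} also $\hop_2(C^+_m, C) \leq \hop_2(C_0, C) \lesssim |\xi|$; since $C$ lies on the shortest dual path realising $z^m$, this forces the \emph{whole} relevant portion of the cut $z^m$ near $\Gamma_\xi$ to stay within dual-distance $\lesssim |\xi|$ of $C_0$. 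Summing the lengths of these truncated cuts over all $m$, and using again that the $z^m$ do not overlap (so the total is at most the number of bonds in a ball of radius $\lesssim |\xi|$, which is $\lesssim |\xi|^2$), gives $|\int_{\Gamma_\xi} z| \lesssim |\xi|^2$. Combining the two bounds yields the claim.

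\textbf{Main obstacle.} The delicate step is the second bound: turning the \DOCP-based hopping inequality \eqref{eq:str_hop_min} into a genuine geometric statement that the portion of each branch cut $z^m$ that can possibly meet $\Gamma_\xi$ is confined to a region of dual-area $\lesssim |\xi|^2$, and then summing these contributions without double-counting. One must be careful that a single cut $z^m$ could in principle be very long (a far-away dipole) yet still pass near $\xi_0$; Corollary~\ref{th:str_hop_min} is exactly what rules this out, since it says every cell on $z^m$ that is hop-close to $C_0$ is at least as hop-close to $C^+_m$, pinning $C^+_m$ itself near $C_0$ and hence controlling the entire cut. Making this quantitative, and reconciling it cleanly with the Euclidean distance $\d{b}$ used to localise near $\Gamma_\xi$ (using that hopping distance and Euclidean distance are comparable up to constants on the triangular lattice), is the technical heart of the argument.
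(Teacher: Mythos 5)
Your reduction to $\sum_m |\int_{\Gamma_\xi} z^m|$ and your first bound are essentially the paper's argument: by Lemma \ref{th:straight_cuts} each $z^m$ consists of at most two straight cuts, each straight run of $\Gamma_\xi$ can cross a given straight cut at most a bounded number of times (the support bonds of a cut in direction $a_j$ point in the directions $a_{j\pm1},a_{j\pm2}$, so the parallel case contributes nothing), hence $|\int_{\Gamma_\xi} z^m|\lesssim 1$ and summing over $m$ gives the bound by $\#\Coresp[\alpha]$.

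The $|\xi|^2$ bound, however, contains a genuine gap: your key step asserts that ``the $z^m$ do not overlap'', so that the total mass of $z$ on bonds with $\d{b}\lesssim|\xi|$ is at most the number of bonds in that ball. This is false. The \DMCP, via \eqref{eq:length_decomp}, only guarantees that overlapping cuts do not \emph{cancel} (same sign on a shared bond); they can and do share bonds --- Figure \ref{fig:typ_conf} explicitly shows a bond with $z_b=2$, and in general $\|z\|_\infty$ can be as large as $\#\Coresp[\alpha]$. With overlaps allowed, your length-summing strategy does not give $|\xi|^2$: even after the correct repair (only cuts whose positive core is pinned within hopping distance $\lesssim|\xi|$ of $C_0$ can enter the ball, and each contributes its truncated length $\lesssim|\xi|$) one only obtains $|\xi|^3$ this way. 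The paper avoids this by counting \emph{cores} rather than bond-lengths: if $b_j\in\Gamma_\xi$ is the $j$-th bond of the path and $z^m_{b_j}\neq 0$, then $b_j\in\partial C^{b_j}$ with $\hop_2(C_0,C^{b_j})\lesssim j\lesssim|\xi|$, and \eqref{eq:str_hop_min} together with the triangle inequality \eqref{eq:hop_tri_ineq} pins the positive core $C^+_m$ of that cut inside a hopping ball about $C_0$ of radius $\lesssim|\xi|$, which contains $\lesssim|\xi|^2$ cells; by \eqref{eq:bvec_cell_leq_1} each cell holds at most one core, so at most $\lesssim|\xi|^2$ distinct cuts can meet $\Gamma_\xi$ at all, and each contributes $O(1)$ to the integral by your first bound. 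You state the pinning mechanism correctly in your ``main obstacle'' paragraph, but the proof as written replaces this core count by a bond count that the possible overlap of the $z^m$ invalidates; as it stands the second bound is not established.
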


\begin{proof}
First, we note that for any straight cut $z'$,
\begin{equation*}
  \bg|\int_{\Gamma_\xi} z' \bg|\leq1.
\end{equation*}
This follows from the fact that all $b\in\Bonds$ for which $z'_b=1$ can
be written as
\begin{equation*}
  (\xi+na_j,\xi+na_j+a_{j+1})\quad\text{or}\quad(\xi+na_j,\xi+na_j+a_{j+2})
\end{equation*}
for some $\xi\in\L$, $n\in\N$ and some nearest neighbour lattice direction
$a_j$, and the definition of $\Gamma_\xi$.
Using the straight cuts property, we find that
\begin{equation*}
  \bg|\int_{\Gamma_\xi} z
  \bg|\leq\sum_m\bg|\int_{\Gamma_\xi}z^m\bg|\leq2 \, \#\Coresp[\alpha].
\end{equation*}

To prove the second bound, enumerate the bonds $b_j\in\Gamma_\xi$,
beginning at with the bond $b_1=(\xi_0,\xi_0+a_i)$. Each
$b_j\in\partial C^{b_j}$ for some $C^{b_j}\in\Cells$. Since $u$
satisfies the \DOCP, Corollary~\ref{th:str_hop_min}, $z^m_{b_j}\neq0$
implies that if $z^m$ connects to $C^+\in\Coresp[\alpha]$, then
\begin{equation*}
  \hop_2(C^+,C^{b_j})\leq \hop_2(C_0,C^{b_j})\leq\hop_2(C_0,C^{b_0})+2j,
\end{equation*}
where we have repeatedly applied the triangle inequality
\eqref{eq:hop_tri_ineq}. Further application of the triangle
inequality implies that the sequence of sets
\begin{equation*}
  \b\{C\in\Cells\bsep \hop_2(C,C^{b_j})\leq \hop_2(C_0,C^{b_0})+2j\b\}
\end{equation*}
for $j=1,\ldots,|\Gamma_\xi|$ is increasing, and it is easy to see that
\begin{equation*}
  \#\b\{C\in\Cells\bsep \hop_2(C,C^{b_j})\leq \hop_2(C_0,C^{b_0})+2j\b\}
    \lesssim j^2.
\end{equation*}
Since by \eqref{eq:bvec_cell_leq_1} each cell can contain
at most one dislocation core, we must therefore have
\begin{equation*}
  \bg|\int_{\Gamma_\xi} z\bg| \lesssim |\Gamma_\xi|^2 \lesssim |\xi|^2. \qedhere
\end{equation*}
\end{proof}

\begin{figure}
  \includegraphics[height=6cm]{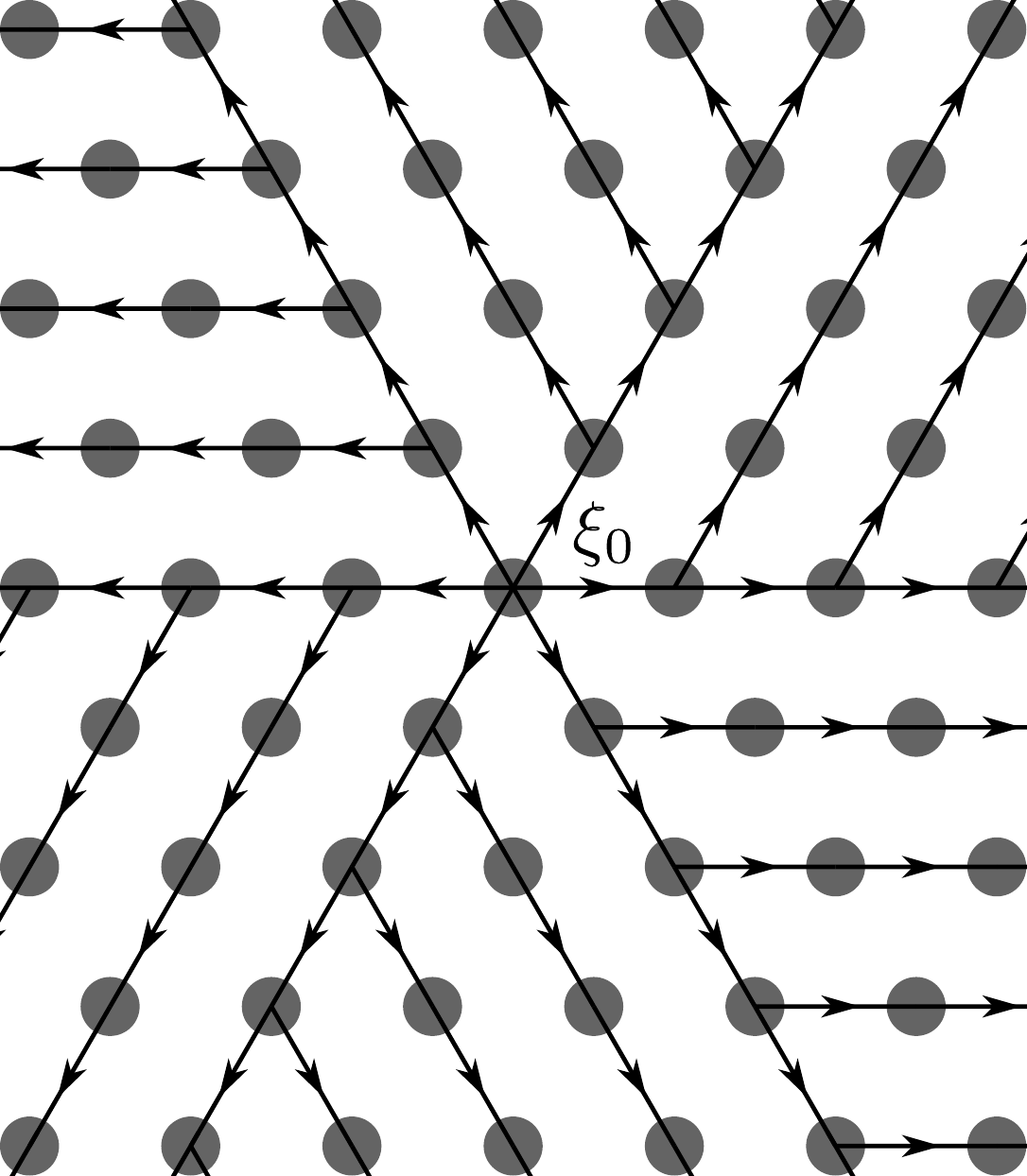}
  \caption{An illustration of the definition of $\Gamma_\xi$.}
  \label{fig:gammaxi}
\end{figure}

Next, we show that the number of cores can be bounded in terms of
$\beta$. This is intuitive, since, as discussed in \cite{Ponsiglione07},
each core stores a positive amount of elastic energy. Let $C \in \Cores[\beta]$,
then Jensen's inequality implies
\begin{equation}
  \label{eq:bnd_single_core_beta}
  \int_{\partial C} |\beta|^2 \geq \frac13 \bg|\int_{\partial C}\beta\bg|^2 = \frac13.    
\end{equation}
Hence, we obtain 
\begin{equation}
  \label{eq:bnd_cores_beta}
  \#\Cores[\beta] \leq 3 \| \beta \|_2^2.
\end{equation}

\medskip

\begin{proof}[Proof of Lemma \ref{th:bnd_delE_beta}]
  We now combine Lemma \ref{lem:z_crossings}, Lemma
  \ref{th:decay_frc_yh} and \eqref{eq:bnd_cores_beta} to estimate
  \begin{align*}
    |\<\del\E(0),u\>| &\leq \sum_{\xi\in\L} |f(\xi)| |u(\xi)|,\\
    &\lesssim \sum_{\xi\in\L} \B\{ \|\beta\|_2 |\xi|^{-5/2} +\min\b(\|\beta\|_2^2,|\xi|^2\b)|\xi|^{-3}\B\}.
  \end{align*}
  We note that the $|\xi|^{-5/2}\in\ell^1(\L)$, so that the first term
  is bounded above by $C\|\beta\|_2$. The second term splits into
  \begin{equation*}
    \sum_{\xi\in\L}\min\b(\|\beta\|_2^2,|\xi|^2\b)|\xi|^{-3}\leq \sum_{|\xi|\leq \|\beta\|_2}|\xi|^{-1}+
    \|\beta\|_2^2 \sum_{|\xi|> \|\beta\|_2} |\xi|^{-3}.
  \end{equation*}
  Straightforward radial estimates yield the bounds
  \begin{displaymath}
    \sum_{|\xi|\leq \|\beta\|_2}|\xi|^{-1}\lesssim \|\beta\|_2 \qquad
    \text{and} \qquad
    \sum_{|\xi|> \|\beta\|_2} |\xi|^{-3}\lesssim \|\beta\|_2^{-1}.
  \end{displaymath}
  Combining the previous estimates, we obtain the stated result.
\end{proof}

\subsection{Estimating \texorpdfstring{$\sum_{b\in\Bonds}z_b\psi'(\alh_b)$}{cut contributions}}
\label{sec:z_psi_est}
In this section, we estimate the second group in
\eqref{eq:lb:0}. Unlike in our previous estimates, which are generic,
we now resort to precise quantitative bounds based heavily on our
  assumption $(\psi 5)$.

\begin{lemma}
\label{lem:cut.lwr.bnd}
Suppose that $u\in\Hsi$ satisfies the \DOCP~\eqref{eq:DOCP}, then for any cut $z^m$
connecting to a dipole $C^+\in\Coresp[\alpha]$ to
$C^-\in\Coresm[\alpha]$, we have the lower bound:
\begin{equation}
  \sum_{b\in\Bonds}z^m_b\psi^\prime(\alh_b)\geq -\psi^{\prime\prime}(0)
    \frac{\arcsinh\b(2/\sqrt{3}\b)}{\pi} - c_0\,\hop_2(C_0,C^-)^{-1},
    \label{eq:zm_psi_bnd}
\end{equation}
where $c_0 > 0$ is independent of $u$.
\end{lemma}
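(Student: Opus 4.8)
The plan is to estimate $\sum_{b\in\Bonds}z^m_b\psi'(\alh_b)$ by converting the discrete sum along the cut into an integral of $\psi'(\alh)$ against the (nearly) straight branch cut, and then bounding that integral from below by a universal constant plus a decay-controlled remainder. First I would use the straight-cuts structure provided by Lemma \ref{th:straight_cuts}: each $z^m$ is the sum of at most two straight cuts, so it suffices to prove the bound for a single straight cut and sum the (at most two) contributions; in fact, since the dominant term $-\psi''(0)\arcsinh(2/\sqrt3)/\pi$ should come out of the worst geometric configuration, I expect the single-cut estimate to be the heart of the matter, with the two-cut case following by splitting the path at its corner and using that each segment's leading contribution is controlled by the same universal expression (or by noting only one segment is ``long'' and close to $C_0$). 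Next, recalling from Lemma \ref{th:alhb_intDyh} that $\alh_b=\int_0^1 \D\yh(\xi+ta_i)\cdot a_i\,\dt$, and Taylor-expanding as in the proof of Lemma \ref{th:decay_frc_yh}, I would write $\psi'(\alh_b)=\psi''(0)\alh_b + O(\d{b}^{-3})$, since $\psi'(0)=\psi'''(0)=0$ and $|\alh_b|\lesssim\d{b}^{-1}$ by \eqref{eq:alpha.hat.bnd}. Summing the $O(\d{b}^{-3})$ errors over the bonds of the cut, using that the cut lies at hopping distance at least $\hop_2(C_0,C^-)$ from the origin along its whole length (this is the content of the \DOCP\ via Corollary \ref{th:str_hop_min}), yields the $-c_0\,\hop_2(C_0,C^-)^{-1}$ remainder after a geometric/radial summation $\sum_{k\geq \hop_2(C_0,C^-)} k^{-3}\lesssim \hop_2(C_0,C^-)^{-2}$ — or, being slightly careful about how far the cut extends, $\hop_2(C_0,C^-)^{-1}$.

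The main term then reduces to estimating $\psi''(0)\sum_{b\in z^m}\alh_b = \psi''(0)\sum_{b\in z^m}\int_0^1\D\yh(\xi_b+ta_i)\cdot a_i\,\dt$, which is $\psi''(0)$ times a discretized line integral of $\D\yh$ transverse to the cut. Since $z^m$ connects a positive core to a negative core and $\D\yh$ is the rotational gradient $\tfrac1{2\pi}(-x_2,x_1)/r^2$, this line integral is essentially $\tfrac{1}{2\pi}$ times the angle swept, but the sign convention (the bonds on which $z^m_b=1$ all point ``the same way'' across the cut) forces it to be the least favourable angle. Here I would pass from the sum to the genuine integral $\int \D\yh\cdot n\,\d\ell$ along a straight line at distance $d\geq \hop_2(C_0,C^-)\cdot(\text{const})$ from the origin, picking up an $O(d^{-1})$ correction for the discretization (again absorbed into $c_0\hop_2(C_0,C^-)^{-1}$), and then compute the resulting elementary integral explicitly. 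A straight segment of ``lattice unit'' width (corresponding to the single row of bonds crossed) at perpendicular distance $d$ from the origin, integrated against $\tfrac1{2\pi}\,(\text{unit normal})\cdot(-x_2,x_1)/|x|^2$, gives $\tfrac1{2\pi}\int_{-1/2}^{1/2}\frac{d}{d^2+s^2}\,\d s$-type expression; taking the worst case $d=\sqrt3/2$ (the closest a straight cut's defining line can come to the barycentre origin, given the triangular geometry — half the height of a unit triangle) produces $\tfrac1{2\pi}\cdot 2\arctan\!\big(\tfrac{1/2}{\sqrt3/2}\big)$ or, after the appropriate substitution, $\tfrac1\pi\arcsinh(2/\sqrt3)$, which is exactly the claimed constant. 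So the arithmetic I expect is: worst-case contribution $\geq -\psi''(0)\cdot\tfrac1\pi\arcsinh(2/\sqrt3)$, uniformly in the cut, with everything else an $O(\hop_2(C_0,C^-)^{-1})$ tail.

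The step I expect to be the main obstacle is the careful bookkeeping that turns ``the cut is a straight line'' into a clean statement about \emph{which} bonds carry $z^m_b=1$, with what orientation, and at what minimal Euclidean distance from the origin — i.e.\ relating the \emph{hopping} distance $\hop_2(C_0,C^-)$ appearing in the bound to the \emph{Euclidean} distance governing the decay of $\D\yh$ and $\alh_b$, and verifying that the constant $\arcsinh(2/\sqrt3)/\pi$ is genuinely the extremal value over all admissible straight-cut directions and offsets (one must check the six lattice directions and confirm no configuration gives a more negative leading term). The decay estimates are routine given Lemma \ref{th:decay_frc_yh} and \eqref{eq:alpha.hat.bnd}; the Taylor expansion is routine; but pinning down the exact geometric constant and the correct power of $\hop_2(C_0,C^-)$ in the remainder requires the precise triangular-lattice picture and the \DOCP-based distance bound of Corollary \ref{th:str_hop_min}.
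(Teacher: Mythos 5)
Your skeleton matches the paper's: reduce to straight cuts via Lemma \ref{th:straight_cuts}, Taylor-expand $\psi'(\alh_b)=\psi''(0)\alh_b+O(\d{b}^{-3})$, and absorb the cubic tail into the $c_0\,\hop_2(C_0,C^-)^{-1}$ term. But the heart of the lemma --- the quantitative lower bound on $\psi''(0)\sum_b z^m_b\alh_b$ and the identification of the constant $\arcsinh(2/\sqrt3)/\pi$ --- is where your proposal breaks down. Your extremal picture is a cut whose defining line passes at perpendicular distance $\sqrt3/2$ from the origin, contributing a unit-width transverse integral $\smfrac1{2\pi}\int_{-1/2}^{1/2} d\,(d^2+s^2)^{-1}\ds$; with $d=\sqrt3/2$ this equals $\smfrac1{2\pi}\cdot 2\arctan(1/\sqrt3)=1/6$, which is not $\arcsinh(2/\sqrt3)/\pi\approx 0.314$, and no ``appropriate substitution'' converts one into the other. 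The discrepancy is structural, not arithmetic: the signed, angle-type ($\arctan$) estimate is the right object only for the \emph{tangential} portions of a cut (Lemma \ref{lem:tan_cut} in the paper, proved by an alternating-sign telescoping of angle differences), whereas the constant in \eqref{eq:zm_psi_bnd} is produced by \emph{radial} cuts pointing away from the origin, along which the $\alh_b$ all have one sign, so there is no cancellation and one can only sum magnitudes $|\alh_b|\leq(2\pi\d{b})^{-1}$; this gives $\smfrac1\pi\int_0^{l/2}(|x|^2+s^2)^{-1/2}\ds=\smfrac1\pi\arcsinh\b(\smfrac{l/2}{|x|}\b)$, a quantity that grows logarithmically in the cut length $l$ and is bounded only because the \DOCP\ (via Corollary \ref{th:str_hop_min}, yielding the length constraint \eqref{eq:rad_cuts_max_length}) caps $l$ by roughly twice the hop distance $r$ of $C^-$ from the origin, while $|x|\geq\smfrac{\sqrt3}{2}(r-2/3)$; the limiting ratio $\smfrac{l/2}{|x|}\to 2/\sqrt3$ is exactly the source of the constant. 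Your proposal never confronts this worst case, so the claimed uniform bound is unsupported.

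Two smaller points. First, Corollary \ref{th:str_hop_min} does not say that the cut stays at hopping distance at least $\hop_2(C_0,C^-)$ from the origin; it says $\hop_2(C_0,C)\geq\hop_2(C^+_m,C)$ for cells $C$ touched by $z^m$, i.e.\ the cut is at least as far from the origin as from its own positive core, and this is what the paper actually uses both for the length bound above and to push the cubic error terms out to distance $\simeq\hop_2(C_0,C^-)$. Second, even granting the single-segment estimates, one must still combine the tangential ($\arctan$) and radial ($\arcsinh$) bounds over the at most two straight segments and show, by monotonicity and convexity in the segment lengths under the constraint $l_1+l_2\leq 2r$, that the purely radial configuration is extremal; this case analysis (Lemmas \ref{lem:tan_cut} and \ref{lem:rad_cut} together with the concluding optimisation in Appendix \ref{app:cut_proofs}) is exactly the part you flagged as the main obstacle but did not supply, and it is where the sharpness of the constant --- needed later so that $\arcsinh(2/\sqrt3)/\pi<1/3$ in the proof of Theorem \ref{th:lb:1} --- is established.
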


\begin{remark}
  We have no reason to believe that the estimate \eqref{eq:zm_psi_bnd}
  is sharp. It is sufficient for our purpose, due only to the fairly
  strong technical assumption $(\psi5)$ made in
  \S\ref{sec:Ediff_defn}.
\end{remark}

\medskip

The complete proof of Lemma \ref{lem:cut.lwr.bnd} is given in Appendix
\ref{app:cut_proofs}, but since this is a crucial part of our
analysis, we provide a brief sketch.

\begin{proof}[Sketch of the proof of Lemma \ref{lem:cut.lwr.bnd}]
  A crucial consequence of the \DOCP is \eqref{eq:str_hop_min}, since
  this inequality says that the dipoles described by each $z^m$
  satisfy
  \begin{align*}
    \text{either: } \quad & \hop_2(C_0,C^+)\leq\hop_2(C_0,C^-); \\
    \text{or: } \quad & \hop_2(C_0,C^-)<\hop_2(C_0,C^+), \quad
    \text{but} \quad \hop_2(C^+,C^-)\leq\hop_2(C_0,C^-).
  \end{align*}

  Since there is a positive dislocation core present in $C_0$, dipoles
  in the first category give a positive contribution to the sum, since
  the repulsive force between $C^+$ and $C_0$ dominates.  

  In the second case, the attractive forces between $C_0$ and $C^-$
  dominate, hence these dipoles give a negative contribution to the
  energy.  Requiring the straight cuts property derived in
  Lemma~\ref{th:straight_cuts} allows us to obtain estimates on the
  terms in the sums
  \begin{equation*}
    \sum_{b\in\Bonds}\psi'(\alh_b)z^m_b
  \end{equation*}
  using \eqref{eq:alpha.hat.bnd}, and estimating sums in terms of
  integrals that can be evaluated explicitly. Thus, we obtain explicit
  bounds for various different cases in terms of $\hop_2(C_0,C^-)$ and
  $\hop_2(C^+,C^-)$.  These estimates are logarithmic, and hence turn
  out to essentially depend upon the ratio
  \begin{equation*}
    \frac{\hop_2(C^+,C^-)}{\hop_2(C_0,C^-)} \leq 1;
  \end{equation*}
  the final result is the bound stated in \eqref{eq:zm_psi_bnd}.
\end{proof}


We have now collected all estimates required to obtain a coercivity
result. Although we state the result for general $u \in \Hsi$, we will
only require it later for $u$ satisfying the \DOCP.

\begin{theorem} 
  \label{th:lb:1}
  Let $u\in\Hsi$, then there exists $\alpha\in[D\yh+Du]$ and
  $\beta=\alpha-\alh$, such that
  \begin{equation}
    \E(u)\geq c_1\,\|\beta\|_2^2 + c_2\,\#\Cores[\beta] - c_3,
    \label{eq:lb:1}
  \end{equation}
  where $c_i > 0$ are independent of $u$.
\end{theorem}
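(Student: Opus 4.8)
The plan is to start from the elementary lower bound \eqref{eq:lb:0}, which for any fixed $\eps > 0$ and any $u\in\Hsi$ with $Du = \beta + z$ reads
\[
  \E(u) \geq \b(\smfrac12\psi''(0)-\eps\b)\|\beta\|_2^2 - \sum_{b\in\Bonds}\psi'(\alh_b)z_b + \<\del\E(0),u\> - C_\eps.
\]
The three remaining terms must be controlled. First I would reduce to a convenient representative: by \S\ref{sec:dips_branch} and \S\ref{sec:orig.shift} we may replace $u$ by $\tilde u$ with $\E(\tilde u) = \E(u)$ satisfying the \DOCP~\eqref{eq:DOCP} and the straight cuts property (Lemma \ref{th:straight_cuts}); since the conclusion \eqref{eq:lb:1} only asserts existence of \emph{some} $\alpha\in[D\yh+Du]$, it suffices to prove it for this representative (note $\beta$ is essentially unchanged, and $\|\beta\|_2$, $\#\Cores[\beta]$ are the relevant quantities). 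For the force term, Lemma \ref{th:bnd_delE_beta} gives $\<\del\E(0),u\> \geq -C\|\beta\|_2$ directly.

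The main work is the cut term $\sum_b \psi'(\alh_b)z_b$. Using the decomposition $z = \sum_{m=1}^M z^m$ from Lemma \ref{lem:z_decomp}, with $M = \#\Coresp[\beta]$, and applying Lemma \ref{lem:cut.lwr.bnd} to each $z^m$,
\[
  -\sum_{b\in\Bonds}\psi'(\alh_b)z_b \geq -\sum_{m=1}^M\bg(\psi''(0)\frac{\arcsinh(2/\sqrt3)}{\pi} + c_0\,\hop_2(C_0,C_m^-)^{-1}\bg).
\]
The constant $\psi''(0)\arcsinh(2/\sqrt3)/\pi$ is a fixed number $\kappa$; crucially one checks (numerically, using $\psi''(0)=\mu$ and the assumption $(\psi5)$-driven coefficient $\smfrac12\psi''(0)$ already extracted) that $\kappa < \smfrac13\cdot\smfrac12\psi''(0)$, i.e. the per-dipole debit from the cut is strictly smaller than the per-core elastic credit $\smfrac13(\smfrac12\psi''(0)-\eps)$ furnished by \eqref{eq:bnd_single_core_beta} once $\eps$ is chosen small enough. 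Since each $z^m$ joins a distinct $C_m^+\in\Coresp[\beta]$ to some $C_m^-\in\Coresm[\beta]$, and by \eqref{eq:bvec_cell_leq_1} each cell carries at most one core, the $M$ positive cores and $M$ negative cores involved are (after the Remark \ref{rem:alpha_ambig} adjustment) among the cells contributing to $\#\Cores[\beta] = 2M$; hence $M\,\kappa \leq \smfrac{\kappa}{2}\#\Cores[\beta]$, while $\smfrac13(\smfrac12\psi''(0)-\eps)\#\Cores[\beta] \leq \|\beta\|_2^2$ is available from \eqref{eq:bnd_single_core_beta} summed over all cores. Finally the tail $\sum_m c_0\,\hop_2(C_0,C_m^-)^{-1}$ is bounded: the cells $C_m^-$ are distinct (one negative core per cell), so $\hop_2(C_0,C_m^-)$ takes each positive-integer value at most $O(k)$ times, whence $\sum_m \hop_2(C_0,C_m^-)^{-1} \lesssim \sum_{k\geq 1} k\cdot k^{-1}$ restricted to $M$ terms $\lesssim M^{1/2}\lesssim \|\beta\|_2$; alternatively one simply absorbs $c_0\,\hop_2(C_0,C_m^-)^{-1}\leq c_0$ into the per-dipole budget, strengthening the strict inequality $\kappa + c_0 < \smfrac16\psi''(0)$ if needed, or more carefully splits off a fraction of $\|\beta\|_2^2$.

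Assembling: choose $\eps$ small, combine the elastic term $(\smfrac12\psi''(0)-\eps)\|\beta\|_2^2$ with the negative contributions, keeping a fraction $c_1\|\beta\|_2^2$ of the elastic term, using the surplus of the core-energy bound \eqref{eq:bnd_single_core_beta} to dominate $M\kappa$ and produce the positive multiple $c_2\#\Cores[\beta]$, using Young's inequality to absorb the linear terms $C\|\beta\|_2$ (from the force) and $\lesssim\|\beta\|_2$ (from the hopping tail) into $\smfrac{c_1}{2}\|\beta\|_2^2 + \text{const}$, and collecting all finite constants into $c_3$. The main obstacle I anticipate is verifying the quantitative inequality $\kappa < \smfrac16\psi''(0)$ — equivalently $\arcsinh(2/\sqrt3)/\pi < 1/6$, which holds since $\arcsinh(2/\sqrt3)\approx 0.89 < \pi/6\approx 0.524$... actually $0.89 > 0.524$, so the naive comparison fails and one genuinely needs the surplus in \eqref{eq:bnd_single_core_beta} together with the fact that two cores (one $+$, one $-$) are created per dipole, giving $2\times\smfrac13 = \smfrac23$ units of $\|\beta\|_2^2$ per dipole against $\kappa\approx 0.28\,\psi''(0)$ per dipole — this is the delicate bookkeeping that makes assumption $(\psi5)$ indispensable, and getting the constants to close is where care is required.
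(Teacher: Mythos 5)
Your proposal follows essentially the same route as the paper's proof: reduce to a \DOCP/straight-cuts representative (the bound being invariant under the shifts of \S\ref{sec:dips_branch}--\S\ref{sec:orig.shift}), combine \eqref{eq:lb:0} with Lemma \ref{th:bnd_delE_beta} and the per-dipole bound of Lemma \ref{lem:cut.lwr.bnd}, control the hopping tail, and close the bookkeeping via \eqref{eq:bnd_single_core_beta} with two cores per dipole — exactly the paper's comparison $\smfrac13 > \arcsinh(2/\sqrt{3})/\pi$. One correction: $\arcsinh(2/\sqrt{3})\approx 0.99$, not $0.89$, so the per-dipole debit is $\approx 0.314\,\psi''(0)$ against the credit $\smfrac13\psi''(0)$ — the inequality still closes, but more narrowly than your figures suggest; also, since $c_0$ is a fixed (not small) constant, the hopping tail must be handled by your $\sqrt{M}\lesssim\|\beta\|_2$ (or a $\delta$-splitting) argument rather than by absorbing $c_0$ into the per-dipole budget.
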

\medskip

\begin{remark}
We note that since we will show $\|\beta\|_2^2\gtrsim\#\Cores[\beta]$, we
could write \eqref{eq:lb:1} more concisely as
\begin{equation*}
  \E(u)\geq c_1\,\|\beta\|_2^2 - c_3. \qedhere
\end{equation*}
\end{remark}

\begin{proof}
  The bound is clearly invariant under the vertical shift and
  horizontal shift and rotation transformations we applied in
  \S\ref{sec:dips_branch} and \S\ref{sec:orig.shift}. Without
  loss of generality, we may therefore assume that $u$ satisfies the
  \DOCP~\eqref{eq:DOCP}. Moreover, according to Remark
  \ref{rem:alpha_ambig} we can choose $\alpha \in [D\yh+Du]$ in such a
  way that $\alpha_b = 0$ on {\em any} bond that lies on the
  intersection between two cores.

  Summing \eqref{eq:zm_psi_bnd} over $m$, we find that
  \begin{equation}
    \sum_{b\in\Bonds}z_b\psi'(\alh_b)\geq -\psi^{\prime\prime}(0)
    \frac{\arcsinh(2/\sqrt{3})}{2\pi}\#\Cores[\beta]
    -c_0\sum_{C\in\Coresm[\beta]}\hop_2(C,C_0)^{-1}.
    \label{eq:z_psi_bnd}
  \end{equation}
  for some $c_0>0$. 

The second group in \eqref{eq:z_psi_bnd} can be estimated using the
fact that each cell can contain no more that 1 dislocation core, so
that for any $\delta > 0$, there is a constant $C_\delta$ such that
\begin{equation*}
  \sum_{C\in\Coresm[\beta]}\hop_2(C,C_0)^{-1}\geq -\del\,\#\Cores[\beta]-C_\del.
\end{equation*}
Bringing together \eqref{eq:lb:0}, \eqref{eq:Frcs_bnd} and \eqref{eq:z_psi_bnd},
we have that for arbitrary $\eps>0$ and $\del>0$
\begin{equation*}
  \E(u)\geq \b(\smfrac12\psi^{\prime\prime}(0)-\eps)\|\beta\|_2^2
   -\bg(\psi^{\prime\prime}(0)\frac{\arcsinh(2/\sqrt3)}{2\pi}+\del\bg)\#\Cores[\alpha]
   -C_{\eps,\delta}.
\end{equation*}

Since we assumed that $\alpha_b = 0$ on any bond that is adjacent to
two cores (cf. Remark \ref{rem:alpha_ambig}), we obtain from
\eqref{eq:bnd_single_core_beta} that
\begin{displaymath}
  \|\beta\|_2^2 \geq \smfrac13 \#\Cores[\beta].
\end{displaymath}
Since
\begin{equation*}
  \frac13 > \frac{\arcsinh(2/\sqrt{3})}{\pi} \approx 0.314,
\end{equation*}
the result follows by taking $\eps$ and $\del$ small enough.
\end{proof}

\subsection{Existence of Minimisers of \texorpdfstring{$\E$}{E}}
\label{sec:existence}
With the coercivity result of Theorem \ref{th:lb:1} in place, we are
now in a position to apply the Direct Method and establish existence
of a minimiser of $\E$ in $\Hsi$.

Take a sequence $u^n\in\Hsi$ such that
\begin{equation*}
  \E(u^n)\to \inf_{u\in\Hsi}\E(u).
\end{equation*}
Referring back to \S\ref{sec:orig.shift}, we may assume that $u^n$
satisfies the \DOCP. Let $\alpha^n \in [D\yh+Du^n]$ satisfy the
condition of Theorem \ref{th:lb:1}, and $\beta^n := \alpha^n -
\alh$. Theorem \ref{th:lb:1} then implies that $\beta^n$ has a weakly
convergent subsequence in $\ell^2(\Bonds)$. In the next lemma, we also
obtain convergence of $Du^n$.

\begin{lemma}
  \label{th:coercivity}
  Suppose $u^n\in\Hsi$ is a minimising sequence for which each $u^n$
  satisfies the \DOCP~\eqref{eq:DOCP}. Then there exists a subsequence
  which converges weakly in $\Hsi$, and the corresponding $z^n$
  converges weakly in any $\ell^p(\Bonds)$ with $1<p<2$.
\end{lemma}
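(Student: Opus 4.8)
The plan is to split the claim into three pieces: (i) weak $\ell^2$ compactness of $\beta^n$ and of the corrector gradients on compact sets, (ii) an a priori $\ell^p$-bound on $z^n$ for $1<p<2$, and (iii) identification of a single limit $u\in\Hsi$ with $Du^n\wto Du$ and $z^n\wto z$ weakly in $\ell^p$. For (i), Theorem \ref{th:lb:1} together with $\E(u^n)\to\inf\E$ bounds $\|\beta^n\|_2$ uniformly, so after passing to a subsequence $\beta^n\wto\beta$ weakly in $\ell^2(\Bonds)$; the same theorem also gives a uniform bound on $\#\Cores[\beta^n]$, hence a uniform bound $M^n\le M$ on the number of dipoles in the decomposition \eqref{eq:z_decomp}.

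The main obstacle is (ii): controlling $z^n$. Since $\|Du^n\|_2$ is not controlled, one cannot hope for an $\ell^2$ bound on $z^n$; the point is that each $z^m$ is (Corollary \ref{th:short_dual_latt}) supported on a shortest dual path of length $\hop_2(C^+_m,C^-_m)$, and its contribution to any $\ell^p$ norm is therefore $\hop_2(C^+_m,C^-_m)^{1/p}$. To bound these lengths I would feed the lower bound back on itself: the logarithmic positive energy stored by a dipole of separation $\ell$, combined with the coercive term $c_1\|\beta^n\|_2^2$ being bounded, forces $\sum_m \log \hop_2(C^+_m,C^-_m) \lesssim 1$, hence each individual $\hop_2(C^+_m,C^-_m)$ is uniformly bounded, say by $L_0$. (Concretely: the elementary lower bound \eqref{eq:lb:0} together with Lemma \ref{lem:cut.lwr.bnd} and $\E(u^n)$ bounded gives an upper bound on $\sum_{b}\psi'(\alh_b)z^n_b$ from above as well as below, and the difference of the explicit-extension expression from the quadratic model is, bond by bond, controlled by $\d{b}^{-2}$ along the cut, which integrates to a $\log$ of the cut length.) Once each $\hop_2(C^+_m,C^-_m)\le L_0$ and $M^n\le M$, we get $\|z^n\|_p^p = \sum_m \hop_2(C^+_m,C^-_m) \le M L_0$ uniformly in $n$ for every $p\ge1$ — in particular for $1<p<2$. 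This is the step I expect to be delicate, because it requires combining the various case-by-case bounds of \S\ref{sec:z_psi_est} with the \DOCP\ geometry; if a clean self-improving argument is awkward, the fallback is that $\#\Cores[\beta^n]$ bounded already localises all cores to a bounded region after the origin shift, since the \DOCP\ forces $\hop_2(C_0,C^-_m)\gtrsim\hop_2(C^+_m,C^-_m)$ and a too-long dipole would push a core far from $C_0$ while still contributing positively — contradicting the energy bound.

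With (i) and (ii) in hand, (iii) is routine: $z^n$ bounded in $\ell^p(\Bonds)$ ($1<p<2$) has a weakly convergent subsequence $z^n\wto z$ in $\ell^p$, and since $Du^n=\beta^n+z^n$ with $\beta^n\wto\beta$ in $\ell^2\hookrightarrow \ell^p$, we get $Du^n\wto w:=\beta+z$ weakly in $\ell^p(\Bonds)$. It remains to see $w=Dv$ for a genuine $v\in\Hsi$ with $v(\xi_0)=0$: define $v(\xi):=\int_{\Gamma_\xi} w$ using the paths $\Gamma_\xi$ of \S\ref{sec:orig.shift}; weak $\ell^p$ convergence along each fixed finite path $\Gamma_\xi$ gives $u^n(\xi)\to v(\xi)$ pointwise, so $w$ is a gradient (it is curl-free as a limit of curl-free forms tested against finitely supported cycles) and $v(\xi_0)=0$. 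Finally $Dv=w=\beta+z$ with $\beta\in\ell^2$ and $z$ compactly supported being the weak limit — here one uses that each $z^n$ is supported in a fixed bounded set by (ii), so $z$ is compactly supported and integer-valued, whence $Dv-z=\beta\in\ell^2(\Bonds)$, i.e. $v\in\Hsi$. This gives the asserted weak $\Hsi$-convergence $u^n\wto u:=v$ and the weak $\ell^p$-convergence of $z^n$, completing the proof.
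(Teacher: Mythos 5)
Your step (ii) is where the proof actually lives, and as written it has a genuine gap. You assert that the energy bound forces $\sum_m \log \hop_2(C^+_m,C^-_m)\lesssim 1$, i.e.\ that each dipole of separation $\ell$ stores $\sim\log\ell$ of energy. Nothing in the paper's machinery delivers this: the coercivity estimate (Theorem \ref{th:lb:1}, built from Lemma \ref{lem:cut.lwr.bnd}) controls only $\|\beta^n\|_2^2$ and gives a contribution per dipole that is $O(1)$ \emph{independent of the cut length}, and the heuristic ``$\log$ of the dipole length'' is exactly what the paper flags as unavailable for generic configurations because of shielding and orientation effects. Your concrete version is also quantitatively wrong: along a cut at distance $r$ from the origin one has $|\psi'(\alh_b)|\lesssim \d{b}^{-1}$, so a cut of length $\ell$ contributes at most $\sim\log\bigl((r+\ell)/r\bigr)$, which is tiny when $r\gg\ell$ — so far-away cuts are invisible to this term, and no bound on their length follows. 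The fallback (``a too-long dipole pushes a core far from $C_0$, contradicting the energy bound'') is not an argument either: the \DOCP~only says $\hop_2(C_0,C^-_m)\geq\hop_2(C^+_m,C^-_m)$, and a core being far from $C_0$ contradicts nothing by itself. Making the ``bounded energy $\Rightarrow$ bounded cut length'' implication rigorous requires essentially the clustering argument you are trying to avoid.

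The second, related, gap is in (iii): even granting a uniform bound $L_0$ on individual cut lengths, it does not follow that ``each $z^n$ is supported in a fixed bounded set''. Whole dipoles (both cores together with their cut) can escape to infinity along a minimising sequence while $\|\beta^n\|_2$ stays bounded; this is precisely the scenario the paper must handle. Its proof does so by a concentration-compactness step: translate by $F^n_m$ to follow each escaping positive core, use the expanding convex-polygon/Jensen argument ($\|\beta^{n_k}\|_2^2\gtrsim N\log k$) to show every such group, and the group near the origin, has zero net Burgers vector, and then show a cut cannot connect cores in different divergent groups without violating the \DMCP; only this yields the uniform bound on $\|z^n\|_1$ (combined with $\|z^n\|_\infty\leq M$) that gives weak $\ell^p$ compactness, $1<p<2$. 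Your final identification of the limit (pointwise convergence along the paths $\Gamma_\xi$, $w=Dv$, $v(\xi_0)=0$, $v\in\Hsi$) is fine in spirit — and you do not actually need compact support of the limit $z$, since $\|z^n\|_1$ bounded already gives $z\in\ell^1\subset\ell^2$ — but without a correct proof that $\|z^n\|_1$ is bounded the lemma is not established.
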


\begin{proof}
  Corollary \ref{th:lb:1} implies that, selecting a subsequence of
  $u^n$ (not relabelled), we may assume that $\beta^n \rightharpoonup
  \beta$ weakly in $\ell^2(\Bonds)$ and that $M:=\#\Coresp[\beta^n]$
  is constant along the sequence.

  Let $Du^n = \beta^n + z^n$. Lemma \ref{lem:z_decomp} implies that
  each $z^n$ can be decomposed into
  \begin{equation*}
    z^n=\sum_{m=1}^Mz^{n,m}.
  \end{equation*}
  
  Let $B$ be any finite sum of positively-oriented cells. Since
  weak convergence in $\ell^2(\Bonds)$ implies pointwise convergence,
  it follows that
  \begin{equation*}
    \int_{\partial B} \beta^n = -\int_{\partial B} z^n \to N \in\Z
    \qquad \text{ as } n \to \infty.
  \end{equation*}


  We enumerate the cores $C^{n,m} \in \Cores[\alpha^n]$, $n \in \N$,
  $m = 1, \dots, 2M+1$. Let $\mathcal{M}_{\rm bdd}$ be the set of
  indices of cores that remain at a bounded distance from the origin,
  that is,
  \begin{displaymath}
    \mathcal{M}_{\rm bdd} := \B\{ m \in \{ 1,\dots,2M+1 \} \Bsep
    \sup_{n \in \N} \d{C^{n,m}} < +\infty  \B\}.
  \end{displaymath}
  Since the core centres $x^{C^{n,m}}$ with $m \in \mathcal{M}_{\rm
    bdd}$ can only take a finite number of positions, we can extract
  a further subsequence (not relabelled) so that they are constant.

  We therefore observe that
  \begin{displaymath}
    A := \sum\Cores[\beta] = \sum_{m \in \mathcal{M}_{\rm bdd}}
    C^{n,m}, \quad \text{ for all $n$.}
  \end{displaymath}

 Then, for any finite sum of cells $B$, $B
  \supset A$, we have
  \begin{equation*}
    \lim_{n\to\infty}\int_{\partial B} \beta^n =
    \lim_{n\to\infty}\int_{\partial A} \beta^n = \int_{\partial A}
    \beta =: N \in \Z.
  \end{equation*}
  
  We aim to show that $N = 0$. For all $n$ sufficiently large,
  applying Jensen's inequality implies
  \begin{equation*}
    \int_{\partial A} |\beta^n|^2 \geq \frac1{|\partial A|}
    \bg|\int_{\partial A} \beta^n \bg|^2=\frac{N}{|\partial A|}.
  \end{equation*}
  Let $B^0_0 \supset A$ be a finite sum of cells that form a convex
  lattice polygon, and let
  \begin{equation*}
    B^0_k=B^0_{k-1}\cup\{C:\overline{\partial C}\cap B^0_{k-1}\neq\emptyset\},
  \end{equation*}
  for $k \in \N$.  By considering all possible corners for a convex
  lattice polygon it is straightforward to show that $|\partial
  B^0_{k}|=|\partial B^0_{k-1}|+6$. Since $\lim_{n\to\infty}
  \int_{\partial B^0_k} \beta^n = N \in \Z$, there exist $n_k$ such
  that $\int_{\partial B^0_i} \beta^{n_k} = N$ for $i = 1, \dots, k$,
  and hence,
  \begin{equation*}
    \|\beta^{n_k}\|_2^2 \geq \sum_{i=0}^k\int_{\partial B^0_i}|\beta^{n_k}|^2
    \geq \sum_{i=0}^k \frac{N}{|\partial B^0_k|}
    \gtrsim N\log(k).
  \end{equation*}
  Since $\|\beta^{n_k}\|_2$ is bounded, we obtain that $N=0$. We can
  therefore conclude that, for any finite sum of positively oriented
  cells $b$, $B \supset A$,
  \begin{displaymath}
    \int_{\partial B}z^n \to 0 \quad \text{as}  \quad n\to\infty.
  \end{displaymath}

  Using a concentration compactness argument, we now show that we can
  ``group'' those cores which diverge into sums of cells with net
  Burgers vector zero.

  To that end, define the lattice translation operators
  $F^n_m:=F^{C^n_m}$ as in \eqref{eq:FC_defn}, where
  $C^n_m\in\Coresp[\beta^n]$ and $\beta^n_m:=\beta^n\circ F^n_m$. Note
  that
  \begin{equation*}
    \b\|\beta^n_m\b\|_2=\b\|\beta^n\b\|_2,
  \end{equation*}
  so $\beta^n_m$ is a bounded sequence for each $m$, and we can select
  a subsequence such that $\beta^n_m\wto \beta_m$ for some $\beta_m$
  in $\ell^2(\Bonds)$ and for each $m=1,\ldots,M$.  As above, it
  follows that there exists a finite sum of positively-oriented cells
  $A^m$ which contains $C_0$ and all dislocation cores of $\beta_m$,
  and is such that for any sum of positively-oriented cells $B$
  containing $A^m$ as a subsum,
  \begin{equation}
    \label{eq:0_burgers_loc}
    -\int_{\partial B} z^n \circ F^n_m = \int_{\partial B} \beta^n_m
    = 0   \quad \text{for $n$ sufficiently large.}
  \end{equation}

  Let $A^n_m:=(F^n_m)^{-1}A^m$, then we have shown that, for all $n$
  sufficiently large, all dislocation cores in $\beta^n$ lie within
  the set
  \begin{equation*}
    S^n:=\bigcup_{m=1}^M A^n_m.
  \end{equation*}

  We are now ready to establish that $\|z^n\|_1$ is bounded. Since
  each $z^n$ satisfies the \DMCP, $\|z^n\|_\infty \leq M$.  We
  therefore simply need to rule out the possibility that
  $\#\supp\{z^n\}\to\infty$ as $n\to\infty$ (i.e., that the branch cut
  lengths diverge). 

  Fix some $m \in \{1, \dots, M\}$ and suppose that the core $C^n_m
  \in \Coresp[\beta^n]$ is connected to $K^n_1 \in \Coresm[\beta^n]$
  via a ``cut'' $z^{n,m}$. We claim that $K^n_1 \in A^{n,m}$.

  If this were false and $K^n_1 \in A^n_{\ell_1}$ where $\ell_1 \neq
  m$, then there must be $L^n_1 \in \Coresp[\beta^n]$, $L^n_1 \in
  A^{n}_{\ell_1}$, which is connected to another core $K^n_2 \in
  \Coresm[\beta^n]$ outside of $A^n_{\ell_1}$ and outside
  $A^n_m$. Upon iterating this construction, we find a series of cores
  $K^n_1, L^n_1, K^n_2, L^n_2, \dots$, which must eventually
  repeat. 

  Let $L^n_1 := C^n_m$. We know, by construction of the groups
  $A^n_{\ell}$ that $\hop_2(L^n_i,K^n_{i+1}) \to \infty$ but
  $\hop_2(K^n_i,L^n_{i})$ is bounded as $n\to\infty$ for each
  $i$. This clearly contradicts the \DMCP~and hence the \DOCP.

  Hence, the claim that $K^n_1 \in A^{n,m}$ follows, and this
  immediately implies that $\|z^n\|_1$ is bounded.



  Since $\|z_n\|_1$ is bounded and $\ell^1$ compactly embeds into $\ell^p$
  for any $p>1$, it follows that we may extract a further subsequence
  which weakly converges in some $\ell^p(\Bonds)$ with $1<p<2$. This
  further imples that both $z^n$ and $\beta^n$ converge weakly in
  $\ell^2(\Bonds)$, and in particular that $u^n$ converges weakly in
  $\Hsi$, as required.
\end{proof}

We now complete the proof of our main result.

\begin{proof}[Proof of Theorem \ref{th:minim_E}]
  Invoking Lemma \ref{th:coercivity}, suppose that the minimising
  sequence $u^n \rightharpoonup u$ in $\Hsi$, which is a candidate
  minimiser for $\E$, and furthermore that the corresponding $z^n$
  converges weakly in $\ell^p(\Bonds)$ with $1<p<2$.
  It remains to show that $\liminf\E(u^n) \geq \E(u)$.
  Recall from \eqref{eq:explicit_E_extension} that
  \begin{equation*}
    \E(u)=\sum_{b\in\Bonds} \b[\psi(\alh_b+Du_b)-\psi(\alh_b)-\psi'(\alh_b)Du_b\b]
    +\<\del\E(0),u\>.
  \end{equation*}
  Since the second term is a bounded linear functional, it is weakly
  continuous.
  Using \eqref{eq:alpha.hat.bnd}, it may be shown that the linear functional $L$ defined to be
  \begin{equation*}
    L(z):=\sum_{b\in\Bonds}\psi'(\alh_b)z_b
  \end{equation*}
  is in $\b(\ell^p(\Bonds)\b)^*$ for all $p<2$, and is therefore also weakly continuous
  along the sequence.
  From \eqref{eq:psi_lb} we obtain that there exists $R_0,
  \lambda > 0$ such that, for $\d{b} \geq R_0$,
  \begin{equation*}
    \psi(\alh_b+Du^n_b)-\psi(\alh_b)-\psi'(\alh_b)\beta^n_b \geq \lambda |\beta^n_b|^2 \geq 0.
  \end{equation*}
  We can therefore apply Fatou's lemma to obtain
\begin{equation*}
  \liminf_{n\to\infty}\,\sum_{b\in\Bonds} \psi(\alh_b+Du^n_b)-\psi(\alh_b)
    -\psi'(\alh)\beta^n_b \geq \sum_{b\in\Bonds} \psi(\alh_b+Du_b)-\psi(\alh_b)
    -\psi'(\alh_b)\beta_b,
\end{equation*}
and thus, in combination with the weak continuity of $L$ and $\del\E(0)$ along
the minimising sequence,
\begin{equation*}
  \inf_{v\in\Hsi}\E(v) = \liminf_{n\to\infty}\E(u^n) \geq \E(u) \geq \inf_{v\in\Hsi}\E(v),
\end{equation*}
completing the proof of Theorem \ref{th:minim_E}.
\end{proof}

\section{Conclusion}

We have presented a model which has allowed us to analyse the
stability of screw dislocations under anti-plane deformation, and we
have obtained the surprising result that single screw dislocations
exist as globally stable states, i.e. they are global energy minimisers
among all finite energy displacements.  Further, we then showed
that configurations with arbitrarily many dislocations of arbitrary
sign are locally stable, as long as the dislocation cores are suitably
separated, but such configurations do not appear to be globally
stable.

Our work suggests two immediate directions for future study. Firstly,
our analysis relies crucially on the technical assumption $(\psi5)$
made in \S\ref{sec:Ediff_defn}. It would be of interest to understand
the extent to which this assumption could be weakened, but to do so
would require a more qualitative estimate than the quantitative one
made in Lemma \ref{lem:cut.lwr.bnd}, and would require a deeper
insight into the geometry of dipole interaction. Secondly, it would be
interesting to understand which other lattice defects are globally
stable states; this seems far from
clear to us at present.

\section*{Acknowledgements}

The authors would like to thank Florian Theil for originally posing the
problem of describing the existence of dislocations in atomistic problems,
Adriana Garroni for discussions regarding the literature on dislocations,
and Sylvia Serfaty for highlighting the work contained in \cite{BCL86} to us.

\appendix

\section{Analysis of Branchcuts: Proofs}
\label{app:cut_proofs}

In this appendix, we detail the proofs of various geometrical lemmas from
\S \ref{sec:dips_branch}.

\subsection{Proof of Lemma \ref{lem:z_decomp}}
\label{app:z_decomp}

We will prove this lemma algorithmically. First, if $z_b=0$ everywhere, then
the result is trivial. Next, we note that $\beta$ contains an even number of dislocation
cores, and $\#\Coresp[\beta]=\#\Coresm[\beta]<+\infty$ since $u\in\Hsi$.
We therefore enumerate $C^+_i\in\Coresp[\beta]$.

Put $C_{1,0}=C^+_1$. Since $C^+_1$ is a positive
dislocation core and $z_b$ is integer-valued, it follows that at least
one bond $b_1\in\partial C^+_1$ satisfies $z_b>0$. Let $C_{1,1}$ be
the cell such that $-b_1\in C_{1,1}$. There are now 2 possibilities:
either $C_{1,1}\in\Coresm[\beta]$, in which case we stop, or we can
find another bond $b_2\in\partial C_{1,1}$ such that
$z_b>0$. Iterating, we obtain a (possibly infinite) sequence of cells
$C_{1,j}$ and bonds $b_j$.

We now claim that no two bonds $b_j=b_k$ with $j\neq k$ in this sequence,
and consequently the sequence terminates; suppose the converse for
contradiction. Let $j$ and $k$ be indices such that $j<k$ and $k-j$ is minimal
over all pairs of indices such that $b_j=b_k$.
Define a polygonal curve $P$ passing
through the barycentres of the cells $x^{C_{1,j}},\ldots,x^{C_{1,k}}$. 
$P$ is a simple continuous closed curve, since the cells $C_{1,j},\ldots,C_{1,k-1}$
are distinct by definition, and $C_{1,j}=C_{1,k}$. Hence, $P$ partitions $\R^2$ into
a bounded set $\Omega$ (the interior of $P$) and an unbounded set, $\R^2\setminus \Omega$.
Define $\tilde{u} = u \mp \mathbbm{1}_\Omega$, taking the sign according to whether
$P$ traverses $\partial\Omega$ in an anticlockwise or clockwise direction respectively.

It is now straightforward to check that $D\tilde{u} = Du$ except on the bonds $b_i$.
For each of the bonds $b_i$,
\begin{equation*}
  D\tilde{u}_{b_i} = Du_{b_i} - 1 = \beta_{b_i} + z_{b_i}-1,
\end{equation*}
but since $z_{b_i}\geq1$, this contradicts the \DMCP, \eqref{eq:DMCP}. It follows
that the sequence $b_i$ contains no two identical bonds, and as
$z_b$ has compact support, the sequence must terminate at a negative dislocation core.

Define $z^1_b=\pm1$ if $\pm b\in\{b_1,\ldots,b_n\}$, and $z^1_b=0$ otherwise, and
then consider iterating the procedure described above starting at $C_i^+$, but using the criterion at each step
that each bond in the sequence must satisfy
\begin{equation*}
  z_b-\sum_{m=1}^{i-1} z^m_b>0.
\end{equation*}
This leads to a sequence 1-forms, $z^i$; the resulting 1-form
\begin{displaymath}
  z_b-\sum_{i=1}^N z^i_b
\end{displaymath}
must be identically zero. If not, then the same technique as used above shows that
that either the \DMCP~is violated, or else $\supp\{z\}$ is infinite, and hence
$u\notin\Hsi$.

To complete the proof of Lemma
\ref{lem:z_decomp}, we simply need to show that for 2 adjacent bonds in the
support of any $z^m$, $\partial b_i\cap\partial b_{i+1}$ is a single 0-cell. This
is clear, since by definition $b_i,-b_{i+1}\in\partial C_{m,i}$, $b_i\neq b_{i+1}$
and $|\partial C_{m,i}|=3$.

\subsection{Proof of Lemma \ref{th:straight_cuts}}
\label{app:straight_cuts}
We now show that we can always choose $z^m$ to be made up of 2 straight cuts.
Recall that Corollary \ref{th:short_dual_latt} states that
$\|z^m\|_1=\hop_2(C_m^+,C_m^-)$.

Next, define the 2-cell hop operators $H_i$ for $i=1,\ldots 6$ which act on
2-cells by taking a cell $C$ to the first cell `in the direction $a_i$', that
is, the positively-oriented cell which satisfies $C'\neq C$,
\begin{equation*}
  x^C+\smfrac{\sqrt3}{2}a_i\in C'.
\end{equation*}
It is straightforward to check this is well-defined; see Figure
\ref{fig:hopops}. We can represent paths in the dual lattice by words taken
from the alphabet of operators $\{H_1,\ldots, H_6\}$. In general however, the
representation is non-unique -- to see this, it is clear that for $C_2$ in
Figure~\ref{fig:hopops}, $H_2C_2=H_1C_2$. As with the vectors $a_i$, we define
\begin{equation*}
  H_{i+6m}:=H_i
\end{equation*}
for any $m\in\Z$.

We note the following properties of the operators $H_i$, which may be easily
checked:
\begin{enumerate}
  \item The orbit of the group of operators generated by $\{H_1^2,\ldots,H_6^2\}$
    acting on any cell $C$ is a lattice.
  \item The operators $H_i^2$ and $H_j$ commute for any $i$ and $j$.
  \item For any cell $C$ and any $i$, one of the following is true:
    $H_iC=H_{i+1}C$ or $H_iC=H_{i-1}C$.
  \item If $H_iC=H_jC$, then $H_iH_k^{2m}C=H_jH_k^{2m}C$, and if $H_iC\neq H_jC$, 
    then $H_iH_k^{2m}C\neq H_jH_k^{2m}C$.
  \item $H_{i+3}H_i=H_{i+3}H_i$ is the identity map for any $i$.
  \item $H_iC\neq H_{i\pm2}C$ for any cell $C$ and for any $i$.
\end{enumerate}
Setting $N:=\|z^m\|_1$, we claim that for any pair of cells $C_m^\pm$, it is possible
to write shortest paths in the dual lattice as a word of the form
\begin{equation}
  H_{i+1}^{N-k}H_i^k \label{eq:shortest_path_ops}
\end{equation}
for some $i$ and some $k\in\{1,\ldots,N\}$. By the definition of the hopping
operators, it is clear that such words represent a sequence of cells lying
on the lines $x^{C_m^-}+ta_i$ and $x^{C_m^+}-ta_{i+1}$.

We now prove the claim: first, we show that any shortest path must be able to be
written as a word made up of only 2 of the operators $H_i$ and $H_{i+1}$. Suppose
this is false, for contradiction. (6) implies that we may assume that it contains
a segment which may be written as
\begin{equation*}
  H_{i+2}H_i^mH_{i-2}\quad \text{or}\quad H_{i-2}H_i^mH_{i+2}
\end{equation*}
for some $i$ and some $m\in\N$. Since both cases are similar, we consider only the
first. If $C$ is the first cell in this subsequence, $H_{i-2}C\neq H_{i-3}C$,
or else $H_iH_{i-2}C=H_iH_{i-3}C=C$ by (5), and there exists a shorter path.
(3) therefore implies
\begin{equation}
  H_{i-2}C=H_{i-1}C. \label{eq:equivalent_ops}
\end{equation}
Invoking property (3) again, either
\begin{equation}
  H_iH_{i-2}C=H_iH_{i-1}C=H_{i-1}^2C\quad\text{or}\quad H_iH_{i-2}C=H_{i+1}H_{i-2}C=C.\label{eq:Hi+1sqrd}
\end{equation}
Once more, the second case results in a contradiction; hence repeatedly
using (2),
\begin{equation*}
  H_{i+2}H_i^{2k+1}H_{i-2}C=H_i^{2k}H_{i+2}H_iH_{i-2}C=H_i^{2k}H_{i+2}H_{i-1}^2C=H_i^{2k}H_{i-1}C,
\end{equation*}
implying a contradiction in the case where $m$ is odd.
In the case that $m$ is even, repeatedly using (2), \eqref{eq:equivalent_ops}
and (5), we have
\begin{equation*}
  H_{i+2}H_i^{2k}H_{i-2}C = H_i^{2k}H_{i+2}H_{i-2}C=H_i^{2k}H_{i+2}H_{i-1}C
    =H_i^{2k}C,
\end{equation*}
obtaining another contradiction; it follows that it is impossible that
every shortest path can be written as a word containing no more than 2 of the
operators $H_i$.

We next show that if the two operators are $H_i$ and $H_{i+2}$ for some $i$, we
may rewrite the word in terms of $H_i$ and $H_{i-1}$, or $H_{i+1}$ and $H_i$.
In \eqref{eq:Hi+1sqrd}, we showed that for any cell $C$ contained in a shortest
path $H_iH_{i-2}C=H_{i-1}^2C$. Suppose that a shortest path is represented as a 
product of $H_{i-2}$ and $H_i$. Then each pair $H_iH_{i-2}$ may be replaced by
$H_{i-1}^2$, and using (2) to permute each of these pairs to the right,
we eventually obtain one of
\begin{equation*}
  H_i^mH_{i-1}^{2n}\quad\text{or}\quad H_{i-2}^mH_iH_{i-1}^{2n}.
\end{equation*}
In the first case, the proof is complete. In the second case, it must be that
$H_iH_{i-1}^{2n}=H_{i-1}^{2n+1}$ or else $m=0$, since otherwise (3) implies
\begin{equation*}
  H_{i-2}^mH_iH_{i-1}^{2n} = H_{i-2}^mH_{i+1}H_{i-1}^{2n} = H_{i-2}^{m-1}H_{i-1}^{2n},
\end{equation*}
which is a contradiction. Hence we have proved the
claim, and in fact since we have obtained a shortest path in the form
\eqref{eq:shortest_path_ops}, the lemma is proven for this particular case.

Finally, we consider a general word made up of only the operators $H_i$ and
$H_{i+1}$. Now consider a word of the form $H_{i+1}H_i^mH_{i+1}$. If $m$ is
even, then we can generate a new word corresponding to a shortest path
$H_{i+1}^2H_i^m$. If $m$ is odd, then we can write a new shortest path as
\begin{equation*}
  H_{i+1}H_iH_{i+1}H_i^{m-1}.
\end{equation*}
But then using (3), this must be able to be written either as
\begin{equation*}
  H_{i+1}^3H_i^{m-1},\quad\text{or}\quad H_{i+2}H_{i-1}H_{i+2}H_i^{m-1},
\end{equation*}
where the second case results in a contradiction. It is now possible to
check that this implies the full conclusion, since by these arguments we
can always transform a general word composed of $H_i$ and $H_{i+1}$ in
one of the form \eqref{eq:shortest_path_ops}.

\begin{figure}
  \includegraphics[height=6cm]{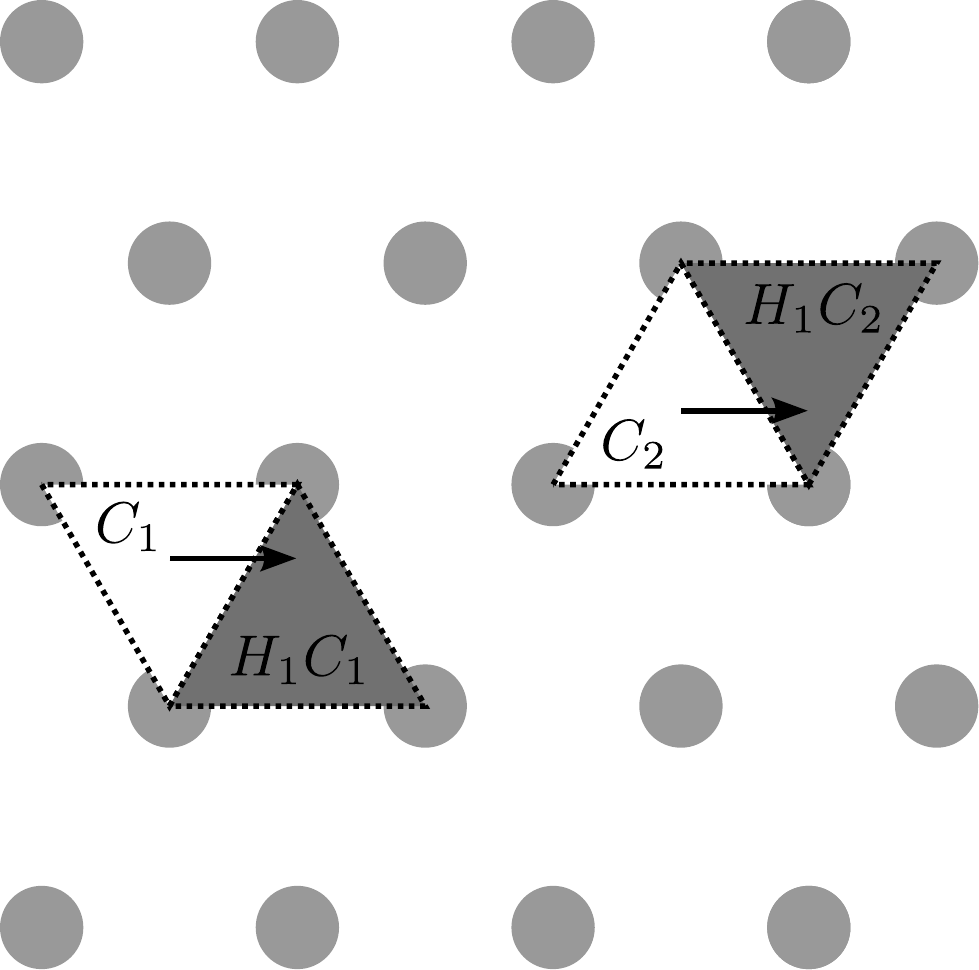}
  \caption{This illustration shows the operation of the operator $H_{a_1}$ on
  two representative cells, $C_1$ and $C_2$.}
  \label{fig:hopops}
\end{figure}

\subsection{Proof of Lemma \ref{lem:cut.lwr.bnd}}
To prove this lemma, we use Lemma \ref{th:straight_cuts} to assert that
$z$ should be made up of straight cuts. This will allow us to make estimates
for each straight segment, which depend upon on the orientation of each segment.
We divide the lattice into `sextants' by defining
\begin{equation*}
  \mathcal{S}^i:=\b\{x\in\R^2 \bsep \exists\,\lambda>0,\mu\geq0\text{ such that }
    x = \lambda a_i + \mu a_{i+1}\b\}.
\end{equation*}
and rings $\mathcal{R}^i$ by defining $\mathcal{R}^{-1}=\emptyset$, $\mathcal{R}^0:=C_0$,
and then
\begin{equation*}
  \mathcal{R}^i=\mathrm{clos}\,\bigcup\b\{C\in\Cells\setminus \b(\mathcal{R}^{i-1}\cup\mathcal{R}^{i-2}\b)\bsep \mathrm{clos}(C)\cap\mathrm{clos}(\mathcal{R}^{i-1})\neq\emptyset\b\}.
\end{equation*}
We will say a straight cut $z$ is:
\begin{enumerate}
  \item {\em tangential} if $\supp\{z\}\subseteq\mathcal{S}^i$ and the cut direction is
    $a_{i+2}$, or equivalently $\supp\{z\}\subseteq\mathcal{S}^i\cap\mathcal{R}^r$ for
    some $r$, and
  \item {\em radial} if $\supp\{z\}\subseteq \mathcal{S}^i$ and it has direction $a_i$ or
    $a_{i+1}$.
\end{enumerate}
It may be checked that for any straight cut, there exist $C^+,C^-\in\Cells$
such that
\begin{equation*}
  \int_{\partial C^\pm} z = \pm1.
\end{equation*}

We separate the full result into 2 further lemmas, which give precise estimates for each
of these classes of straight cuts, before combining them to complete the proof
for the general case. By Taylor expanding $\psi'(\alh_b)$ around $0$ we write
\begin{equation}
 \sum_{b\in\Bonds} \psi'(\alh_b)z_b = \sum_{b\in\Bonds} \psi''(0)\alh_b\,z_b
    +\smfrac16\psi^{(4)}(s_b)(\alh_b)^3z_b.\label{eq:tayl_exp_z_sum}
\end{equation}
We proceed to estimate the first terms in the summand, by estimating on the radial and
tangential straight segments of $z^m$ separately.

\begin{lemma}
\label{lem:tan_cut}
For a tangential cut $z^{\mathrm{tan}}$ with $\|z^\mathrm{tan}\|_1=\hop_2(C^+,C^-)=l$
on ring $\mathcal{R}^r$, we have the following estimate:
\begin{equation*}
  \sum_{b\in\Bonds} z^{\mathrm{tan}}_b \alh_b\geq
    -\frac{1}{2\pi}\arctan\bg(\frac{2\lfloor\min(l,2r-l)/2\rfloor+1}{(r-2/3)\sqrt{3}}\bg)
    -O(r^{-1}).
\end{equation*}
\end{lemma}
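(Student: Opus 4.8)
The plan is to reduce the sum to a computation of the angle subtended at the origin by the cut, and then estimate that angle by elementary planar trigonometry. By the definition of a straight cut, $z^{\mathrm{tan}}$ is supported on a connected run of bonds $b_0,\dots,b_{l-1}$ with $z^{\mathrm{tan}}_{b_j}=1$, all lying in $\mathcal{S}^i\cap\mathcal{R}^r$ and straddling a line $L=\{x^C+t\,a_{i+2}\}$, with $\int_{\partial C^+}z^{\mathrm{tan}}=1$ and $\int_{\partial C^-}z^{\mathrm{tan}}=-1$; by hypothesis this run is a shortest dual path from $C^-$ to $C^+$, of length $l=\hop_2(C^+,C^-)$. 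Hence $\sum_{b\in\Bonds}z^{\mathrm{tan}}_b\alh_b$ reduces to a sum of the form $\sum_{j}\alh_{b_j}$ over the bonds carrying $z^{\mathrm{tan}}=1$, and by Lemma~\ref{th:alhb_intDyh} each term is the line integral $\alh_{b_j}=\int_{b_j}\D\yh\cdot\mathrm{d}\ell$. Since $\D\yh=\tfrac{1}{2\pi}\nabla\arg$ is a closed $1$-form on $\R^2\setminus\{0\}$, and the whole run lies inside the sector $\mathcal{S}^i$ at positive distance from the origin --- a simply connected region --- a summation by parts along the run gives $\sum_{b\in\Bonds}z^{\mathrm{tan}}_b\alh_b=\tfrac{1}{2\pi}\Delta\theta$, where $\Delta\theta$ is the net change of polar angle between the two endpoints of the cut; because the run neither leaves $\mathcal{S}^i$ nor encircles the origin, $\Delta\theta$ is determined by the endpoints alone.

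It then remains to bound $\Delta\theta$ from below over all admissible tangential cuts. First I would locate the line $L$: a short computation with the barycentres of the cells of $\mathcal{R}^r$ shows $\mathrm{dist}(0,L)=(r-\tfrac23)\sqrt3+O(1)$, and that every vertex met by the run lies within $O(1)$ of $L$, so its polar angle equals $\arctan(s/d)+O(d^{-1})$ with $d=(r-\tfrac23)\sqrt3$ and $s$ the signed coordinate along $L$ measured from the foot of the perpendicular from the origin. The two endpoints of the cut have coordinates $s_\pm$ with $|s_+-s_-|$ a fixed quantity comparable to $l$, while $[s_-,s_+]$ is constrained to lie inside the $L$-trace of $\mathcal{R}^r\cap\mathcal{S}^i$, of half-length comparable to $r$. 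Since $t\mapsto\arctan(t/d)$ is odd and concave on $[0,\infty)$, the magnitude $|\Delta\theta|=|\arctan(s_+/d)-\arctan(s_-/d)|+O(d^{-1})$, with the separation of the endpoints held fixed, is largest when the interval with endpoints $s_\pm$ is pushed as close to the foot of the perpendicular as the ring permits: if the cut is short enough it can be centred there, while if it is long the extremiser is instead pressed against a corner of $\mathcal{R}^r\cap\mathcal{S}^i$, which is precisely the mechanism replacing $l$ by $2r-l$. A direct trigonometric estimate in these cases then produces a lower bound of the announced form.

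The main obstacle is not conceptual but is the somewhat fussy geometry needed to pin down the constants: (i) verifying $\mathrm{dist}(0,L)=(r-\tfrac23)\sqrt3+O(1)$ and controlling the $O(1)$; (ii) determining precisely which placement of a length-$l$ run on $\mathcal{R}^r\cap\mathcal{S}^i$ is extremal, which is where $\min(l,2r-l)$ enters; and (iii) parity bookkeeping --- in the extremal placement the endpoints sit at half-integer $L$-coordinates forced by the lattice, so twice the effective half-width is an odd integer, namely $2\lfloor\min(l,2r-l)/2\rfloor+1$. All remaining slack, the $O(1)$ in $d$ and the $O(d^{-1})=O(r^{-1})$ from the perpendicular wobble of the run, is then collected into the stated $O(r^{-1})$ remainder, and dividing the angle estimate by $2\pi$ yields
\[
  \sum_{b\in\Bonds} z^{\mathrm{tan}}_b\,\alh_b \;\geq\;
  -\frac{1}{2\pi}\arctan\!\bigg(\frac{2\lfloor\min(l,2r-l)/2\rfloor+1}{(r-2/3)\sqrt3}\bigg)-O(r^{-1}),
\]
as claimed.
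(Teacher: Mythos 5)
There is a genuine gap at the heart of your argument: the claimed identity $\sum_{b\in\Bonds} z^{\mathrm{tan}}_b\,\alh_b=\frac{1}{2\pi}\Delta\theta$, with $\Delta\theta$ the change of polar angle between the endpoints of the cut, is false. The support of a straight cut consists of the bonds crossing the dual line $L$, and $z^{\mathrm{tan}}$ assigns them all the \emph{same transverse} orientation (this is exactly what makes $Du=\beta+z$ jump by $+1$ each time $L$ is crossed in a fixed sense). As a set these bonds do form a connected zigzag, but when that zigzag is traversed head-to-tail it uses the bonds with \emph{alternating} sign relative to the $z$-orientation. Closedness of $\D\yh$ gives telescoping only for the head-to-tail chain, i.e.\ for $\int_\Gamma \alh$, which is a different quantity from $\sum_b z_b\alh_b$. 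The actual sum is an alternating series of bond-wise angle increments $\alh_{b_k}$ (the paper notes precisely that these alternate in sign along the cut), and its leading contribution does not come from the endpoints at all: it comes from the fact that the two rows of lattice points on either side of $L$ lie at slightly different distances from the origin ($|\xi^r|$ versus $|\xi^{r+1}|$ in the paper's notation), so consecutive terms fail to cancel by a small residual, and these residuals accumulate to a quantity of size $\arctan(l/r)$. Your own error accounting exposes the inconsistency: if the sum depended only on the endpoints up to an $O(r^{-1})$ ``perpendicular wobble'', then (since the second differences of $\arctan$ at a fixed radius telescope to boundary terms) the whole sum would be $O(r^{-1})$ and no main term of the stated size could arise.

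The mechanism behind $\min(l,2r-l)$ is also different from the one you propose. In the paper it comes from the reflection antisymmetry $\alh_b=-\alh_{b'}$ under the reflection in the bisector of the sector $\mathcal{S}^i$: the portions of a tangential cut on opposite sides of the bisector cancel, so a cut crossing the bisector is \emph{more} favourable, and the worst case is a cut lying entirely on one side, for which $l\le r$; crossing cuts are then handled by replacing $l$ with $2r-l$. In your extremal analysis, centring the cut at the foot of the perpendicular (which lies on the bisector) is declared the worst case, which is backwards for the true functional. Finally, a smaller quantitative slip: the distance from the origin to the ring $\mathcal{R}^r$ is $\approx\smfrac{\sqrt3}{2}(r-\smfrac23)$, not $(r-\smfrac23)\sqrt3$; the denominator $(r-2/3)\sqrt3$ in the stated bound is $2|\xi^r|$, paired with the numerator $2\lfloor\min(l,2r-l)/2\rfloor+1\approx l$, i.e.\ half the cut length over the distance. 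To repair the proof you would need to carry out the bond-by-bond computation with the alternating signs and the two radii, as in the paper's Appendix, rather than any endpoint-only reduction.
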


\begin{proof}
First, we appeal to symmetry. If $b=(\xi,\xi+a_j)\in\mathcal{S}_i$, then
applying the reflection
\begin{equation*}
  R=\frac13(a_i+a_{i+1})\otimes(a_i+a_{i+1})-a_{i+2}\otimes a_{i+2},
\end{equation*}
it is straightforward to check that if $b'=(R\xi,R\zeta)$, then
\begin{equation*}
  \alh_b=-\alh_{b'}.
\end{equation*}
This means that if a tangential cut crosses the line of symmetry
$\{t(a_i+a_{i+1})\sep t\in\R\}$, then some of the bond contributions cancel.
It follows that we need only consider the case where all bonds in the
support on $z^\mathrm{tan}$ lie on one side of this line of symmetry, since
this is the worst case. For such cuts, it may be checked that $l\leq r$.

Identifying $z^\mathrm{tan}$ with a dual lattice path, we may
enumerate $b_k\in\supp\{z^\mathrm{tan}\}$ `along the path', and elementary
geometry now shows that $\alh_{b_k}$ has alternating sign as $k$
increases. Letting $\xi^r:= \smfrac12(r-\smfrac23)(a_i+a_{i+1})$ if $i$ is odd,
$\xi^r:= \smfrac12(r-\smfrac13)(a_i+a_{i+1})$ if $i$ is even,
it is straightforward to show that each bond $b_k$ can be represented as one of
\begin{equation*}
  (\xi^r+sa_{i+2},\xi^r+sa_{i+2}+a_i)\quad\text{or}\quad(\xi^r+sa_{i+2},\xi^r+sa_{i+2}+a_{i+1})
\end{equation*}
where $s\in\b\{0,\smfrac{1}{2},\ldots,\smfrac{r-1}{2}\b\}$.
Elementary trigonometry now allows us to calculate that
\begin{align*}
  2\pi\alh_b\,z_b^\mathrm{tan} &= \pm\bg[\arctan\bg(\frac{s+1/2}{|\xi^{r+1}|}\bg)
    -\arctan\bg(\frac{s}{|\xi^r|}\bg)\bg]\\
  &\text{or}\,\pm\bg[\arctan\bg(\frac{s-1/2}{|\xi^{r+1}|}\bg)-\arctan\bg(\frac{s}{|\xi^r|}\bg)\bg]
\end{align*}
respectively for the cases above. We therefore have that
\begin{align*}
  2\pi\sum_b z^{\mathrm{tan}}_b\alh_b &= \sum_{t=1}^{\lfloor l/2\rfloor}
    \arctan\B(\smfrac{t+s_0+1/2}{|\xi^{r+1}|}\B)
    -2\arctan\B(\smfrac{t+s_0}{|\xi^r|}\B)
    +\arctan\B(\smfrac{t+s_0-1/2}{|\xi^{r+1}|}\B)+O(r^{-1}),\\
  \text{or }&=\sum_{t=1}^{\lfloor l/2\rfloor}
    \arctan\B(\smfrac{t+s_0+1/2}{|\xi^r|}\bg)
    -2\arctan\B(\smfrac{t+s_0}{|\xi^{r+1}|}\B)
    +\arctan\B(\smfrac{t+s_0-1/2}{|\xi^r|}\B)+O(r^{-1}),
\end{align*}
where $s_0\in\b\{0,\smfrac{1}{2},\ldots,\smfrac{r-1}{2}\b\}$, and 
the $O(r^{-1})$ term arises from the contribution of at most 2 bonds we have neglected, whose
contribution we estimate using \eqref{eq:alpha.hat.bnd}.
In the second case, the fact that $-\arctan$ is convex for positive arguments
implies that the sum is bounded below by $0$. In the first case, for all $t$ in the range of
summation,
\begin{equation*}
  \frac{t+s_0+1/2}{|\xi^{r+1}|}\geq\frac{t+s_0}{|\xi^r|}\quad\text{and trivially}\quad
    \frac{t+s_0}{|\xi^{r+1}|}\leq\frac{t+s_0}{|\xi^r|}.
\end{equation*}
Since $\arctan$ is increasing, we obtain the lower bound
\begin{align*}
  2\pi\sum_b z^{\mathrm{tan}}_b\alh_b&\geq \sum_{t=1}^{\lfloor l/2\rfloor}\bg\{\arctan\bg(\frac{t+s_0-1/2}{|\xi^r|}\bg)
    -\arctan\bg(\frac{t+s_0}{|\xi^r|}\bg)\bg\}+O(r^{-1})\\
  &\geq-\arctan\bg(\frac{\lfloor l/2\rfloor+1/2}{|\xi^r|}\bg)+O(r^{-1}),
\end{align*}
using the fact that arctan is positive and increasing for positive arguments.
Finally, note that in the case where $l>r$, i.e. the tangential cut crosses
the line of symmetry, and we obtain the same estimate but with $2r-l$ in place
of $l$ in the formula above, so using the definition of $\xi^r$ gives the result.
\end{proof}

\begin{lemma}
\label{lem:rad_cut}
For a radial cut $z^{\mathrm{rad}}$ such that $\|z^\mathrm{rad}\|_1=\hop_2(C^+,C^-)=l$,
either $|x^{C^+}|<|x^{C^-}|$ and
\begin{equation*}
  \sum_{b\in\Bonds} z_b^{\mathrm{rad}} \alh_b \geq 0,
\end{equation*}
or else $|x^{C^+}|>|x^{C^-}|$, and if $C^-\in\mathcal{R}^r$, then
\begin{equation*}
  \sum_{b\in\Bonds} z_b^{\mathrm{rad}} \alh_b\geq
    -\frac1\pi\arcsinh\bg(\frac{2\lceil l/2\rceil}{\sqrt3 (r-2/3)}\bg)\bg)-O(r^{-1}).
\end{equation*}
\end{lemma}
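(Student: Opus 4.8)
The plan is to follow Lemma~\ref{lem:tan_cut}, and indeed the radial case is simpler: a radial cut lies in a single sextant, so its bonds all contribute to $\sum_{b}z^{\mathrm{rad}}_b\alh_b$ with the same sign and no cancellation need be tracked. As in the proof of Lemma~\ref{th:alhb_intDyh}, choosing a branch cut for $\arg$ disjoint from $\supp\{z^{\mathrm{rad}}\}$ --- possible since that support is compact, connected and contained in one sextant, hence does not enclose the origin --- gives $2\pi\alh_{(\xi,\eta)}=\arg(\eta)-\arg(\xi)$ for every bond of the cut, i.e.\ $2\pi\alh_b$ is the angle subtended at the origin by $b$. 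Using the six-fold rotational symmetry of $\L$ and $\D\yh$ and the reflection identity $\alh_{(R\xi,R\zeta)}=-\alh_{(\xi,\zeta)}$ from Lemma~\ref{lem:tan_cut}, I would reduce to the defining line being $L=\{x^{C}+ta_1\sep t\in\R\}$ with $\supp\{z^{\mathrm{rad}}\}\subset\mathcal{S}^1$. Since $a_1$ is an edge direction of $\mathcal{S}^1$, the part of $L$ inside $\mathcal{S}^1$ is a half-line on which $t\mapsto|x^{C}+ta_1|$ is monotone; hence exactly one of $C^+,C^-$ is strictly nearer the origin, which is the dichotomy in the statement, and, writing $d:=\mathrm{dist}(0,L)$, the bonds $b_1,\dots,b_l$ of the cut (enumerated along $L$) satisfy $\d{b_k}\geq\sqrt{d^2+\tau_k^2}-O(1)$ for a nonnegative, $\tfrac12$-spaced, increasing sequence $\tau_k$ spanning an interval of length at most $\lceil l/2\rceil$.

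The next step is to determine the common sign of the $\alh_{b_k}$. Every bond of the cut meets $L$ and, by the definition of a straight cut, they all point either in the two directions $\{a_2,a_3\}$ or in the two directions $\{a_6,a_5\}$. Along the half-line $L\cap\mathcal{S}^1$ the outward radial direction lies in the $60^\circ$ wedge between $a_1$ and $a_2$, so $a_2,a_3$ are (weakly) counterclockwise of it while $a_6,a_5$ are clockwise of it; by the angle formula above this means every $\alh_{b_k}\geq0$ in the first case and every $\alh_{b_k}\leq0$ in the second. It then remains to check, from the orientation convention of Lemma~\ref{lem:z_decomp}(1b) --- which fixes the bonds carrying $z^{\mathrm{rad}}_b=+1$ through $b_0\in\partial C^-$, $-b_l\in\partial C^+$ --- that the first alternative occurs precisely when $|x^{C^+}|<|x^{C^-}|$ and the second precisely when $|x^{C^+}|>|x^{C^-}|$.

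Granting this, the first case of the lemma is immediate: all contributions are $\geq0$, hence $\sum_{b\in\Bonds}z^{\mathrm{rad}}_b\alh_b\geq0$. In the second case all contributions are $\leq0$; discarding the at most two bonds nearest the endpoints (each $O(r^{-1})$ by~\eqref{eq:alpha.hat.bnd}), using $|\alh_{b_k}|\leq\frac1{2\pi\d{b_k}}\leq\frac1{2\pi\sqrt{d^2+\tau_k^2}}\,(1+O(r^{-1}))$, comparing the resulting sum with $\int(d^2+t^2)^{-1/2}\,\mathrm dt=\arcsinh(t/d)$ and using the concavity of $\arcsinh$ on $[0,\infty)$, one gets
\begin{equation*}
  \sum_{b\in\Bonds}z^{\mathrm{rad}}_b\,\alh_b
    \;\geq\; -\frac1{2\pi}\sum_{k}\frac1{\sqrt{d^2+\tau_k^2}}-O(r^{-1})
    \;\geq\; -\frac1{\pi}\arcsinh\!\Bigl(\tfrac{\lceil l/2\rceil}{d}\Bigr)-O(r^{-1}),
\end{equation*}
where in the regime $l\lesssim r$ relevant to the application (via~\eqref{eq:str_hop_min}) these error terms are genuinely $O(r^{-1})$. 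Finally, in the worst case $C^-\in\mathcal{R}^r$ the inner end of the cut sits on the sextant boundary at distance $\approx r-\tfrac23$ from the origin, which forces $d\geq\tfrac{\sqrt3}{2}(r-\tfrac23)$; inserting this and using monotonicity of $\arcsinh$ turns the bound into $\sum_{b\in\Bonds}z^{\mathrm{rad}}_b\alh_b\geq-\frac1{\pi}\arcsinh\!\bigl(\tfrac{2\lceil l/2\rceil}{\sqrt3(r-2/3)}\bigr)-O(r^{-1})$, as claimed.

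The main obstacle is the geometric bookkeeping in the second paragraph: verifying that the orientation forced by Lemma~\ref{lem:z_decomp}(1b) makes all the $\alh_{b_k}$ nonnegative exactly when $|x^{C^+}|<|x^{C^-}|$, and, in the other case, locating the cut relative to the rings $\mathcal{R}^r$ so as to pin down the constant $d\geq\tfrac{\sqrt3}{2}(r-2/3)$. This is fiddly but elementary --- the analogue of the sign analysis in Lemma~\ref{lem:tan_cut} --- and it is what produces the precise constants $r-2/3$, $\sqrt3$ and $\lceil l/2\rceil$; the remaining estimates are the same integral comparisons already used there.
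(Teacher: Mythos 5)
Your overall strategy is the same as the paper's (single sign of the contributions by elementary geometry, then the bound $|\alh_b|\leq\frac{1}{2\pi\d{b}}$ and a sum--integral comparison producing an $\arcsinh$), but there is a genuine error in how you anchor the distance estimate in the second case. You measure everything from $d:=\dist(0,L)$, the distance from the origin to the \emph{line} of the cut, and then claim that $C^-\in\mathcal{R}^r$ ``forces $d\geq\smfrac{\sqrt3}{2}(r-2/3)$''. That is false: a radial cut in $\mathcal{S}^1$ may have direction $a_1$ with its inner cell $C^-$ lying next to the boundary ray $\{ta_1\sep t>0\}$, i.e.\ it may point (nearly) straight away from the origin. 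Then $|x^{C^-}|\approx\smfrac{\sqrt3}{2}r$ but $\dist(0,L)=O(1)$, and your bound degenerates to $-\frac1\pi\arcsinh\b(\lceil l/2\rceil/d\b)\sim-\frac1\pi\log l$, which is not the bound of the lemma. This is not a pathological configuration: it is exactly the worst case exploited later (a purely radial cut from ring $r$ of length up to $2r$, which produces the constant $\arcsinh(2/\sqrt3)/\pi$ in Lemma \ref{lem:cut.lwr.bnd}), so the gap cannot be waved away by the restriction $l\lesssim r$. The damage is done at your subadditivity step: the true sum is controlled by $\arcsinh\b(\frac{\tau_1+l/2}{d}\b)-\arcsinh\b(\frac{\tau_1}{d}\b)$, and by discarding $\tau_1$ (the offset of the inner endpoint from the foot of the perpendicular) you throw away precisely the information that keeps the bound small when $d$ is small but the cut starts far out.

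The paper avoids this by anchoring at the inner endpoint rather than at the line: with $x$ essentially $x^{C^-}\in\mathrm{clos}(\mathcal{S}^i)$, one has $a_i\cdot x\geq0$, hence $\d{b_{2n-1}},\d{b_{2n}}\geq|x+na_i|\geq\sqrt{|x|^2+n^2}$, and the comparison integral is $\int_0^{\lceil l/2\rceil}(|x|^2+s^2)^{-1/2}\ds=\arcsinh\b(\lceil l/2\rceil/|x|\b)$ with $|x|\geq\smfrac{\sqrt3}{2}(r-2/3)$ -- a bound on the distance of a cell in $\mathcal{R}^r$ from the origin, which, unlike your claim about $d$, is true for every direction of the cut. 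Your argument can be repaired within your own setup: instead of $\arcsinh(a+b)-\arcsinh(a)\leq\arcsinh(b)$, use $\int_{\tau_1}^{\tau_1+l/2}(d^2+t^2)^{-1/2}\dt\leq\int_0^{l/2}(d^2+\tau_1^2+s^2)^{-1/2}\ds=\arcsinh\b(\frac{l/2}{\sqrt{d^2+\tau_1^2}}\b)$, valid since $(\tau_1+s)^2\geq\tau_1^2+s^2$ for $\tau_1,s\geq0$; since $\sqrt{d^2+\tau_1^2}=|x|$ is the inner-endpoint distance, this recovers the paper's estimate. The remaining ingredients of your proposal (the sign dichotomy via the angle formula, the $O(r^{-1})$ treatment of the first bond or two, the $\smfrac12$-spacing of the distances along the cut) are consistent with the paper's argument.
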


\begin{proof}
First, we enumerate the bonds in $b_k\in\{b\in\Bonds\sep z^\mathrm{rad}_b>0\}$,
beginning with the bond for which $\d{b}$ is smallest, and proceeding
outwards along the cut. Elementary geometry shows that the terms
$z_{b_k}^{\mathrm{rad}}\alh_{b_k}$ are all positive in the case where
$b_1$ is in one of the directions $a_{i+1}$, $a_{i+2}$ or $a_{i+3}$,
which corresponds to having $|x^{C^+}|<|x^{C^-}|$; this immediately 
provides the first bound.

In the second case \eqref{eq:alpha.hat.bnd} implies
\begin{equation*}
  \alh_{b_k}\geq -\frac{1}{2\pi\d{b_k}}.
\end{equation*}
Without loss of generality, we assume the cut direction is $a_i$, the case with
direction $a_{i+1}$ being similar.
There are now two cases: $b_1$ is either in the direction $a_{i-1}$, or 
$a_{i-2}$. Further elementary geometry allows us to conclude that in the first
case, $\d{b_1}=|x|$ for some $x\in\mathcal{S}^i$, and in the second, $\d{b_1}>|x|$
with $x=x^{C^-}\in\mathrm{clos}(\mathcal{S}^i)$. In either case, $\d{b_2}$
satisfies the same lower bound as $\d{b_1}$, and further, we have that
\begin{equation*}
  \d{b_{2n-1}},\d{b_{2n}}\geq|x+na_i|.
\end{equation*}
Noting that as $x\in\mathrm{clos}(\mathcal{S}^i)$, it follows that $a_i\cdot x\geq0$
and
\begin{equation*}
 \frac{1}{|x+ta_i|} = \frac{1}{\sqrt{|x|^2+2t\,a_i\cdot x +t^2}}\leq\frac{1}{\sqrt{|x|^2+t^2}},
\end{equation*}
which is a decreasing function of $t$, so we estimate
\begin{equation*}
  \sum_{k=1}^l\alh_{b_k}\geq -\frac 1\pi\sum_{i=0}^{\lceil l/2\rceil}\frac{1}
  {|x+na_i|} \geq -\frac1\pi \bg(\frac{1}{|x|}+\int_0^{\lceil l/2 \rceil}
  \frac{1}{\sqrt{|x|^2+s^2}}\ds\bg).
\end{equation*}
Evaluating the integral, and noting further that $|x|\geq \smfrac{\sqrt{3}}{2}(r-2/3)$,
we obtain the conclusion.
\end{proof}

We now combine the estimates of Lemma \ref{lem:tan_cut} and Lemma
\ref{lem:rad_cut} to obtain an estimate for a general cut $z^m$ made up of two
straight segments. As we showed in Lemma \ref{th:straight_cuts}, each $z^m$ is
made up of at most 2 straight segments. It may be checked that each of these
segments is either purely radial, purely tangential, or changes from tangential
to radial part way along its length, with one bond which crosses
$\partial \mathcal{S}^i$. All possible cuts satisfying the \DMCP~and
made up of 2 straight segments can therefore be decomposed as either
\begin{enumerate}
  \item a tangential cut and 2 radial cuts or
  \item a tangential cut, a radial cut and tangential cut,
\end{enumerate}
where any of these segments could possibly have length 0, and neglecting
the extra bonds mentioned above for now.
Recalling the result of Corollary \ref{th:str_hop_min}, 
\begin{equation*}
  \hop_2(C_0,C_m^-)\geq\hop_2(C_0,C_m^+).
\end{equation*}
Consider the first case, letting the two radial segments be of
length $l_1$ and $l_2$ respectively, and the tangential segment
of length $l-l_1-l_2$. If $|x^{C_m^+}|<|x^{C_m^-}|$
and the radial cuts are of non-zero length, then the radial segments
have the trivial lower bound, by Lemma \ref{lem:rad_cut}. In the worst
case, where $l-l_1-l_2=r$, we have the bound
\begin{equation*}
  \sum_{b\in\Bonds}z^m_b\,\alh_b\geq-\frac{1}{2\pi}\arctan\bg(
    \frac{r+1}{(r-2/3)\sqrt{3}}\bg)
    -O(r^{-1})=-\frac{1}{12}-O(r^{-1}).
\end{equation*}
Otherwise, $|x^{C_m^+}|>|x^{C_m^-}|$, so applying Corollary
\ref{th:str_hop_min}, we have that for $C'\in\mathcal{R}^r$,
\begin{equation}
  2r\geq \hop_2(C_0,C')\geq\hop_2(C_m^+,C') = l_1+l_2.
  \label{eq:rad_cuts_max_length}
\end{equation}
Therefore, applying Lemma \ref{lem:tan_cut} and Lemma \ref{lem:rad_cut},
\begin{multline*}
  \sum_{b\in\Bonds}z_b\,\alh_b\geq -\frac{1}{2\pi}\Bg(\arctan\bg(
    \frac{2\lfloor (l-l_1-l_2)/2\rfloor+1}{(r-2/3)\sqrt{3}}\bg)
    +2\,\arcsinh\bg(\frac{2\lceil l_1/2\rceil}{\sqrt3 (r-2/3)}\bg)\\
    +2\,\arcsinh\bg(\frac{2\lceil l_2/2\rceil}{\sqrt3 (r+\lfloor l_1/2\rfloor-2/3)}\bg)+O(r^{-1})\Bg).
\end{multline*}
By ignoring the floor functions, it is possible to check that under the bound
\eqref{eq:rad_cuts_max_length}, the function in parentheses is increasing in
both $l_1$ and $l_2$ if $r$ is suitably large;
we therefore have that the maximum must occur when $l=l_1+l_2$. Hence putting
$l_2=l-l_1$, we have
\begin{equation*}
  \sum_{b\in\Bonds}z_b\,\alh_b\geq -\frac{1}{\pi}\Bg( 
    \arcsinh\bg(\frac{2\lceil l_1/2\rceil}{\sqrt3 (r-2/3)}\bg)
    +\arcsinh\bg(\frac{2\lceil (l-l_1)/2\rceil}
      {\sqrt3 (r+\lfloor l_1/2\rfloor-2/3)}\bg)+O(r^{-1})\Bg).
\end{equation*}
Dropping the floor and ceiling functions, this estimate is convex in $l_1$.
The worst cases are therefore $l_1=0$ or $l_1=l$ and $2\lceil l/2\rceil =2r+1$,
giving the value
\begin{equation*}
  \sum_{b\in\Bonds} z_b\,\alh_b \geq -\frac1\pi\arcsinh\bg(
    \frac{2r+1}{\sqrt{3}(r-2/3)}\bg)+O(r^{-1})
    =-\frac{\arcsinh\b(2/\sqrt{3}\b)}{\pi}+O(r^{-1}).
\end{equation*}
In the case where we have tangential, radial and tangential segments, similar
arguments show that, once more, the worst possible bound arises in the case
where the cut is purely radial, giving the same lower bound.

We now return to the lower order terms in \eqref{eq:tayl_exp_z_sum}. Then
by crudely estimating
\begin{equation*}
  \bg|\sum_{b\in\Bonds} \smfrac16\psi^{(4)}(s_b)(\alh_b)^3z^m_b\bg|
    \lesssim \sum_{\d{b}\geq r} \d{b}^{-3} \lesssim \frac1r
\end{equation*}
and noting that the worst case bounds always have occur when
$\hop_2(C_0,C_m^-)\simeq r$, we have
\begin{equation*}
  \sum_{b\in\Bonds}z^m_b\psi^\prime(\alh_b)\geq -\psi^{\prime\prime}(0)
    \frac{\arcsinh\b(2/\sqrt{3}\b)}{\pi} - c_0\,\hop_2(C_0,C^-_m)^{-1}.
\end{equation*}

\bibliographystyle{plain}
\bibliography{qc}
\end{document}